\documentclass[final]{siamltex}
\usepackage{amsmath,amsfonts,amssymb}
\newtheorem{remark}{Remark}
\newtheorem{assumption}{Assumption}



\usepackage{showkeys}
\usepackage{showlabels}
\usepackage{placeins}

\usepackage{latexsym}
\usepackage{epsfig,overpic}
\usepackage{todonotes}
\usepackage{booktabs}
\usepackage{multirow}
\usepackage{hyperref}
\hypersetup{
  colorlinks= true,
  allcolors = {green!50!black},
  urlcolor  = {red!50!black},
}

\usepackage{siunitx}
\sisetup{scientific-notation=true,
  round-precision=1,
  round-mode=places,
  retain-zero-exponent=true}
\sisetup{exponent-product = {\!\cdot}, output-product = \cdot}

\DeclareSIUnit[number-unit-product = {}]\Q{~}
\DeclareSIPrefix\kilo{K}{3}
\DeclareSIPrefix\mega{M}{6}
\DeclareSIPrefix\giga{G}{9}
\DeclareSIPrefix\terra{T}{12}
\newcommand{\numQ}[1]{\SI[zero-decimal-to-integer,scientific-notation = engineering,round-precision=0,exponent-to-prefix = true]{#1}{\Q}\!\!\!}
\newcommand{\numQQ}[1]{\SI[zero-decimal-to-integer,scientific-notation = engineering,round-precision=1,exponent-to-prefix = true]{#1}{\Q}\!\!\!}
\newcommand{\numt}[1]{\num[round-precision=1,round-mode=places, scientific-notation=false]{#1}s}

\usepackage{pgfplotstable, pgfplots}

\usepackage{tikz}
\usetikzlibrary{shapes,arrows,snakes,calendar,matrix,backgrounds,folding,calc,positioning,spy,decorations.pathreplacing}

\newif\ifarxiv
\arxivtrue

\newcommand{\divergence}{\operatorname{div}}
\newcommand{\id}{\operatorname{id}}
\newcommand{\curl}{\operatorname{curl}}
\newcommand{\normal}{n}

\newcommand{\bilinearform}[1]{\mathcal{#1}_h}

\newcommand{\mesh}{\mathcal{T}_h}
\newcommand{\facets}{\mathcal{F}_h}
\newcommand{\facetsint}{\mathcal{F}_h^{\text{int}}}
\newcommand{\facetsext}{\mathcal{F}_h^{\text{ext}}}
\newcommand{\sumoverallelements}{\sum_{T \in \mathcal{T}_h}}

\newcommand{\sumoverallinnerfacets}{\sum_{F \in \facetsint}}
\newcommand{\sumoverallouterfacets}{\sum_{F \in \facetsext}}
\newcommand{\jumpleft}{[\![}
\newcommand{\jumpright}{]\!]}
\newcommand{\facetjump}[1]{\jumpleft #1 \jumpright_F}
\newcommand{\jumpleftDG}{[}
\newcommand{\jumprightDG}{]}
\newcommand{\jumpDG}[1]{\jumpleftDG #1 \jumprightDG_F}

\newcommand{\averageleftDG}{\{}
\newcommand{\averagerightDG}{\}_F}
\newcommand{\averageDG}[1]{\averageleftDG #1 \averagerightDG}
\newcommand{\stab}{\lambda}
\newcommand{\brokenHnormleft}{|\!|\!|}
\newcommand{\brokenHnormright}{|\!|\!|_1}
\newcommand{\brokenHnorm}[1]{\brokenHnormleft #1 \brokenHnormright}
\newcommand{\localnormleft}{|\!|\!|}
\newcommand{\localnormright}{|\!|\!|_T}
\newcommand{\localnorm}[1]{\localnormleft #1 \localnormright}
\newcommand{\localnormIIleft}{|\!|\!|}
\newcommand{\localnormIIright}{|\!|\!|_{T,\ast}}
\newcommand{\localnormII}[1]{\localnormIIleft #1 \localnormIIright}
\newcommand{\brokenHnormIIleft}{|\!|\!|}
\newcommand{\brokenHnormIIright}{|\!|\!|_{1,*}}
\newcommand{\brokenHnormII}[1]{\brokenHnormIIleft #1 \brokenHnormIIright}

\newcommand{\proj}{\Pi}
\newcommand{\facetproj}{\proj_F}

\newcommand{\Recon}{\mathcal{R}_{\VelSymb}}
\newcommand{\ReconHdiv}{\mathcal{R}_{\HdivSymb}}

\newcommand{\rr}{\mathbb{R}}

\newcommand{\HdivSymb}{W}
\newcommand{\HcurlSymb}{S}

\newcommand{\HdivSymbHODC}{W^-}
\newcommand{\FacetSymb}{F}
\newcommand{\PressureSymb}{Q}
\newcommand{\VelSymb}{U}

\newcommand{\HdivVar}{u_{\mathcal{T}}}
\newcommand{\HdivVarT}{u_{T}}
\newcommand{\HdivVarTT}{u_{T'}}
\newcommand{\FacetVar}{u_{\mathcal{F}}}
\newcommand{\FacetVarF}{u_{F}}

\newcommand{\PressureVar}{p_h}
\newcommand{\PressureVarEx}{p}
\newcommand{\ForceVar}{f}
\newcommand{\VelVar}{u_h}
\newcommand{\VelVarEx}{{u}}
\newcommand{\HdivVarEx}{{u}}

\newcommand{\HdivTest}{v_{\mathcal{T}}}
\newcommand{\HdivTestT}{v_{T}}

\newcommand{\FacetTest}{v_{\mathcal{F}}}
\newcommand{\FacetTestF}{v_{F}}

\newcommand{\PressureTest}{q_h}
\newcommand{\PressureTestEx}{q}
\newcommand{\VelTest}{v_h}
\newcommand{\DiscTest}{v_h}
\newcommand{\VelTestEx}{v}

\newcommand{\VelTestB}{w_h}
\newcommand{\PressureTestB}{r_h}

\newcommand{\HdivSpace}{\HdivSymb_h}
\newcommand{\HdivSpaceHODC}{{\HdivSymbHODC_h}}
\newcommand{\HdivSpaceref}{\widehat{\HdivSymb}_h}
\newcommand{\FacetSpace}{\FacetSymb_h}

\newcommand{\PressureSpace}{\PressureSymb_h}
\newcommand{\VelSpace}{\VelSymb_h}

\newcommand{\HcurlSpace}{\HcurlSymb_h}

\newcommand{\ConDiff}{\mathcal{E}^c}

\newcommand{\jacobi}{p}
\newcommand{\intjacobi}{{\hat{p}}}

\newcommand{\hull}{{\textrm{span}}}

\newcommand{\partialsym}[1]{\partial_{#1}}

\newcommand{\kk}{[$\texttt{n}^{\hphantom{\!\ast\!}}|\texttt{t}^{\hphantom{\!\ast\!}}$]}
\newcommand{\kkmo}{[$\texttt{n}^{\hphantom{\!\ast\!}}|\texttt{t}^{\!\ast\!}$]}
\newcommand{\kmokmo}{[$\texttt{n}^{{\!\ast\!}}|\texttt{t}^{{\!\ast\!}}$]}

\newcommand{\Ureg}{U_{\textrm{reg}}}

\allowdisplaybreaks

\title{Hybrid Discontinuous Galerkin methods with relaxed H(div)-conformity for incompressible flows. Part I\thanks
  {
\ifarxiv%
Revision submitted to SINUM. Appendix part of this report is only in arXiv version.
\else%
Revision submitted to the editors on: \today.
\fi%
    }
}
\author{
  Philip L. Lederer\thanks{Institute for Analysis and Scientific Computing, TU Wien, Wien, Austria; email: {\tt \{philip.lederer,joachim.schoeberl\}@tuwien.ac.at}}
  \and
  Christoph Lehrenfeld\thanks{Institute for Numerical and Applied Mathematics, University of G\"ottingen, G\"ottingen, Germany; email: {\tt lehrenfeld@math.uni-goettingen.de}}
  \and
  Joachim Sch{\"o}berl\footnotemark[1] 
}

\begin{document}
\maketitle
\begin{abstract}
We propose a new discretization method for the Stokes equations.
The method is an improved version of the method recently presented in [C. Lehrenfeld, J. Sch\"oberl, \emph{Comp. Meth. Appl. Mech. Eng.}, 361 (2016)]
which is based on an $H(\divergence)$-conforming finite element space and a Hybrid Discontinuous Galerkin (HDG) formulation of the viscous forces.
$H(\divergence)$-conformity results in favourable properties such as pointwise divergence free solutions and pressure-robustness.
However, for the approximation of the velocity with a polynomial degree $k$ it requires unknowns of degree $k$ on every facet of the mesh.
In view of the superconvergence property of other HDG methods, where only unknowns of polynomial degree $k-1$ on the facets are required to obtain an accurate polynomial approximation of order $k$ (possibly after a local post-processing) this is sub-optimal.
The key idea in this paper is to slightly relax the $H(\divergence)$-conformity so that only unknowns of polynomial degree $k-1$ are involved for normal-continuity. This allows for optimality of the method also in the sense of superconvergent HDG methods.
In order not to loose the benefits of $H(\divergence)$-conformity we introduce a cheap reconstruction operator which restores pressure-robustness and pointwise divergence free solutions and suits well to the finite element space with relaxed $H(\divergence)$-conformity. We present this new method, carry out a thorough $h$-version error analysis and demonstrate the performance of the method on numerical examples.
\end{abstract}

\begin{keywords}
  Stokes equations,
  Hybrid Discontinuous Galerkin methods,
  $H(\divergence)$-conforming finite elements,
  pressure robustness
\end{keywords}
\begin{AMS} 
  35Q30, 65N12, 65N22, 65N30
\end{AMS}

\section{Introduction and structure of the paper}
In the recent paper \cite{LS_CMAME_2016} a new finite element discretization method for the incompressible Navier-Stokes equations has been presented. It is based on an efficient time integration scheme which allows to split the Navier-Stokes problem into two simpler subproblems, a Stokes-type problem and a hyperbolic transport problem. For both subproblems specifically taylored methods have been applied.
In this work we turn our attention to the numerical treatment of the Stokes problem in a velocity-pressure formulation:
\begin{equation} \label{eq:navierstokes}
  \left\{
    \begin{array}{r r l @{\hspace*{0.1cm}} c l l}
      - \nu \Delta \VelVarEx & + \nabla \PressureVarEx
      & =
      & \ForceVar \!
      & \mbox{ in } \Omega, \\
      \divergence ( \VelVarEx) &
      & =
      & 0
      & \mbox{ in } \Omega, 
    \end{array} \right.
\end{equation}
with boundary conditions $\VelVarEx=0$ on $\Gamma_D \subset \partial \Omega$.
Here, $\nu = const$ is the kinematic viscosity, ${\VelVarEx}$ the velocity, $\PressureVarEx$ the pressure, and $\ForceVar$ is an external body force.
At the heart of the present contribution lies the modification of the previously considered $H(\divergence)$-conforming hybrid discontinuous Galerkin finite element method resulting in a reduction of the gloablly coupled unknowns occuring in the linear systems. \newline
Beside other purposes, the aim of abandoning $H^1$-conformity and consider an $H(\divergence)$-conforming discontinuous Galerkin (DG) method instead is to find a suitable approximation of the incompressibility constraint. This leads to a discontinuity only for the tangential part of the velocity and was introduced in \cite{cockburn2005locally, cockburn2007note} and for the Brinkman Problem in \cite{stenberg2011}. Nevertheless DG methods are considered to be much more expensive compared to a continuos Galerkin (CG) method when it comes to solving linear systems. This is mainly due to the dramatically increased numbers of globally coupling degrees of freedom.
An approach to overcome this drawback is the concept of \emph{hybridization}.
In a \emph{hybrid} or \emph{hybridized} methods, discretizations are separated into two parts: local problems (with only local approximation spaces) and transmission conditions that are formulated in terms of facet unknowns on the skeleton of the mesh. 
Hence, even more unknowns are introduced on the skeleton but nevertheless this leads to two major benefits. First, the coupling between elements is reduced and secondly, the structure of the coupling allows for static condensation of the element unknowns, see \cite{cockburn2009unified,LS_CMAME_2016,lehrenfeld2010hybrid}. Hybrid discontinuous Galerkin (HDG) methods are also of interest due to their capability of providing a {\it superconvergent} post processing. This step allows to reconstruct interior element unknowns resulting in an accuracy of order $k+1$ in the volume when we use polynomials of order $k$ on the skeleton.
In \cite{LS_CMAME_2016} a similar result was achieved by the usage of {\it projected jumps}. The idea is to use only polynomials of order $k-1$ on the skeleton and introduce a projection on the occurring jump terms. This reduction of the polynomial order has no impact on the accuracy of the method, thus can be seen as an approach to obtain superconvergence. Still, this technique only results in a reduced coupling of the tangential part. This work considers an approach how to reduce also the coupling with respect to the normal component. Where an $H(\divergence)$-conforming method demands normal continuity of the velocity, i.e.  $\jumpDG{ \HdivVar\! \cdot\! n } = 0$, we only impose a relaxed continuity $\facetproj^{k-1} \jumpDG{ \HdivVar\! \cdot\! n } = 0$. This allows facet jumps of the normal components in the highest order modes resulting in discrete divergence free approximations with optimal order of convergence.

\noindent A weak treatment of the incompressibility constraint using mixed finite element methods results in velocity error estimates which dependent on the pressure, i.e. they are not pressure robust, \cite{JLMNR:sirev}. This may result in a bad velocity approximation when the irrotational part of the right hand side is dominant. Under this perspective one also speaks of poor mass conservation, since large velocity errors are accompanied by large divergence errors. In \cite{linke2014role} the problem is analyzed and a reconstruction is presented to retrieve pressure robustness. This reconstruction maps discrete divergence free testfunctions onto exactly divergence free test functions and is applied only on the right hand side. This approach was performed for several elements using discontinuous pressures \cite{linke2014role, blms:2015, Linke:2012, JLMNR:sirev, lmt:2016} and recently continuous pressure elements \cite{LedererSchoeberl2016, lederer:2016}.
For $H(\divergence)$-conforming finite element methods discrete incompressibility implies pointwise incompressibility and render the methods automatically pressure robust.
These methods have been investigated for incompressible flow problems in a.o. \cite{cockburn2007note,kanschat2008energy,kanschat2015multigrid,LS_CMAME_2016,fu2016parameter,schroeder2017divergence}.
A relaxed $H(\divergence)$-conformity results in only discretely divergence free approximations and thus leads to a lack of pressure robustness. For this we present a simple reconstruction and provide numerical examples revealing the benefits of the modified Stokes discretization.

\noindent We note that we only treat an $h$-version error analysis here. The $hp$-version error analysis of the method will be treated in a forthcoming paper. Furthermore we want to refer to section 1.2 in \cite{LS_CMAME_2016} for a more detailed summery of works considering a discontinuous Galerkin approach.

\paragraph{Main contributions and structure of the paper}
In section \ref{hdgnse:stokes} we introduce new finite element methods. We start with a \emph{basic} method using an HDG formulation with relaxed $H(\divergence)$-conformity.
Further, we introduce a reconstruction operator to restore solenoidal velocity fields. Based on this operator we then define a \emph{modified} pressure robust discretization.
The relaxed $H(\divergence)$-conforming finite element space and the use of the reconstruction operator represent the first main contribution of this manuscript. Together, both allow to improve the computational efficiency of the underlying method while recovering the desirable properties of corresponding $H(\divergence)$-conforming formulations.
To the best of our knowledge, the method is the first HDG method for the Stokes equations that provides solenoidal solutions, pressure robustness and superconvergence properties simultaneously. 

The second main contribution of this paper is the presentation and characterization of a basis for the
(relaxed) $H(\divergence)$-conforming finite element space. We devote section~\ref{hdgnse:fesdetails} solely to aspects of these spaces, their bases and a reconstruction operator to map relaxed $H(\divergence)$-conforming functions to $H(\divergence)$-conforming functions.
The construction of the basis is non-standard and the efficient realization of the reconstruction operator heavily relies on the choice of basis functions. 
This section is the most technical part of the paper. We note however that the remaing part of the work can be understood without it.
A thorough a priori error analysis of the proposed methods, which represents the third main contribution of this paper, is carried out in section~\ref{sec:errorana}.
We conclude our work with numerical examples in section \ref{sec:numex} which confirm the theoretical findings.
\section{Relaxed $H(\divergence)$-conforming HDG methods for the Stokes problem} \label{hdgnse:stokes}
\subsection{ Preliminaries and Notation} \label{hdgnse:stokes:prelim}
We begin by introducing some preliminary notation and assumptions.
Let $\mesh$ be a shape-regular triangulation of an open bounded domain $\Omega$ in $\mathbb{R}^d$ with a Lipschitz boundary $\Gamma$. By $h$ we denote a characteristic mesh size. We note that $h$ can be understood as a local quantity, i.e. it can be different in different parts of the mesh due to a change in the local mesh size. The local character of $h$ will, however, not be reflected in the notation. 
The element interfaces and element boundaries coinciding with the domain boundary are denoted as \emph{facets}. The set of those facets $F$ is denoted by $\facets$ and there holds $\bigcup_{T\in \mesh} \partial T = \bigcup_{F\in \facets} F $. We separate facets at the domain boundary, exterior facets and interior facets by the sets $\facetsext$, $\facetsint$. 
For sufficiently smooth quantities we denote by
$\jumpDG{ \cdot }$ and $\averageDG{ \cdot }$ the usual jump and averaging operators across the facet $F \in \facetsint$. For $F \in \facetsext$ we define $\jumpDG{ \cdot }$ and $\averageDG{ \cdot }$ as the identity.

We restrict to the case where the mesh consists of straight simplicial elements $T$.
Further, in the analysis we consider only the case of homogeneous Dirichlet boundary conditions to simplify the presentation. By $\mathcal{P}^m(F)$ and $\mathcal{P}^m(T)$ we denote the space of polynomials up to degree $m$ on a facet $F\in\facets$ and an element $T \in \mesh$, respectively.
By $H^m(\Omega)$ we denote the usual Sobolev space on $\Omega$, whereas $H^m(\mesh)$ denotes its broken version on the mesh, $H^m(\mesh) := \{v \in L^2(\Omega) : v|_T \in H^m(T) ~ \forall ~ T \in \mesh \}$. At various parts we use the notation $( \cdot ,\cdot)_\omega$ as the $L^2$ product on a given domain $\omega$. 

In the discretization we introduce element unknowns which are supported on elements (in the volume) and different unknowns which are supported only on the skeleton of facets.
We indicate this relation with a subscript $\mathcal{F}$ for unknowns supported on the skeleton of facets and a subscript $\mathcal{T}$ for unknowns that are also supported on volume elements.
For the discretization of the velocity we use both type of functions and denote the compositions of volume and facet functions by $u_h = (\HdivVar,\FacetVar)$.
For restrictions of a function $\HdivVar$ ($\FacetVar$) to individual elements $T\in\mesh$ (facets $F\in\facets$) we use the subscripts $T$ ($F$), $\HdivVarT = \HdivVar|_T$($\FacetVarF = \FacetVar|_F$).

At several occasions we further distinguish tangential and normal directions of vector-valued functions. We therefore introduce the notation with a superscript $t$ to denote the tangential projection on a facet, $v^t = v - (v \cdot n)\cdot n \in \mathbb{R}^d$, where $n$ is the normal vector to a facet.
The index $k$ which describes the polynomial degree of the finite element approximation at many places through out the paper is an arbitrary but fixed positive integer number.

For the analysis we use the symbol $\partial_x$ for the partial derivative with respect to $x$ and in a similar manner we use the sub indices $(u)_1$ as symbol for the first component of a vectorial function $\VelVarEx$.
At several occasions we use the notation $a \lesssim b$  for $a,b \in \rr$ to express $a \leq c \ \! b$ for a constant $c$ that is independent of $h$.

\medskip

In this section we consider a new Hybrid DG discretization of the Stokes problem in velocity-pressure formulation.
The well-posed weak formulation of the problem is: Find $(\VelVarEx,\PressureVarEx) \in [H_0^1(\Omega)]^d \times L_0^2(\Omega)$, s.t.
\begin{equation}
  \left\{
    \begin{array}{r l l l}
      \displaystyle \int_{\Omega} \nu  {\nabla} \VelVarEx : {\nabla} \VelTestEx \, dx
      & \displaystyle  + \int_{\Omega} \divergence(\VelTestEx) \PressureVarEx \, dx
      & \displaystyle  = \int_{\Omega} {\ForceVar} \VelTestEx \ dx \quad
      & \displaystyle \mbox{ for all } \VelTestEx \in [H^1_0(\Omega)]^d, \\
      \displaystyle \int_{\Omega} \divergence(\VelVarEx) \PressureTestEx \, dx
      & \displaystyle 
      & \displaystyle = 0 \quad
      & \displaystyle \mbox{ for all } \PressureTestEx \in L^2_0(\Omega).
    \end{array} \right.
  \label{eq:stokes}
\end{equation}
Here, we have $L^2_0(\Omega) := \{ q \in L^2(\Omega) : \int_{\Omega} q\, dx = 0 \}$.
In the discretization we take special care about the treatment of the incompressibility condition which is closely related to the choice of finite element spaces, which will be introduced in the next subsection. 
\subsection{Finite element spaces} \label{hdgnse:stokes:fes}
In this subsection we introduce the finite element spaces that we require for our discretization. This involves several steps. 
First, we introduce finite element spaces for the velocity and the pressure as they have been used in DG discretizations for Stokes and Navier-Stokes problems in (among others) \cite{cockburn2007note,guzman2016h}. In a second step, we introduce facet unknowns for the velocity that facilitate a more efficient treatment of arising linear systems in the spirit of Hybrid DG methods, cf. \cite{lehrenfeld2010hybrid,cockburn2011analysis,egger2012hp,LS_CMAME_2016}.
We then introduce a modified velocity space which represents the essential novelty of our discretization.
Compared to the previously introduced space this modified velocity space is more involved. Details on the construction and realization of this space are separately treated in the subsequent subsection \ref{hdgnse:fesdetails}.
We also discuss an alternative approach which allows to circumvent the construction of this space in
\ifarxiv%
Section \ref{hdgnse:stokes:fesalternative} of the appendix.
\else%
\cite[Section A.1]{arXiv}.
\fi%

\subsubsection{$H(\divergence,\Omega)$-conforming finite elements: the \emph{$BDM_k$} space}
Although the velocity solution of the Stokes problem will typically be at least $H^1(\Omega)$-regular, we do not consider $H^1(\Omega)$-conforming finite elements. Instead, we base our discretization on (only) $H(\divergence,\Omega)$-conforming, i.e. normal-continuous, finite elements. These are very well-suited for a stable discretization of the incompressibility constraint \cite{lehrenfeld2010hybrid,cockburn2007note,LS_CMAME_2016}. Note that the non-conformity with respect to $H^1$ will be dealt with using Discontinuous Galerkin formulations in the discretization.
We recall the definition of $H(\divergence, \Omega)$:
\begin{align*}
  H(\divergence,\Omega)
  & := \{ v \in [L^2(\Omega)]^d :
    \divergence(v) \in L^2(\Omega) \}.
\end{align*}
  We use piecewise (vector-valued) polynomials for the approximation of the velocity, so that on each element the functions are automatically in $H(\divergence,T),~T\in\mesh$. For global conformity, continuity of the normal component is necessary. We consider the well-known \emph{$BDM_k$} space:
\begin{align}
  \HdivSpace
  & :=  \{ \HdivVar  \in \prod_{T\in\mesh} [\mathcal{P}^k(T)]^d : \jumpDG{ \HdivVar\! \cdot\! n } = 0,\ \forall F \in \facets \} \subset H(\divergence,\Omega).
\end{align}

\subsubsection{Tangential facet unknowns}
The space $\HdivSpace$ is not $H^1$-conforming so that tangential continuity has to be imposed weakly through a DG formulation for the viscosity terms, for instance as in \cite{cockburn2007note}.
However, the incorporation of tangential continuity in usual DG schemes leads to a huge amount of couplings between neighboring elements which increases the costs for solving the linear systems.
To approach this problem, we decouple element unknowns by introducing additional unknowns on the facets through which tangential continuity is implemented weakly. We note that such a mechanism is not necessary for the normal direction as normal-continuity is implemented (strongly) in the space $\HdivSpace$. We introduce the space for the facet unknowns
\begin{equation}
  \FacetSpace := \{ \FacetVar \in \prod_{F \in \facets} [\mathcal{P}^{k-1}(F)]^d : \FacetVarF \cdot  n = 0,\ \forall F \in \facetsint\},
\end{equation}
which can be seen as an approximation to the tangential trace of the velocity on the facets.
Note that we only consider polynomials up to degree $k-1$ in $\FacetSpace$ whereas we have order $k$ polynomials in $\HdivSpace$. Further, functions in $\FacetSpace$ have normal component zero.


\subsubsection{The pressure space}
For the pressure, the appropriate finite element space to the velocity space $\HdivSpace$ is the space of piecewise polynomials which are discontinuous and of one degree less:
\begin{equation} \label{eq:pressurespace}
  \PressureSpace := \prod_{T\in\mesh} \mathcal{P}^{k-1}(T).
\end{equation}
This velocity-pressure pair $\HdivSpace$/$\PressureSpace$ fulfills 
$\divergence ( \HdivSpace ) = \PressureSpace$ and we hence have that if a velocity $\HdivVar \in \HdivSpace$ is weakly incompressible, it is also strongly incompressible:
\begin{equation} \label{eq:stronginc}
  ( \divergence( \HdivVar) , \PressureTest )_\Omega = 0 \ \forall \PressureTest \in \PressureSpace \quad \Leftrightarrow \quad \divergence(\HdivVar)=0 \text{ in } \Omega.
\end{equation}
Hence, this choice of the velocity-pressure pair easily results in a pointwise divergence free solution. This property is crucial to obtain energy-stable schemes for Navier-Stokes discretizations, cf. \cite{cockburn2005locally}. We further note, that this finite element pair further has the remarkable property of providing an LBB-constant that is robust in the mesh size $h$ and the polynomial degree $k$, cf. Lemma \ref{lem:lbb} and \cite{LedererSchoeberl2016}.

\subsubsection{A modified $BDM_k$ space}
With the velocity unknowns in $\HdivSpace$ and $\FacetSpace$ we have polynomials up to degree $k$ associated to the normal direction of a facet, but only polynomials up to degree $k-1$ for the tangential direction of a facet. As these unknowns are the globally coupled unknowns, they essentially decide on the computational costs associated with solving linear systems. The question arises if we can reduce the polynomial degree for the normal direction also to degree $k-1$. We can achieve this only by relaxing the normal continuity of $\HdivSpace$.
To this end, we introduce a modified Finite Element space with ``relaxed $H(\divergence)$-conformity'', 
\begin{equation} \label{eq:weaklyhdivconf}
  \HdivSpaceHODC := \{ \HdivVar  \in \prod_{T\in\mesh} [\mathcal{P}^k(T)]^d: \facetproj^{k-1} \jumpDG{ \HdivVar\! \cdot\! n } = 0, \ \forall \ F \in \facets \} \not \subset H(\divergence,\Omega),
\end{equation}
where $\facetproj^{k-1}: [L^2(F)]^d \rightarrow [\mathcal{P}^{k-1}(F)]^d$ is the $L^2(F)$ projection into $[\mathcal{P}^{k-1}(F)]^d$:
\begin{equation} \label{eq:facetproj}
 ( \facetproj^{k-1} \ \! {w} ,  {\DiscTest})_F = ({w}, {\DiscTest})_F \quad \forall \ {\DiscTest} \in [\mathcal{P}^{k-1}(F)]^d.
\end{equation}
Details on the constructions of the finite element space $\HdivSpaceHODC$ are given below, in section \ref{hdgnse:fesdetails}.
Functions in $\HdivSpaceHODC$ are only ``almost normal-continuous'', but can be normal-discontinuous in the highest orders. The relaxation reduces the number of unknowns that are shared by neighboring elements which improves the sparsity pattern of the linear systems.
However, this modification comes at a price. 
We obviously have $\HdivSpace \subset \HdivSpaceHODC$ and $\HdivSpace|_T = \HdivSpaceHODC|_T,~\forall~ T \in \mesh$, but the pair $\HdivSpaceHODC$/$\PressureSpace$ only has the local property
$
\divergence ( \HdivSpaceHODC|_T ) = \divergence ( \HdivSpace|_T ) = \PressureSpace|_T~ \forall~ T \in \mesh$. 
If a velocity $\HdivVar \in \HdivSpaceHODC$ is weakly incompressible (in the sense of the left hand side of \eqref{eq:stronginc}), we only have
\begin{equation} \label{eq:stronginc2}
  \divergence(\HdivVarT)=0 \text{ in } T \in \mesh \text{ and } \Pi^{k-1} \jumpDG{ u_T \cdot n } = 0,~\forall~T\in\mesh.
\end{equation}
In order to obtain mass conservative velocity fields we are missing normal continuity in the higher order moments, $(\operatorname{id} - \Pi^{k-1}) \jumpDG{ u_T \cdot n } = 0$.

For the discretization of the velocity field we use the composite space 
\begin{equation}
  \VelSpace := \HdivSpaceHODC \times \FacetSpace.
\end{equation}
and define the tangential jump operator $\facetjump{\VelVar^t } = \HdivVarT^t|_F - u_F,~ F \in \facetsint,~ T \in \mesh$. We notice that the jump $\facetjump{ \VelVar^t }$ is element-sided that means that it can take different values for different sides of the same facet. Further, note that for $F \in \facetsext$ we have $u_F=0$ so that $\facetjump{ \VelVar^t } = \HdivVar^t|_F$.

A suitable discrete norm on $\VelSpace$ which mimics the $H^1(\Omega)$ norm and a suitable norm for the velocity pressure space $\VelSpace \times \PressureSpace$ are
\begin{equation} \label{eq:discretenorms}
  \brokenHnorm{\VelVar}^2
   \!:= \!\!\! \sumoverallelements
    \!\left\{ \! \Vert {\nabla} \HdivVarT \Vert_T^2  + \frac{1}{h} \Vert \facetproj^{k-1} \facetjump{ \VelVar^t  } \Vert_{\partial T}^2 \right\}\!,~
  \brokenHnorm{(\VelVar,\PressureVar)}
  \!:= \! \sqrt{\nu} \brokenHnorm{\VelVar}\! +\! \frac{1}{\sqrt{\nu}} \Vert \PressureVar \Vert_{L^2(\Omega)}.
\end{equation}

\subsection{Relaxed $H(\divergence)$-conforming HDG formulation} \label{sec:hdgstokes}
The basis of our Hybrid DG formulation is the formulation in \cite{LS_CMAME_2016} which is based on the spaces $\HdivSpace$, $\PressureSpace$ and $\FacetSpace$.
In contrast to that formulation, we now replace $\HdivSpace$ with $\HdivSpaceHODC$.
This will lead to an order-optimal \emph{basic} method with a reduced number of globally coupled unknowns.
Due to the relaxation of the normal-continuity velocity solutions will (in contrast to the discretization in \cite{LS_CMAME_2016} with $\HdivSpace$) be neither exactly solenoidal nor pressure robust, i.e. the error in the velocity solution will depend on the approximation of the pressure.
However, both drawbacks can be dealt with using a simple and computationally cheap reconstruction operator. 
In section \ref{hdgnse:stokes:reconstruction}, we address the issue of restoring normal-continuity strongly (after the solution of linear systems) by a reconstruction operator.
In section \ref{hdgnse:stokes:pressurerobust}, we discuss how to also restore pressure robustness. 

The \emph{basic} discretization is as follows:
Find $\VelVar=(\HdivVar, \FacetVar) \in \VelSpace$ and $\PressureVar \in \PressureSpace$, s.t.
\begin{equation}\label{eq:discstokes}
  \displaystyle
  \left\{
  \begin{array}{crcrl}
    \bilinearform{A}(\VelVar,{\VelTest})
    & + \ \ \bilinearform{B}({\VelTest},\PressureVar)
    & =
    & \ForceVar ( \VelTest )
    & \forall \ \VelTest \in \VelSpace,\\
    & \bilinearform{B}(\VelVar,\PressureTest)
    & =
    & 0
    & \forall \ \PressureTest \in \PressureSpace,
  \end{array}
      \right.
      \tag{B}
\end{equation}
with the bilinear forms corresponding to viscosity ($\bilinearform{A}$), pressure and incompressibility ($\bilinearform{B}$) defined in the following.
For the viscosity we introduce the bilinear form $\bilinearform{A}$ for $\VelVar, \VelTest \in \VelSpace$ as 
\begin{align} \label{eq:blfA}
    \bilinearform{A}(\VelVar,\VelTest) :=
    & \displaystyle \sumoverallelements \int_{T} \nu {\nabla} {\HdivVarT} \! : \! {\nabla} {\HdivTestT} \ d {x} - \int_{\partial T} \nu \frac{\partial {\HdivVarT}}{\partial {\normal} }  \facetproj^{k-1} \facetjump{ \VelTest^t } \ d {s} \\
    & \displaystyle- \int_{\partial T} \nu \frac{\partial {\HdivTestT}}{\partial {\normal} } \facetproj^{k-1} \facetjump{ \VelVar^t } \ d {s}
      + \int_{\partial T} \nu \frac{\stab k^2}{h} \facetproj^{k-1} \facetjump{ \VelVar^t }  \facetproj^{k-1} \facetjump{ \VelTest^t } \ d {s}, \nonumber
\end{align}
where $\stab$ is chosen, such that the bilinearform is coercive w.r.t.
$ \brokenHnorm{\cdot}$ on $\VelSpace$, see Lemma \ref{lem:coercivity} below.
The $L^2(F)$-projection $\facetproj^{k-1}$, c.f. \eqref{eq:facetproj}, realizes a reduced stabilization \cite[Section 2.2.1]{LS_CMAME_2016}.
If we replace $\facetproj^{k-1}$ with $\operatorname{id}$ in \eqref{eq:blfA} we obtain the usual hybridized interior penalty formulation of the viscosity as used in \cite{lehrenfeld2010hybrid,rhebergenwells2016}.
In the literature of DG methods many alternatives to the interior penalty method are known. For many of these alternatives there is a corresponding HDG version which is also applicable, we however restrict to the interior penalty method for ease of presentation.
Implementational aspects of the projection operator $\facetproj^{k-1}$ have been discussed in \cite[Section 2.2.2]{LS_CMAME_2016}.

The bilinear form for the pressure part and the incompressibility constraint is
\begin{equation}\label{eq:blfB}
  \bilinearform{B}({\VelVar},\PressureVar) := \sumoverallelements - \int_{T} \PressureVar 
  \divergence ({\HdivVarT}) \ d {x} \quad \text{ for } \VelVar \in \VelSpace, \ \PressureVar \in \PressureSpace.
\end{equation}

We denote the discretization in \eqref{eq:discstokes} as our \emph{basic} discretization.
Note that despite the naming, the method is \emph{hybrid} only with respect to the tangential velocity trace. In
\ifarxiv%
Section \ref{hdgnse:stokes:fesalternative} of the appendix
\else%
\cite[Section A.1]{arXiv}
\fi%
we discuss an equivalent \emph{hybridized} formulation based on scalar finite element spaces.
In the next subsections we introduce modifications related to the relaxed $H(\divergence)$-conformity.

\subsection{A reconstruction operator for strong $H(\divergence)$-conformity}\label{hdgnse:stokes:reconstruction}
Velocity solutions to \eqref{eq:discstokes} are in general not solenoidal as $(\operatorname{id} - \facetproj^{k-1}) \jumpDG{ {\HdivVar} \cdot n }$ can be non-zero.
In Navier-Stokes or other coupled problems where the velocity solution serves as an advective velocity it is benefitial to have an exact solenoidal field.
Similarly to the approach in \cite{cockburn2005locally} we propose to use a reconstruction operator for the velocity solution which restores $H(\divergence)$-conformity.
We denote such a reconstruction operator as $\ReconHdiv: \HdivSpaceHODC \to \HdivSpace$
and define the canonical extension to $\HdivSpace \times \FacetSpace$ as
\begin{equation}
  \Recon: \quad \HdivSpaceHODC \times \FacetSpace \to  \HdivSpace \times \FacetSpace, \qquad \Recon \VelVar := (\ReconHdiv \HdivVar, \FacetVar ). 
\end{equation}
We make the following assumptions on this reconstuction operator:
\begin{assumption} \label{ass:recon}
  We assume that the reconstruction operators $\ReconHdiv$ and $\Recon$, respectively, fulfill the following conditions:
  \begin{subequations} \label{eq:ass}
    \begin{align}
      \ReconHdiv \HdivTest
      & \in \HdivSpace
      && \text{for all } \HdivTest \in \HdivSpaceHODC, \label{eq:ass1}\\
      (\ReconHdiv \HdivTest \!\cdot\! n, \varphi)_{F}
      & =
      (\HdivTest\! \cdot\! n, \varphi)_{F}
      && \text{for all } \varphi \in \mathcal{P}^{k-1}(F), 
         \HdivTest \in \HdivSpaceHODC,~ F \in \facets,  \label{eq:ass2a}\\      
      (\ReconHdiv \HdivTest, \varphi)_{T}
      & =
      (\HdivTestT, \varphi)_{T}
      && \text{for all } \varphi \in [\mathcal{P}^{k-2}(T)]^d, 
         \HdivTest \in \HdivSpaceHODC,~ T \in \mesh,  \label{eq:ass2b}\\
    \brokenHnorm{\Recon \VelTest} & \lesssim \brokenHnorm{\VelTest} && \text{for all } \VelTest \in \VelSpace. \label{eq:ass3}
    \end{align}
  \end{subequations}
\end{assumption}
Here, \eqref{eq:ass1} ensures $H(\divergence)$-conformity, \eqref{eq:ass2a} ensures that the modes up to order $k-1$ of the normal component are not changed by the operator $\ReconHdiv$, \eqref{eq:ass2b} describes a constistency property in $L^2(\Omega)$ and \eqref{eq:ass3} ensures stability in the discrete energy norm. 
A reconstruction operator with these properties maps weakly divergence free velocities (in the sense of \eqref{eq:discstokes}) onto pointwise divergence free velocities, cf. Lemma \ref{lem:recon} below.

One possible choice is a (Discontinuous Galerkin) generalization of the classical BDM interpolation \cite[Proposition 2.3.2]{brezzi2012mixed}, $\ReconHdiv^{BDM} : [H^1(\mesh)]^d \rightarrow \HdivSpace$. It has also been used in \cite{guzman2016h}. The interpolation is defined element-by-element for $\HdivVar \in [H^1(\mesh)]^d$ by
\begin{subequations} \label{eq:bdmprops}
  \begin{align}
    ( \ReconHdiv^{BDM}\HdivVar\!\cdot\! n, \varphi)_F
    & = ( \averageDG{\HdivVar}\!  \cdot \!n, \varphi)_F
    \hspace*{-0.25cm}&\hspace*{0cm}& \text{ for all } \varphi \in \mathcal{P}^k(F), \ F \subset \partial T, \label{bdm1}\\
    ( \ReconHdiv^{BDM}\HdivVar,\varphi)_T
    & = ( \HdivVarT, \varphi)_T
      \hspace*{-0.25cm}&\hspace*{0cm}& \text{ for all } \varphi \in \mathcal{N}^{k-2}(T), \label{bdm2}
  \end{align}
\end{subequations}
with $\mathcal{N}^{k-2}(T) := [\mathcal{P}^{k-2}(T)]^d + [\mathcal{P}^{k-2}(T)]^d \times x$.
We note that the definition of the DG-version BDM interpolation is slightly different from the one used in \cite{cockburn2005locally}. We note that this reconstruction operator satisfies all the previously mentioned assumptions. The assumptions \eqref{eq:ass1}, \eqref{eq:ass2a} and \eqref{eq:ass2b} are satisfied by construction and stability -- in the sense of \eqref{eq:ass3} -- has also been shown already in \cite{cockburn2005locally}.
The reconstruction operator in \cite{cockburn2005locally} is designed for a fully discontinuous velocity space.

Based on the finite element basis that we discuss in section \ref{hdgnse:fesdetails} we propose a simple and efficient realization of a similar, but different, interpolation operator (see section \ref{hdgnse:stokes:reconstruction:average})  which also fulfills assumption \ref{ass:recon}. 

\subsection{Pressure robust relaxed $H(\divergence)$-conforming HDG formulation}\label{hdgnse:stokes:pressurerobust}
It is well known that irrotational parts (in the sense of a continuous Helmholtz decomposition) of an exterior force $\ForceVar$ are $L^2$-orthogonal on exactly divergence free velocities $\VelTestEx \in [H^1_0(\Omega)]^d$. Nevertheless this orthogonality may not hold true in the discrete case leading to a velocity error estimate which depends on the pressure approximation \cite{linke2014role}. Recent works \cite{blms:2015, 2016arXiv160903701L,Linke:2012,lmt:2016} considering different velocity and pressure spaces have shown that a modification of the right hand side allows to obtain pressure-robust, thus pressure independent, error estimates. This is achieved by mapping weakly divergence free test functions to exactly divergence free functions resulting in a restored $L^2$ orthogonality with respect to the irrotational parts of $\ForceVar$. Note that $H(\divergence)$-conforming methods as for example \cite{LS_CMAME_2016, cockburn2007note} provide exactly divergence free velocities and thus do not suffer from the described problems. Due to the modification \eqref{eq:weaklyhdivconf} functions $\VelVar \in \VelSpace $ are only weakly divergence free, see equation \eqref{eq:stronginc2}. The reconstruction operator introduced in the Section \ref{hdgnse:stokes:reconstruction} allows to obtain a pressure robust discretization also for our new method.
The discretization then takes the form:
Find $\VelVar \in \VelSpace$ and $\PressureVar \in \PressureSpace$, s.t.
\begin{equation}\label{eq:probuststokes}
  \displaystyle
  \left\{
  \begin{array}{crcrl}
    \bilinearform{A}(\VelVar,{\VelTest})
    & + \ \ \bilinearform{B}({\VelTest},\PressureVar)
    & =
    & \ForceVar( \Recon \VelTest)
    & \forall \ \VelTest \in \VelSpace,\\
    & \bilinearform{B}(\VelVar,\PressureTest)
    & =
    & 0
    & \forall \ \PressureTest \in \PressureSpace.
  \end{array}
      \right.
      \tag{PR}
\end{equation}
Solutions $\VelVar \in \VelSpace$ to \eqref{eq:probuststokes} are not exactly divergence free but provide a pressure independent velocity error estimate. This involves a Strang type consistency estimate and is proven in section \ref{analysis:probust}. Furthermore, solutions to \eqref{eq:probuststokes} can be post-processed with a subsequent application of $\Recon$ to obtain an exactly divergence free solution.

\section{On the construction of the finite element spaces} \label{hdgnse:fesdetails}
In this section we address the construction of the finite element spaces $\HdivSpaceHODC$, $\FacetSpace$, an efficient reconstruction operator $\ReconHdiv$ and the realization of the projected jumps operator $\facetproj^{k-1}$.

In the subsections \ref{section:localspaces} and \ref{sec:proplocalbasis} we introduce the finite element basis functions on a reference element and explain how these are composed to a global finite element space $\HdivSpaceHODC$ in subsection \ref{section:globalspaces}. Based on these preparations we can introduce an efficient reconstruction operator $\ReconHdiv$ which meets the requirements of Assumption \ref{ass:recon} in subsection \ref{hdgnse:stokes:reconstruction:average}.

\subsection{Construction of a local $H(\divergence)$-conforming FE Space} \label{section:localspaces}
In this section we define shape functions to construct a basis of a local space $\HdivSpaceref$ on the reference triangle $\widehat{T}$ given as the convex hull of the vertices $\mathcal{V} = \{V_i\}_{i=1}^3 :=\{(-1,0), (1,0), (0,1) \}$ and $\mathcal{V}= \{V_i\}_{i=1}^4 := \{ (-1,0,0), (1,0,0), (0,1,0), (0,0,1) \}$ for two and three dimensions respectively.
Using a proper transformation we can then construct the global spaces $\HdivSpace$ and $\HdivSpaceHODC$, see section \ref{section:globalspaces}. For the construction of the local space $\HdivSpaceref$ we refer to \cite{Beuchler2012} where the basic concepts presented in \cite{zaglmayr2006high} are combined with some adaptations leading to a sparsity optimized high order basis. The novel choice of the element basis functions is such that their normal component form a hierarchical $L^2$ orthogonal basis on faces, see Lemma \ref{lem:normalltwoorthogonal}. The idea in \cite{zaglmayr2006high} is to construct conforming finite element spaces  for $H^1, H(\divergence), H(\curl)$ and $L^2$ which fit in the setting of the exact de Rham Complex.
This construction allows to split the space $\HdivSpace \subset H(\divergence)$ into divergence free basis functions given by the $\curl$ of basis functions in $\HcurlSpace \subset H(\curl)$ and functions with a non zero divergence.
On the reference triangle this leads to the following decomposition 
\begin{equation}\label{eq:decomposition}
  \HdivSpaceref = RT_0 \oplus \HdivSpaceref^{\text{F}} \oplus \HdivSpaceref^{\text{divfree,T}} \oplus \HdivSpaceref^{\text{div,T}}, 
\end{equation}
with the lowest order Raviart-Thomas subspace $RT_0$, see \cite{Nedelec1986}, the subspace of higher order divergence free facet functions $\HdivSpaceref^{\text{F}}$
the subspace of cell based divergence free basis functions $\HdivSpaceref^{\text{divfree,T}}$ and the subspace of cell based functions that have a non zero divergence $\HdivSpaceref^{\text{div,T}}$.

Now let $\jacobi_n^{(\alpha,\beta)}$ be the $n$-th Jacobi polynomial and $\intjacobi_n^{(\alpha,\beta)}$ the $n$-th integrated Jacobi polynomial, cf. \cite{Abramowitz,andrews,MR2221053},
\begin{align*}
 \jacobi_n^{(\alpha,\beta)}(x)\! &:=\frac{1}{2^n n! (1-x)^\alpha(1+x)^\beta} \frac{\mathrm{d}^n}{\mathrm{d} x^n} \!\left( (1\!-\! x)^\alpha(1\!+\!x)^\beta(x^2\!\!-\!1)^n\right)\!, && n \in \mathbb{N}_0, \alpha,\beta \!>\! -1,
 \\
  \intjacobi_n^{(\alpha,\beta)}(x) &:= \int_{-1}^x \jacobi_{n-1}^{(\alpha,\beta)}(\xi) \ d {\xi}, && n\ge 1, \intjacobi_0^{\alpha}(x) = 1.
\end{align*}
In our case we use $\beta = 0$ so we skip to the simpler notation $\jacobi_n^{(\alpha,0)}(x) = \jacobi_n^{\alpha}(x)$ and $\intjacobi_n^{(\alpha,0)}(x) = \intjacobi_n^{\alpha}(x)$. Note that there holds the following orthogonality property.
\begin{align} 
  \int_{-1}^1 (1-x)^\alpha \jacobi_j^\alpha (x) \jacobi_l^\alpha (x) \ d {x} & = c_j^\alpha \delta_{jl} \hspace*{-0.2cm}&& \text{with} \quad  c_j^\alpha = \frac{2^{\alpha+1}}{2j+ \alpha + 1}, \label{eq:jacobiortho} \\
  \int_{-1}^1 (1-x)^\alpha \intjacobi_j^\alpha (x) \intjacobi_l^\alpha (x) \ d {x} & = 0 \hspace*{-0.2cm}&& \text{for} \quad |j-l| > 2. \label{eq:intjacobiortho}
\end{align}
Furthermore we define $\lambda_i \in \mathcal{P}^1(\widehat{T})$ as the barycentric coordinates uniquely determined by $\lambda_i(V_j) = \delta_{ij}$.
\subsubsection{Basis for two dimensions} \label{2dbasis}
For a fixed order $k$ we use the same construction of the basis as presented in chapter 3 in \cite{Beuchler2012}.  Let $[f_1,f_2]$ be the edge running from vertex $V_{f_1}$ and $V_{f_2}$ and define $u_i := \intjacobi_i^{0} \left( \frac{\lambda_{2} -\lambda_{1}}{\lambda_{2} + \lambda_{1}}\right) (\lambda_{2} + \lambda_{1})^i$ and $v_{ij} := \intjacobi_j^{2i-1}(2\lambda_3-1)$. Then we have: \newline
\begin{itemize}
\item The lowest order Raviart Thomas basis functions $\phi_0^{[e_1,e_2]} := \curl(\lambda_{e_1}) \lambda_{e_2} - \lambda_{e_1}\curl(\lambda_{e_2})$ such that \vspace*{-0.15cm}
  \begin{align*}
    {RT}_0 &= \hull (\{\phi_0^{[1,2]}, \phi_0^{[2,3]},\phi_0^{[3,1]}\}).
  \end{align*} ~ \\[-6ex]
\item High order edge based basis functions defined by \\$\phi_i^{[f_1,f_2]} := \curl \left( \intjacobi_{i+1}^{0}\left(\frac{\lambda_{f_2} -\lambda_{f_1}}{\lambda_{f_2} + \lambda_{f_1}}\right) (\lambda_{f_2} +\lambda_{f_1})^{i+1} \right)$ and
  ${\Phi}^{[f_1,f_1]} := \{ \phi_i^{[e_1,e_2]} \}$ for $1 \le i \le k$, such that \vspace*{-0.15cm}
  \begin{align*}
  \HdivSpaceref^{\text{F}} &= \hull( {\Phi}^{[1,2]}) \oplus \hull( {\Phi}^{[2,3]} ) \oplus \hull( {\Phi}^{[3,1]} ).
  \end{align*} ~ \\[-6ex]
\item Divergence free cell based basis functions $\phi_{ij}^{(a)}:=\curl \left( u_i v_{ij} \right)$ with ${\Phi}^{(a)} :=\{ \phi_{ij}^{(a)} \}$ for $i \ge 2, j \ge 1, i+j \le k+1$, such that \vspace*{-0.15cm}
  \begin{align*}
    \HdivSpaceref^{\text{divfree,T}} &= \hull( {\Phi}^{(a)}).
  \end{align*} ~ \\[-6ex]
\item Cell based basis functions with a non zero divergence $\phi_{1l}^{(b)}:=2 \phi_0^{[1,2]} \intjacobi_l^3(2\lambda3-1)$ and $\phi_{ij}^{(c)}:= \curl (u_i) v_{ij}$ with ${\Phi}^{(b)} := \{ \phi_{1l}^{(b)} \}$ for $1\le l \le k-1$ and ${\Phi}^{(c)} := \{ \phi_{ij}^{(c)} \}$ for $i \ge 2, j \ge 1, i+j \le k +1$, such that \vspace*{-0.15cm}
  \begin{align*}
    \HdivSpaceref^{\text{div,T}} &= \hull( {\Phi}^{(b)}) \oplus \hull( {\Phi}^{(c)}).
  \end{align*} ~ \\[-6ex]
\end{itemize}
\subsubsection{Basis for three dimensions} \label{3dbasis}
Let $k$ be a fixed order. For our local basis we nearly use the same construction as presented in chapter 4 in \cite{Beuchler2012}.  Let $F = [f_1,f_2, f_3]$ be the face defined as the convex hull of $V_{f_1},V_{f_2}$ and $V_{f_3}$ and define
\begin{align*}
  u_i &:= \intjacobi_i^{0} \left( \frac{\lambda_{2} -\lambda_{1}}{\lambda_{2} + \lambda_{1}}\right) (\lambda_{2} + \lambda_{1})^i, \quad  w_{ijk} := \intjacobi_j^{2i+2j-2} (2\lambda_4-1),  \\
 v_{ij} &:= \intjacobi_j^{2i-1}\left(\frac{2\lambda_3-(1-\lambda_4) }{1-\lambda_4} \right) (1-\lambda_4)^j,\\
  u^F_i &:= \intjacobi_i^{0} \left( \frac{\lambda_{f_2} -\lambda_{f_1}}{\lambda_{f_2} + \lambda_{f_1}}\right) (\lambda_{f_2} + \lambda_{f_1})^i, \quad   \mathcal{N}_0^{[f_1,f_2]} :=  \nabla \lambda_{f_1} \lambda_{f_2} - \lambda_{f_1} \nabla \lambda_{f_2}, \\
  v^F_{ij} &:= \intjacobi_j^{2i-1}\left(\frac{\lambda_{f_3}- \lambda_{f_2} - \lambda_{f_1}}{\lambda_{f_3} + \lambda_{f_2} + \lambda_{f_1}} \right) (\lambda_{f_3}+ \lambda_{f_2} + \lambda_{f_1})^j,
\end{align*}
where $\mathcal{N}_0^{[f_1,f_2]}$ is the lowest order Ned\'{e}l\'{e}c function for the edge $[f_1,f_2]$. Then we have:
\begin{itemize}
\item The lowest order Raviart Thomas basis functions $\phi_{00}^{[f_1,f_2,f_3]} := \lambda_{f_1} \nabla \lambda_{f_2} \times \nabla \lambda_{f_3} + \lambda_{f_2} \nabla \lambda_{f_3} \times \nabla \lambda_{f_1} + \lambda_{f_3} \nabla \lambda_{f_1} \times \nabla \lambda_{f_2}$, such that
  \begin{align*}
    {RT}_0 &= \hull ({\Phi}_0) \quad \text{with} \quad {\Phi}_0 := \{\phi_{00}^{[1,2,3]}, \phi_{00}^{[1,3,4]},\phi_{00}^{[1,2,4]},\phi_{00}^{[2,3,4]}\}
  \end{align*}
  \item High order face based basis functions $\phi_{0l}^{F} := \nabla \times (\mathcal{N}_0^{[f_1,f_2]} v_{2l}^F )$ and $\phi_{ij}^{F} := \nabla \times \left( \nabla u_{i+1}^Fv_{(i+1)(j+1)}^F \right) = -\nabla u_{i+1}^F \times \nabla v_{(i+1)(j+1)}^F $  with  ${\Phi}^{F} := \{ \phi_{0l}^{F} \} \cup \{\phi_{ij}^{F} \}$ for $1 \le l \le k$ and $i \ge 1, i+j \le k$, such that
  \begin{align*} 
  \HdivSpaceref^{\text{F}} &= \hull( {\Phi}^{[1,2,3]})  \oplus \hull( {\Phi}^{[1,3,4]}) \oplus \hull( {\Phi}^{[1,2,4]}) \oplus \hull( {\Phi}^{[2,3,4]}).
  \end{align*}
\item divergence free cell based basis functions $\phi_{1jl}^{(a)}:=\nabla \times  (\mathcal{N}_0^{[1,2]} v_{2j} w_{2jl} )$ and $ \phi_{ijl}^{(b)}:=\nabla \times  \left( \nabla u_i v_{ij} w_{ijl} \right)$ and $\phi_{ijl}^{(c)}:=\nabla \times  \left( \nabla (u_i v_{ij}) w_{ijl} \right)$ with ${\Phi}^{(a)} :=\{ \phi_{1jl}^{(a)} \}$ for $j,l \ge 1, j+l \le k$, and ${\Phi}^{(b)} :=\{ \phi_{ijl}^{(b)} \}$ for $i \ge 2, j,l \ge 1, 1+j+l \le k+2$, and ${\Phi}^{(c)} :=\{ \phi_{ijl}^{(c)} \}$ for $i \ge 2$ and $j,l \ge 1, i+j+l \le k+2$, such that
  \begin{align*}
    \HdivSpaceref^{\text{divfree,T}} &= \hull( {\Phi}^{(a)}) \oplus \hull( {\Phi}^{(b)}) \oplus \hull( {\Phi}^{(c)}). 
  \end{align*}
\item Cell based basis functions with a non zero divergence $\phi_{10l}^{(d)}:= 4 \phi_0^{[1,2,3]} w_{21l}$, and $\phi_{1jl}^{(e)}:= 2 \mathcal{N}_0^{[1,2]}  \times \nabla w_{2jl} v_{2j}$, and $\phi_{ijl}^{(f)}:=  w_{ijl} \nabla u_i \times \nabla v_{ij}$ with ${\Phi}^{(d)} := \{ \phi_{10l}^{(d)} \}$ for $1 \le l \le k-1$ and ${\Phi}^{(e)} := \{ \phi_{1jl}^{(e)} \}$ for $j,l \ge 1, j+l \le k$, and ${\Phi}^{(f)} := \{ \phi_{ijl}^{(f)} \}$ for $i \ge 2, j,l \ge 1, i+j+l \le k+2$, such that
  \begin{align*} 
    \HdivSpaceref^{\text{div,T}} &= \hull( {\Phi}^{(d)}) \oplus \hull( {\Phi}^{(e)}) \oplus \hull( {\Phi}^{(f)}). 
  \end{align*}
\end{itemize}
\begin{remark}\label{rem:hodivfree}
Due to the decomposition \eqref{eq:decomposition} we can conclude that for an arbitrary function $v_h$ in $\HdivSpaceref$ with $\divergence({v_h})=0$ the coefficients corresponding to the basis functions of the subspace $\HdivSpaceref^{\text{div,T}}$, which have a non zero divergence, are equal to zero. We can use this for the approximation of the saddle point problem \eqref{eq:discstokes} keeping in mind the pressure variable can be interpreted as a Lagrangian multiplier to fulfill the incompressibility constraint $\divergence (\VelVar) = 0$. Where the lowest order piece wise constant pressure basis functions compensate the divergence of the Raviart Thomas basis functions of the velocity, the high order part of the pressure space is needed to handle the divergence of velocity basis functions in $\HdivSpaceref^{\text{div,T}}$. In order to reduce the size of the problem one can now simply remove the basis functions of $\HdivSpaceref^{\text{div,T}}$ and use the pressure space $\PressureSpace := \prod_{T\in\mesh} \mathcal{P}^{0}(T)$ instead. Note that this has no influence in the (optimal) error estimation of the velocity and furthermore one can use a simple element-wise post processing to obtain a high order approximation of the pressure.
\end{remark}
\begin{remark}
  The differences of the presented basis compared to the one given in \cite{Beuchler2012} are the high order face based basis functions $\phi^{F}_{0l}$ where we have chosen a different index and a proper scaling into the direction of the opposite vertex of the face function $v_{ij}^F$. Note that this has no influence on the linear in dependency, thus the presented set of basis functions is still a basis for $\HdivSpaceref$. Furthermore we still have $\divergence(\phi^{F}_{0l}) = 0$, thus we have the same sparsity pattern of the $\divergence\divergence$-stiffness matrix and even a better sparsity pattern of the mass matrix (see Lemma \ref{lem:facetortho})  as in \cite{Beuchler2012}.
\end{remark}
\subsection{Properties of the local basis} \label{sec:proplocalbasis}
Beside the sparsity properties of the stiffness and mass matrix (see \cite{Beuchler2012}) for the basis introduced in section \ref{2dbasis} and \ref{3dbasis} in this section, we proof properties that are important for proving Assumption \ref{ass:recon} for the reconstruction operator that we present below, in section \ref{hdgnse:stokes:reconstruction:average}.
\begin{lemma}[$L^2$-normal orthogonality] \label{lem:normalltwoorthogonal}
The basis presented in section~\ref{section:localspaces} has a $L^2$-orthogonal normal trace. 
In two dimensions there holds ($c\!=\!c(i,j)\!>\!0$)
\begin{subequations}
\begin{align}
                                                            (\phi^F_i \cdot n ,\phi^F_j \cdot n)_F &= c \delta_{ij}  \quad \forall~i,j=0,..,k \quad \forall F \subset \partial \widehat{T}.\label{eq:normalltwoorthogonaltwodsecond}
\end{align}
In three dimensions there holds ($c = c(i,j,l,m)>0$)
\begin{align}
 (\phi^F_{ij} \cdot n,\phi^F_{lm} \cdot n)_F &= c \delta_{il} \delta_{jm},~\forall i,j,l,m\leq k,~~ i\!+\!j,l\!+\!m\!\leq\! k,~~\forall F \subset \partial \widehat{T}. \label{eq:normalltwoorthogonalthreedsecond}
\end{align}
\end{subequations}
\end{lemma}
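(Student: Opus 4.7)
The plan is to reduce both identities to the classical Jacobi orthogonality \eqref{eq:jacobiortho} by identifying the normal trace on $F$ of each facet-based basis function as a (product of) Jacobi polynomials in suitable tangential coordinates. A preliminary observation is that, by the hierarchical $H(\divergence)$-conforming construction of \cite{zaglmayr2006high, Beuchler2012} underlying the shape functions $\phi^F_i$, $\phi^F_{ij}$, the normal trace of these functions vanishes on every facet of $\widehat T$ other than the associated one. Hence only the integral over $F$ itself has to be analysed.

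In two dimensions I would parametrise $F=[f_1,f_2]$ by $t:=\lambda_{f_2}-\lambda_{f_1}\in[-1,1]$ and use $\lambda_{f_1}+\lambda_{f_2}=1$ on $F$ to simplify $\phi_i^{[f_1,f_2]}$ to $\curl(\intjacobi_{i+1}^{0}(t))$. Since $\curl(\psi)\cdot n=\partial_\tau \psi$ on an edge, and $(\intjacobi_{i+1}^0)'=\jacobi_i^0$, one obtains $\phi_i^{[f_1,f_2]}\cdot n|_F=\alpha_i\,\jacobi_i^0(t)$ for $i\ge 1$ with $\alpha_i\ne 0$; the Raviart--Thomas contribution $\phi_0^{[f_1,f_2]}$ has constant normal trace $\alpha_0=\alpha_0\,\jacobi_0^0(t)$ on $F$. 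Then \eqref{eq:normalltwoorthogonaltwodsecond} follows at once from \eqref{eq:jacobiortho} with $\alpha=0$.

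In three dimensions I would introduce the Duffy tangential coordinates $\xi:=(\lambda_{f_2}-\lambda_{f_1})/(\lambda_{f_2}+\lambda_{f_1})$ and $\eta:=2\lambda_{f_3}-1$ on $F=[f_1,f_2,f_3]$, under which the surface element takes the form $dA\propto(1-\eta)\,d\xi\,d\eta$. Restricted to $F$ (using $\lambda_{f_1}+\lambda_{f_2}+\lambda_{f_3}=1$ and $\lambda_{f_1}+\lambda_{f_2}=(1-\eta)/2$) the factors read $u^F_{i+1}|_F=\intjacobi_{i+1}^0(\xi)((1-\eta)/2)^{i+1}$ and $v^F_{(i+1)(j+1)}|_F=\intjacobi_{j+1}^{2i+1}(\eta)$. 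Using $\phi^F_{ij}=-\nabla u^F_{i+1}\times\nabla v^F_{(i+1)(j+1)}$ and writing the normal component of a cross product of gradients as the two-dimensional Jacobian determinant in the tangent coordinates, then applying the chain rule and cancelling a factor $(1-\eta)$ coming from the inverse Duffy Jacobian, one arrives at
\begin{equation*}
  \phi^F_{ij}\cdot n_F\big|_F=\gamma_{ij}\,\jacobi_i^0(\xi)\,(1-\eta)^{\,i}\,\jacobi_j^{2i+1}(\eta),\qquad\gamma_{ij}\ne 0,
\end{equation*}
with an analogous identity (same product structure, with $i=0$) for $\phi^F_{0l}$ built from $\mathcal{N}_0^{[f_1,f_2]}$ and for the lowest-order Raviart--Thomas face function $\phi^F_{00}$. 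Substituting into the inner product and using the Duffy area element factorises the integral into
\begin{equation*}
\int_{-1}^1 \jacobi_i^0(\xi)\jacobi_l^0(\xi)\,d\xi\cdot\int_{-1}^1(1-\eta)^{i+l+1}\jacobi_j^{2i+1}(\eta)\jacobi_m^{2l+1}(\eta)\,d\eta.
\end{equation*}
The first factor enforces $i=l$ by \eqref{eq:jacobiortho} with $\alpha=0$; once $i=l$ is imposed, the second factor becomes the Jacobi orthogonality integral with $\alpha=2i+1$ and enforces $j=m$, proving \eqref{eq:normalltwoorthogonalthreedsecond}.

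The hard part is the three-dimensional step: namely the careful bookkeeping of the scaling factors $(\lambda_{f_2}+\lambda_{f_1})^{i+1}$, $((1-\eta)/2)^{i+1}$ and the Duffy Jacobian $(1-\eta)$ when restricting integrated-Jacobi arguments to $F$ and then forming the normal component of a curl, together with the separate (but structurally identical) treatment of the transitional cases $\phi^F_{0l}$ and $\phi^F_{00}$, which are built from the Nédélec edge function $\mathcal{N}_0^{[f_1,f_2]}$ rather than from $\nabla u^F$. Once the normal traces have been identified as scaled tensor products of Jacobi polynomials of the stated indices and weights, both statements of the lemma fall out of two applications of the classical Jacobi orthogonality.
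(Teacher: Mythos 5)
Your overall strategy is the same as the paper's: restrict the facet functions to $F$, pass to Duffy coordinates, identify the normal traces with (weighted tensor products of) Jacobi polynomials, and invoke \eqref{eq:jacobiortho}. In two dimensions your computation $\phi_i^{[f_1,f_2]}\cdot n|_F=\alpha_i\,\jacobi_i^0(t)$ is exactly the paper's argument, and in three dimensions your formula $\phi^F_{ij}\cdot n|_F=\gamma_{ij}\,\jacobi_i^0(\xi)(1-\eta)^i\jacobi_j^{2i+1}(\eta)$ for $i\ge 1$, together with the area element $(1-\eta)\,d\xi\,d\eta$, reproduces the paper's factorized integral and the conclusion $i=l$, then $j=m$.

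The one place where you deviate is precisely where the paper does most of its work, and there your write-up has a gap: for the transitional face functions $\phi^F_{0l}=\nabla\times(\mathcal{N}_0^{[f_1,f_2]}v^F_{2l})$ you assert, without derivation, that the normal trace has ``the same product structure with $i=0$'', i.e.\ $\phi^F_{0l}\cdot n|_F=c_l\,\jacobi_l^{1}(\eta)$. This is not structurally identical to the $i\ge1$ case: $\nabla\times(\mathcal{N}_0 v)$ is not a cross product of two gradients, and a direct computation (as in the paper) gives $\phi^F_{0l}\cdot n|_F=\intjacobi_l^{3}(\eta)-\tfrac{1-\eta}{2}\,\jacobi_{l-1}^{3}(\eta)$, which is a combination of an integrated Jacobi and a Jacobi polynomial of weight $3$, not manifestly a single Jacobi polynomial of weight $1$. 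Your claimed identity $\intjacobi_l^{3}(\eta)-\tfrac{1-\eta}{2}\,\jacobi_{l-1}^{3}(\eta)=\jacobi_l^{1}(\eta)$ is in fact true (one can check it for small $l$ and prove it in general by showing the left-hand side is a degree-$l$ polynomial orthogonal to $\mathcal{P}^{l-1}$ with respect to the weight $(1-\eta)$, which requires an integration-by-parts manipulation with \eqref{eq:jacobiortho} and \eqref{eq:intjacobiortho} of exactly the kind the paper performs, or standard Jacobi contiguity relations), so your strategy does close and, once that identity is in hand, it even yields the $i=0$ versus $i\ge1$ cross-orthogonality and the $RT_0$ case \eqref{eq:normalltwoorthogonalthreedsecond} in one stroke, which is tidier than the paper's case-by-case treatment. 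But as written the key step for $\phi^F_{0l}$ is asserted rather than proven; the paper avoids the closed-form identity altogether and instead proves the needed orthogonalities for the mixed terms directly by integration by parts, so you should either supply a proof of the identity or fall back on that direct argument.
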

\vspace*{-0.25cm}
\begin{proof}
  We start with the two dimensional case and the lower edge $F = [1,2]$. For $i,j \ge 1$ the basis functions are given by  $\phi_i^{[1,2]} = \curl \left( \intjacobi_{i+1}^{0}\left(\frac{\lambda_{2} -\lambda_{1}}{\lambda_{2} + \lambda_{1}}\right) (\lambda_{2} +\lambda_{1})^{i+1} \right)$. In two dimensions the $\curl$ times the normal vector equals the tangential derivative, thus for $F$ the partial derivation with respect to $x$
  \begin{align*}
    (\phi^{[1,2]}_i \cdot n)|_F =  \partialsym{x} {\intjacobi_{i+1}}^{0}(x) = \jacobi^{0}_{i}(x),
  \end{align*} where we used that $\lambda_2+\lambda_1 = 1$ and $\lambda_2-\lambda_1 = x$ on $F$. Next note that for $i,j=0$ the normal component of the lowest order Raviart Thomas basis function $\phi^F_0$ is constant on $F$ thus equivalent to $\jacobi^{0}_0$. Using property \eqref{eq:jacobiortho} it follows
  \begin{align*}
    \int_{F} (\phi^{[1,2]}_i \cdot n) (\phi^{[1,2]}_j \cdot n) \ d {s} &= \int_{-1}^1\jacobi^{0}_{i}(x) \jacobi^{0}_{j}(x) \ d {x} = c_i \delta_{ij} \quad  \text{for} \quad 0 \le i,j \le k
  \end{align*}
  The other two edges follow analogue. For the proof in three dimensions let $F =[1,2,3]$ and start with $i \ge 1$.  The normal vector on $F$ is given by $n=(0,0,-1)$, so we have 
  \begin{align*}
    \phi_{ij}^{F}\big{|}_F \cdot n =&(-\nabla u_{i+1}^F \times \nabla v_{(i+1)(j+1)}^F ) \cdot n
    =\jacobi_{i}^{0} \Big( \frac{x}{1-y}\Big) (1-y)^{i} \jacobi_{j}^{2i+1}\left(2y-1\right)  2.
  \end{align*}
  Using a Duffy transformation from $D_1: (-1,1) \times (0,1) \rightarrow F$, with $D_1(\hat{x}, \hat{y}) = (\hat{x} (1-\hat{y}), \hat{y})$ we get
    \begin{align*}
      (\phi^{F}_{ij}& \!\cdot\! n, \phi^{F}_{lm} \!\cdot\! n)_F 
                = 4 \int_{-1}^1 \jacobi_i^{0} \left( \hat{x} \right)  \jacobi_l^{0} \left( \hat{x} \right) \! \! \ d {\hat{x}} \! \int_0^1  \jacobi_j^{2i+1}\left(2 \hat{y} -1\right)  \jacobi_m^{2l+1}\left(2 \hat{y} -1\right) (1-\hat{y})^{i+l+1}  \! \! \ d {\hat{y}}
    \end{align*}
Using a transformation $D_2: (-1,1) \rightarrow (0,1)$ for the second integral and \eqref{eq:jacobiortho} we see that the first integral vanishes for $i \neq l$ and the second integral for $i = l$, and so
    \begin{align*} 
                  (\phi^{F}_{i,j} \!\cdot \!n, \phi^{F}_{l,m} \!\cdot\! n)_F = c(i,j,l,m) \delta_{il} \delta_{jm} \quad \forall i,l \ge 1, i+j, l+m \le k.
    \end{align*}
    For $i = l =  0$ we have with $\mathcal{N}_0^{[f_1,f_2]} = \frac{1}{2} (y+z-1, -x, -x)$, thus on $F$ as $z = 0$,
    \begin{align*}
      \phi_{0j}^{F}\big{|}_F \cdot n = \nabla \times (\mathcal{N}_0^{[f_1,f_2]} v_{2j}^F ) \cdot n 
      =  ~\intjacobi_j^{3}\left(2y-1\right) - (1-y) \jacobi_{j-1}^{3}\left(2y-1\right).
    \end{align*}
    Using the Duffy transformation $D_1$ and $D_2$ we have as before
    \begin{align*}
                                 (\phi^{F}_{0j}& \! \cdot \! n, \phi^{F}_{0m}\! \cdot\! n)_F
      =   \int_{-1}^1 (1-\hat{y})  \intjacobi_j^{3}(\hat{y}) \intjacobi_m^{3}(\hat{y}) - \frac{(1-\hat{y})^2}{2}\left( \intjacobi_j^{3}(\hat{y})  \jacobi_{m-1}^{3}(\hat{y}) + \intjacobi_m^{3}(\hat{y})  \jacobi_{j-1}^{3}(\hat{y}) \right) \\
      &~~~~~~~~~~~~~~~~~~~~~~~~~~~~~~~~~ + \frac{(1-\hat{y})^3}{4} \jacobi_{j-1}^{3}(\hat{y}) \jacobi_{m-1}^{3}(\hat{y}) \ d {\hat{y}}.
    \end{align*}
    The integral of the last term is equal to $\delta_{jm}$ due to property \eqref{eq:jacobiortho}. For the first of the mixed terms we get with integration by parts and $\intjacobi_j^{3}(-1) = 0$
    \begin{align*}
      - \int_{-1}^1 \frac{(1-\hat{y})^2}{2} \intjacobi_j^{3}(\hat{y})  \jacobi_{m-1}^{3}(\hat{y}) \ d {\hat{y}} = \int_{-1}^1 \Big(-(1-\hat{y}) \intjacobi_j^{3}(\hat{y}) + \frac{(1-\hat{y})^2}{2} \jacobi_{j-1}^{3}(\hat{y}) \Big) \intjacobi_{m}^{3}(\hat{y}) \ d {\hat{y}},
    \end{align*}
 Putting all terms together we have
     \begin{align*}
           (\phi^{F}_{0j} \! \cdot \! n,\phi^{F}_{0m} \! \cdot \!n)_F = c(j,m) \delta_{jm} \quad 1 \le j,m \le k
     \end{align*}
     The orthogonality between functions with with $i = 0$ and $i \ge 1$  follows similar by using the Duffy transformation and \eqref{eq:jacobiortho}. The orthogonality with respect to the lowest order Raviart Thomas functions $\phi^{F}_{00}$ follows with the same arguments as in the two dimensional case. The other faces follow analogously.
   \end{proof} \\
   \begin{lemma} \label{lem:facetortho}
     The highest order facet basis functions from section~\ref{section:localspaces} are $L^2(\hat{T})$-orthogonal on polynomials up to degree $k-2$.
There holds
  \begin{align*}
    ( \phi_{\ast}^F , q)_{\widehat{T}} = 0 \quad  \forall q \in [\mathcal{P}^{k-2}(\widehat{T})]^d, \quad \forall F \in \partial \widehat{T}.
  \end{align*}
  for $\phi_{\ast}^F = \phi_k^F$ (in two dimensions) and $\phi_{\ast} \in \{\phi_{ij}^k\}_{i+j=k}$ (in three dimensions).
\end{lemma}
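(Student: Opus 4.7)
The plan is to write each highest-order facet basis function as $\phi_\ast^{F}=\curl\Psi$ (with $\Psi$ a scalar potential in 2D and a vector potential in 3D), apply integration by parts to convert $(\phi_\ast^F, q)_{\hat T}$ into a volume contribution and a boundary contribution on $\partial\hat T$, and finally reduce both to the one-dimensional orthogonality identity
\[
\int_{-1}^{1} \intjacobi_{k+1}^{0}(\xi)\, r(\xi)\, d\xi = 0 \qquad \forall\, r \in \mathcal{P}^{k-2}(-1,1),
\]
which follows from $\intjacobi_{k+1}^{0}(\pm 1)=0$ (the value at $+1$ being $\int_{-1}^{1}\jacobi_{k}^{0}\,d\xi = 0$ by \eqref{eq:jacobiortho} tested against a constant) together with one further integration by parts.

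In 2D, integration by parts gives $(\curl\psi,q)_{\hat T} = (\psi,\operatorname{rot} q)_{\hat T}+\int_{\partial\hat T}\psi\,(q\cdot t)\,ds$ with $\operatorname{rot} q\in \mathcal{P}^{k-3}$. The factor $(\lambda_{f_2}+\lambda_{f_1})^{k+1}$ in $\psi$ vanishes at the vertex opposite $F$, and on the two other edges the argument of $\intjacobi_{k+1}^{0}$ collapses to $\pm 1$, so $\psi$ vanishes pointwise there and only $F$ contributes to the boundary integral. On $F$ the boundary integral equals $\int_{-1}^{1}\intjacobi_{k+1}^{0}(x)(q\cdot t)(x)\,dx = 0$ by the 1D identity, since $(q\cdot t)|_F \in \mathcal{P}^{k-2}(F)$. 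For the volume term the Duffy transform $(\hat x,\hat y)\mapsto(\hat x(1-\hat y),\hat y)$ factorises $\psi$ as $\intjacobi_{k+1}^{0}(\hat x)(1-\hat y)^{k+1}$; expanding $\operatorname{rot} q$ in these coordinates, every resulting summand reduces the $\hat x$-integration to $\int_{-1}^{1}\intjacobi_{k+1}^{0}(\hat x)\,\hat x^{i}\,d\hat x$ with $i\le k-3$, which vanishes by the 1D identity.

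In 3D the potential is vector-valued ($\Psi=v_{(i+1)(j+1)}^{F}\nabla u_{i+1}^{F}$ for $i\ge 1$ and $\Psi=\mathcal{N}_{0}^{[f_{1},f_{2}]}v_{2k}^{F}$ for $i=0$) and integration by parts yields the boundary term $\int_{\partial\hat T}(\Psi\times q)\cdot n\,ds$, which only probes the tangential part $n\times\Psi$. On each face $F'\neq F$ this tangential part vanishes through one of three mechanisms: (a) $u_{i+1}^{F}|_{F'}=0$ via $\intjacobi_{i+1}^{0}(\pm 1)=0$, forcing the surface gradient, and hence the tangential part of $\nabla u_{i+1}^{F}$, to be zero; (b) on the face opposite $V_{f_{i}}$ with $i\in\{f_{1},f_{2}\}$, $\mathcal{N}_{0}^{[f_{1},f_{2}]}$ reduces to $\lambda_{f_{j}}\nabla\lambda_{f_{i}}$ and is therefore parallel to $n_{F'}$, so $n_{F'}\times\Psi=0$; (c) on the face opposite $V_{f_{3}}$, $v_{2k}^{F}$ carries a factor $\intjacobi_{k}^{3}(-1)=0$. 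The boundary integral on $F$ and the volume integral are then reduced, via 2D and 3D Duffy-type transformations respectively, to sums of tensor products of 1D integrals whose crucial factor is of the form $\int_{-1}^{1}\intjacobi_{\ast}(\hat x)\,g(\hat x)\,d\hat x$ with $\deg g$ small enough that \eqref{eq:jacobiortho} or \eqref{eq:intjacobiortho} forces each summand to zero.

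The main obstacle is the case distinction and bookkeeping in 3D: tracking which of the three cancellation mechanisms applies on each face $F'\ne F$, handling $\phi_{0k}^{F}$ separately because of its vector (rather than gradient) potential, and verifying that after the appropriate Duffy transformations the degrees of the remaining Jacobi-polynomial factors are low enough for the weighted orthogonality relations \eqref{eq:jacobiortho}/\eqref{eq:intjacobiortho} to close the argument for each summand.
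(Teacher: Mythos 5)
Your proposal is correct in substance but organized quite differently from the paper's proof, and the comparison is instructive. The paper never integrates by parts on the element: it takes the explicit component formulas of $\phi^F_k$ (2D) and of $\phi^F_{ij}$, $\phi^F_{0k}$ (3D), pairs them directly with monomials $x^m y^n z^r$, $m+n+r\le k-2$, and grinds each component down via the Duffy maps, one-dimensional integrations by parts in $\hat y$, and the orthogonality relations \eqref{eq:jacobiortho}--\eqref{eq:intjacobiortho}. You instead exploit the potential structure $\phi^F_\ast=\curl\Psi$ globally: moving the curl onto $q$ lowers the volume test degree to $k-3$ and avoids the product-rule expansion of $\curl\Psi$, at the price of boundary terms, which you correctly kill on the faces $F'\neq F$ by the vanishing of the tangential trace of $\Psi$ (your mechanisms (a)--(c) are the right ones; note only that (c) as stated covers $v^F_{2k}$, i.e.\ the $i=0$ family, whereas for $i\ge 1$ the same role is played by $\intjacobi_{j+1}^{2i+1}(-1)=0$ in $v^F_{(i+1)(j+1)}$ on the face opposite $V_{f_3}$ --- mechanism (a) alone handles only the two faces opposite $V_{f_1},V_{f_2}$). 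Your 2D argument is complete and genuinely simpler than the paper's: the edge term on $F$ and the volume term both collapse to $\int_{-1}^1\intjacobi_{k+1}^0\,r=0$ for $r\in\mathcal{P}^{k-2}$, which you justify correctly. In 3D, however, the remaining volume term $(\Psi,\curl q)_{\widehat T}$ and the face term on $F$ are exactly where the paper invests most of its effort (Duffy transform, extraction of the weight $(1-\hat y)^{2i+1}$, an extra integration by parts in $\hat y$ for the borderline exponents), and your sketch only asserts that ``the degrees are low enough''; the index bookkeeping you flag as the main obstacle is real --- e.g.\ when the $\hat x$-integral survives only marginally ($m=i-1$ for the $\intjacobi_{i+1}^0$ factor) one cannot immediately pull out the full weight $(1-\hat y)^{2i+1}$ and needs the same 1D integration-by-parts device the paper uses --- so this part still has to be carried out to call the proof finished, although nothing in it appears harder than the computations the paper performs without the element-level integration by parts.
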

\begin{proof}
  We start with the two dimensional case and the lower edge $F = [1,2]$. The highest order edge basis functions is given by $\phi_k^{[1,2]} = \curl ( \intjacobi_{k+1}^{0} ( \frac{x}{1-y}) (1-y)^{k+1} )$, thus
  \begin{align*}
\phi_k^{[1,2]} = \left(\jacobi_{k}^0 ( \frac{x}{1-y}) x (1-y)^{k-1} - \intjacobi_{k+1}^0( \frac{x}{1-y}) (k+1) (1-y)^{k}, - \jacobi_{k}^0( \frac{x}{1-y}) (1-y)^{k} \right). 
  \end{align*}
Using a monomial basis for $\mathcal{P}^{k-2}(\widehat{T})$, so $x^my^n$ with $m+n \le k-2$, we get for the first component
\begin{align*}
  \int_{\widehat{T}} (\phi_k^F)_1 &x^my^n \ d {(x,y)} \\
  &= \int_{\widehat{T}} \left(\jacobi_{k}^0 ( \frac{x}{1-y}) x (1-y)^{k-1} - \intjacobi_{k+1}^0( \frac{x}{1-y}) (k+1) (1-y)^{k} \right) x^my^n \ d {(x,y)}.
\end{align*}
With the Duffy transformation $D_1$ as in the proof of Lemma \ref{lem:normalltwoorthogonal} we get 
\begin{align*}
  \int_{\widehat{T}} &(\phi_k^F)_1 x^my^n \ d {(x,y)} 
                     =\int_{-1}^1 \jacobi_{k}^0( \hat{x}) \hat{x}^{m+1} \ d {\hat{x}} \int_0^1 (1-\hat{y})^{k+m+1} \hat{y}^n \ d {\hat{y}} \\
  & ~~~~~~~~~~~~~~~~~~~~~~~~~~~~~~~~- (k+1) \int_{-1}^1\intjacobi_{k+1}^0( \hat{x}) \hat{x}^{m} \ d {\hat{x}}  \int_0^1 (1-\hat{y})^{k+m+1} \hat{y}^n   \ d {\hat{y}} = 0,
\end{align*}
due to $m \le k-2$ and equations \eqref{eq:jacobiortho} and \eqref{eq:intjacobiortho}. For the second component we proceed similar. For the proof in the three dimensional case let $F =[1,2,3]$ and start with $i \ge 1$ such that $i+j = k$. The first component of $-\phi_{ij}^{F} = \nabla u_{i+1}^F \times \nabla v_{(i+1)(j+1)}^F $ is given by
\begin{align*}
  (-\phi_{ij}^F)_1 =& \partial_y u^F_{(i+1)} \partial_z v_{(i+1)(j+1)}^F - \partial_z u^F_{(i+1)} \partial_y v_{(i+1)(j+1)}^F =a c - b c,
\end{align*}
with $\partial_y u^F_{(i+1)} = \partial_z u^F_{(i+1)} = a-b$ and $c = \partial_z v_{(i+1)(j+1)}^F - \partial_y v_{(i+1)(j+1)}^F$, thus
\begin{align*}
  a &:= \jacobi_i^0\Big(\frac{x}{1-y-z}\Big) x (1-y-z)^{i-1} \qquad   b := \intjacobi_{i+1}^{0}\Big(\frac{x}{1-y-z}\Big)(i+1)(1-y-z)^i
\end{align*}
and
\begin{align*}
    c& :=\Big(\jacobi_j^{2i+1}\Big(\frac{2y+z-1}{1-z}\Big) 2y (1-z)^{j-1} - \intjacobi_{j+1}^{2i+1}\Big(\frac{2y+z-1}{1-z}\Big) (j+1) (1-z)^{j} \\
& ~~~~~~~~~~~~~~~~~~~~~~~~~~~~~~~~~~~~~~~~~~~~~~~~~~~~~~~~~~~~~~~~ - \jacobi_j^{2i+1}\Big(\frac{2y+z-1}{1-z}\Big) 2 (1-z)^{j}\Big).
\end{align*}
Using a Duffy transformation $D_3: (-1,1) \times (0,1) \times (0,1) \rightarrow {\widehat{T}}$, with $D_3(\hat{x}, \hat{y}, \hat{z}) = (\hat{x} (1-\hat{y})(1-\hat{z}), \hat{y}(1-\hat{z}), \hat{z})$ we get for the integral of $ac$ multiplied with a monome $x^m y^nz^r$ with $m+n+r \le k-2$
\begin{align*}
  \int_{\widehat{T}} &ac~ x^m y^nz^r \ d {(x,y,z)}
= \int_{-1}^1 \jacobi_i^0(\hat{x})\hat{x}^{m+1}  \ d \hat{x} \int_{0}^1 (1-\hat{z})^{i+j+m+n+2}\hat{z}^r  \ d \hat{z}\\
  &~~~~~~~~~~~~~\cdot \int_0^1 - (1 - \hat{y})^{i+m+1} \hat{y}^n\left(2 (1 - \hat{y}) \jacobi_j^{2i+1}(2 \hat{y} -1) +  (j+1) \intjacobi_{j+1}^{2i+1}(2 \hat{y} -1) \right) \ d \hat{y}.
\end{align*}
For $m \le i-2$ the integral with respect to $\hat{x}$ vanishes, so it remains the case ${m>i-2} \Leftrightarrow i \le m + 1$. For the integral with respect to $\hat{y}$ we get using integration by parts
\begin{align*}
  &\int_0^1 - (1 - \hat{y})^{i+m+1} \hat{y}^n\left(2 (1 - \hat{y}) \jacobi_j^{2i+1}(2 \hat{y} -1) +  (j+1) \intjacobi_{j+1}^{2i+1}(2 \hat{y} -1)\right) \ d \hat{y} \\
    &=\int_0^1 \! - 2(1 - \hat{y})^{i+m+2} \hat{y}^n \jacobi_j^{2i+1}(2 \hat{y} -1) \!\ d \hat{y} \! - \! \! \int_0^1 (j+1) (1 - \hat{y})^{i+m+1} \hat{y}^n \intjacobi_{j+1}^{2i+1}(2 \hat{y} -1) \ d \hat{y} \\
  &=\int_0^1 - 2(1 - \hat{y})^{i+m+2} \hat{y}^n \jacobi_j^{2i+1}(2 \hat{y} -1)\ d \hat{y}  \\
&~~~~~~~~~~~~-  \frac{j+1}{i+l+2} \int_0^1 (1 - \hat{y})^{i+m+2}(2 \hat{y}^n \jacobi_j^{2i+1}(2 \hat{y} -1) + \intjacobi_{j+1}^{2i+1}(2 \hat{y} -1)n \hat{y}^{n-1}) \ d \hat{y}.
\end{align*}
Now as $2i +1 \le i + m + 2$ and $(i+m+2) - (2i + 1) + n \le j -1$ we can write $ \hat{y}^n(1 - \hat{y})^{i+m+2} = (1 - \hat{y})^{2i+1} w(\hat{y})$ where $w$ is polynomial of order less or equal $j-1$. By this the integrals of the Jacobian polynomials vanish due to \eqref{eq:jacobiortho}, and with a similar argument also the integral of the integrated Jacobian polynomial vanish due to \eqref{eq:intjacobiortho}. Using the same techniques one shows that also the integral $\int_{\widehat{T}} bc~ x^m y^nz^r \ d {(x,y,z)}$ vanishes what leads to the orthogonality of the first component $(\phi_{ij}^F)_1$. In a similar way one proves the orthogonality also for the other two components. It remains the proof for the face based basis functions with $i=0$, thus $\phi_{0k}^{F} := \nabla \times (\mathcal{N}_0^{[f_1,f_2]} v_{2k}^F )$. We start with the second component
\begin{align*}
  (\phi_{0k}^{F})_2 &
                    = \intjacobi^3_k(\frac{2y+z-1}{1-z}) (1-z)^k + (y+z-1) \jacobi^3_{k-1}(\frac{2y+z-1}{1-z}) y (1-z)^{k-2} \\
  & ~~~~~~~~~~~~~~~~~~~~~~~~- \frac{k}{2}(y+z-1) \intjacobi^3_k(\frac{2y+z-1}{1-z}) (1-z)^{k-1}.
\end{align*}
Again using the Duffy transformation $D_3$ and the monome $x^m y^nz^r$ with $m+n+r \le k-2$ we get
\begin{align*}
  \int_{\widehat{T}} & (\phi_{0k}^{F})_2~ x^m y^nz^r \ d {(x,y,z)}= \int_{-1}^1 \hat{x}^m \ d \hat{x} \int_0^1 (1-\hat{z})^{k+m+n+2}\hat{z}^r \ d \hat{z} \\
                                                                 &\cdot \Big( -\int_0^1 \hat{y}(1-\hat{y})^{m+2}\hat{y}^{n}  \jacobi^3_{k-1}(2 \hat{y} -1)   \ d \hat{y}+ \int_0^1 (1-\hat{y})^{m+1}\hat{y}^n \intjacobi^3_k(2 \hat{y} -1)  \ d \hat{y} \\
                                                                 &~~~~~~~~~~~~~~~~~~~~~~~~~~~~~~~~~~~~~~~~~~~~~~~~~~~~~~- \frac{k}{2} \int_0^1 (1-\hat{y})^{m+2}\hat{y}^n \intjacobi^3_k(2 \hat{y} -1)    \ d \hat{y} \Big).
\end{align*}
For $m=1$ the integral with respect to $\hat{x}$ vanishes, and for $m>1$ all integrals with respect to $\hat{y}$ vanish due to \eqref{eq:jacobiortho} and \eqref{eq:intjacobiortho}. For $m = 0$ we proceed similar as for the proof of the first type of face based basis functions. First apply integration by parts for the second integral with respect to $\hat{y}$
\begin{align*}
  \int_0^1 (1-\hat{y})\hat{y}^n &\intjacobi^3_k(2 \hat{y} -1)  \ d \hat{y} =  \int_0^1 \frac{(1-\hat{y})^{2}}{2} \Big(n \hat{y}^{n-1} \intjacobi^3_k(2 \hat{y} -1) + \hat{y}^n \jacobi^3_{k-1}(2 \hat{y} -1) \Big)  \ d \hat{y}.
\end{align*}
Adding up all integrals then gives
\begin{align*}
  \int_{\widehat{T}} (\phi_{0k}^{F})_2~ x^m y^nz^r & \ d {(x,y,z)}
  \\
  =&-\int_0^1 \hat{y}(1-\hat{y})^{2}\hat{y}^{n}  \jacobi^3_{k-1}(2 \hat{y} -1) \ d \hat{y} + \int_0^1 (1-\hat{y})^{2}\hat{y}^{n}  \jacobi^3_{k-1}(2 \hat{y} -1) \ d \hat{y} \\
 & - \frac{k}{2} \int_0^1 (1-\hat{y})^{2}\hat{y}^n \intjacobi^3_k(2 \hat{y} -1)    \ d \hat{y} + \frac{n}{2} \int_0^1 (1-\hat{y})^{2}\hat{y}^{n-1} \intjacobi^3_k(2 \hat{y} -1)    \ d \hat{y}.
\end{align*}
The first two integrals can be summed up leading to a coefficient $(1-\hat{y})^{3}$, thus vanish due to \eqref{eq:jacobiortho}. For the other two terms one uses integration by parts once again and uses \eqref{eq:intjacobiortho} to finally conclude the orthogonality of the second component. For the other two components $(\phi_{0k}^{F})_1$ and $(\phi_{0k}^{F})_3$ one proceeds similar, what finishes the proof.
\end{proof}


\subsection{Construction of the global FE Spaces} \label{section:globalspaces}
As usual with $hp$ finite elements, we associate element basis functions in $\HdivSpace$ (or $\HdivSpaceHODC$) with facets or cells. Cell type basis functions have zero normal trace while the normal trace of the basis functions associated with one facet span the polynomial space up to order $k$. The novel choice of the element basis functions is such that their normal component form a hierarchical $L^2$ orthogonal basis on facets. In three dimensions facet basis functions depend on indices $i,j$, i.e. $\phi^F_{ij}$ (for the two-dimensional case we remove the index $j$, i.e. $\phi^F_i$), and we have
\begin{align*}
  \hull(\{ \phi^F_{ij} \cdot n : i+j \le k \} ) = \mathcal{P}^k(F) \quad \text{and} \quad 
  \hull(\{ \phi^F_{ij} \cdot n : i+j = k \} ) \perp \mathcal{P}^{k-1}(F).
\end{align*}
This follows due to the hierarchical structure of integrated Jacobi polynomials and Lemma \ref{lem:normalltwoorthogonal} on the reference element. By applying the Piola transformation this property carries over to the physical elements in the mesh.
\begin{remark}
  The normal trace of the Piola mapped basis functions are biorthogonal to trivially mapped basis functions, also on curved elements.
\end{remark}\\
To obtain the different spaces $\HdivSpace$ and $\HdivSpaceHODC$ we use the same local basis functions but put them together differently to define the global functions. 
For the construction of $\HdivSpace$ we associate all the degrees of freedom to facet basis functions $\phi_{ij}^F$ with mesh facets to obtain normal continuity. 
For the space $\HdivSpaceHODC$ with relaxed $H(\divergence)$-conformity we only associate degrees of freedom with $i + j < k$ with facets. The remaining degrees of freedom with $i+j = k$ are associated with cells and are treated as locally (on only one cell) supported functions.
This means that the every facet basis functions in $\HdivSpace$ with $i+j = k$ is split up into two (locally supported) basis functions: For a facet $F\in\facets$ and $i+j=k$ the basis function $ \phi_{ij}^F $ with $\operatorname{supp}(\phi_{ij}^F) = T \cup T'\in\mesh$ is replaced with the basis functions $\phi_{ij}^{F,T} = \phi_{ij}^{F}|_T$ and $\phi_{ij}^{F,T'} = \phi_{ij}^{F}|_{T'}$ which have $\operatorname{supp}(\phi_{ij}^{F,T}) = T$ and $\operatorname{supp}(\phi_{ij}^{F,T'}) = T'$.
We note that the new functions $\phi_{ij}^{F,T}$ and $\phi_{ij}^{F,T'}$ (with non-zero normal trace on $F$) can be treated as interior bubble functions.
From this construction we observe that a realization of the non-standard space $\HdivSpaceHODC$ is not more difficult than the construction of $\HdivSpace$.

The split-up functions form bases for the local and global spaces
\begin{equation}\label{eq:wstarspace}
  \HdivSymb^\ast_T := \bigoplus_{F\in \partial T} \hull(\{ \phi^{F,T}_{ij} : i+j = k \} ), \quad T\in\mesh, \qquad   \HdivSymb^\ast_h := \bigoplus_{T\in\mesh} \HdivSymb^\ast_T.
\end{equation}
Note that Lemma \ref{lem:facetortho} implies that
\begin{equation}
  \int_{\Omega} \phi \cdot q d x = 0 \quad  \text{ for all } \phi \in \HdivSymb^\ast_h,~ \text{ for all } ~q \in \bigoplus_{T\in\mesh} [\mathcal{P}^{k-2}(T)]^d.
\end{equation}

\subsection{A simple and fast realization of the BDM interpolation} \label{hdgnse:stokes:reconstruction:average}
Due to the different association of the degrees of freedom of facet based basis functions (see section \ref{section:globalspaces}) a discontinuity of the normal component of a function $\HdivVar \in \HdivSpaceHODC$ only appears in the highest order components.
The idea of the reconstruction operator $\ReconHdiv$ is to exploit the origin of this discontinuity in the basis functions. It simply reverts the break-up of one basis function into two by applying an average on each facet of the corresponding coefficients. Let $F \in \facets$ be an arbitrary facet with $T, T' \in \mesh$ such that $F =  T \cap T'$. In three dimensions we have the local representation (in two dimensions we again remove the index $j$) of the normal components
\begin{align*}
  (\HdivVarT \cdot n)|_F &= \sum\limits_{i+j < k} \alpha_{ij} \phi_{ij}^F \cdot n + \sum\limits_{i+j =  k} \alpha^{T\hphantom{'}}_{ij}  \phi_{ij}^{F,T\hphantom{'}} \cdot n \\
  (\HdivVarTT \cdot n)|_F &= \sum\limits_{i+j < k} \alpha_{ij} \phi_{ij}^F \cdot n + \sum\limits_{i+j =  k} \alpha^{T'}_{ij}  \phi_{ij}^{F,T'} \cdot n,
\end{align*}
and its jumps
$\jumpDG{ \HdivVar \cdot n } = \sum_{i+j =  k} (\alpha^{T}_{ij} - \alpha^{T'}_{ij})  \phi_{ij}^{F}\cdot n$.
Here $\alpha_{ij}, \alpha_{ij}^{T}$ and $\alpha_{ij}^{T'}$ are given coefficients, and $\phi_{ij}^{F,T}, \phi_{ij}^{F,T'} $ are the highest order basis functions associated to the elements $T$ and $T'$. Note that $ \phi_{ij}^{F}\cdot n = \phi_{ij}^{F,T}\cdot n = \phi_{ij}^{F,T'}\cdot n$ on $F$ for $i+j=k$. With $\overline{\alpha}_{ij} = \frac{1}{2} (\alpha^{T}_{ij} + \alpha^{T'}_{ij})$ we define the reconstruction operation $\ReconHdiv \HdivVar$ such that for $\tilde{T} \in \{ T, T' \}$ we have
\begin{align} \label{eq:aver}
(\ReconHdiv \HdivVar|_{\tilde{T}} \cdot n)|_F := \!\!\sum\limits_{i+j < k} \alpha_{ij} \phi_{ij}^F \cdot n+ \!\!\sum\limits_{i+j =  k} \overline{\alpha}_{ij} \phi_{ij}^{F}\!\!\cdot n.
\end{align}
In the reconstruction we thus only apply this averaging which only affects the highest order facet degrees of freedom. Hence, the reconstruction can also be characterized as a perturbation in the space $\HdivSymb^\ast_h$.
In Figure \ref{fig:aver} we see a sketch of the averaging on one facet of two neighboring elements.
\begin{figure}[ht]
  {
    \begin{center}
      \begin{tikzpicture}
        \node at (0,0) {\includegraphics[width=0.4\textwidth]{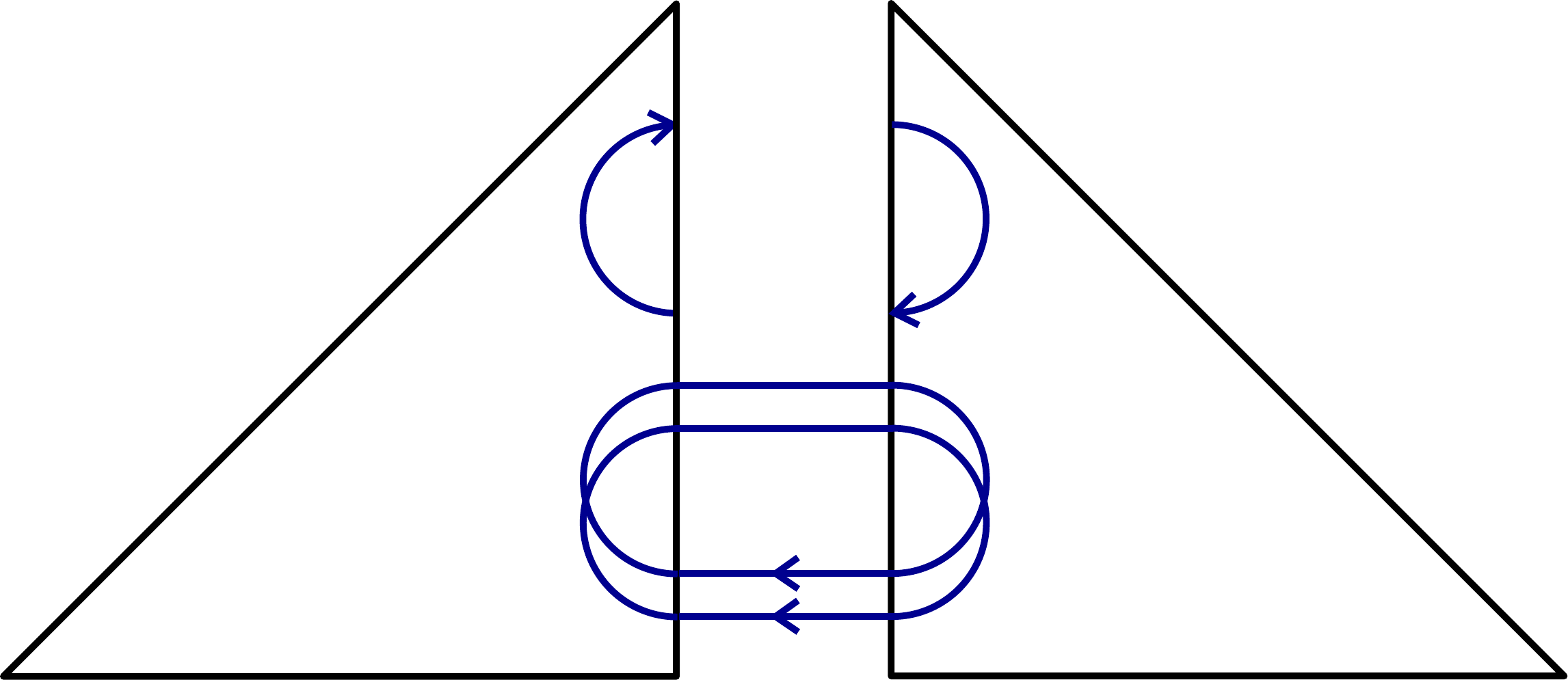}};
        \node at (-1.5,-0.7) {$T$};
        \node at (1.5,-0.7) {$T'$};         
        \draw[->] (2.5,0.2)[bend left=00] to node[above] {$\ReconHdiv$}  (5,0.2);
        \node at (7.6,0) {      \includegraphics[width=0.4\textwidth]{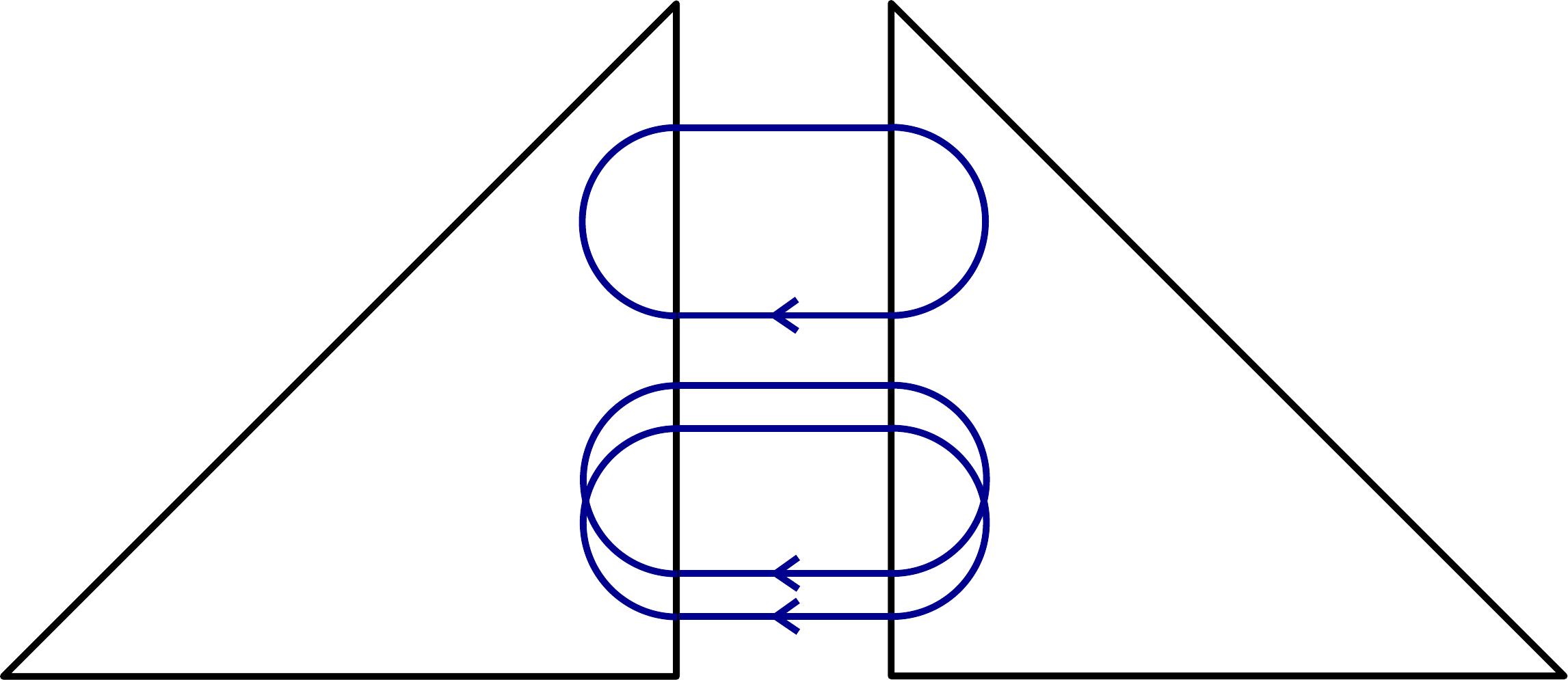}};
        \node at (6.5,-0.7) {$T$};
        \node at (9.5,-0.7) {$T'$};
        \node at (-2,1.5) {$\alpha^{T}_{ij} \phi_{ij}^{F,T}$};
        \draw[->] (-2,1.1)[bend right=30] to (-0.7,0.4);
        \draw[->] (2,1.1)[bend left=30] to (0.7,0.4);
        \node at (2,1.5) {$\alpha^{T'}_{ij} \phi_{ij}^{F,T'}$};
        \node at (6,1.5) {$\overline{\alpha}_{ij}  ( \phi_{ij}^{F,T} + \phi_{ij}^{F,T'}) = \overline{\alpha}_{ij} \phi_{ij}^{F}$};
        \draw[->] (6,1.1)[bend right=30] to (7.3,0.4);
        
      \end{tikzpicture}
    \end{center}
    \caption{In the left picture we see two highest order facet functions $\phi_{ij}^{F,T}$ and $\phi_{ij}^{F,T'}$ and the continuous low order facet functions of two neighboring triangles $T$ and $T'$. The reconstruction $\ReconHdiv$ uses the average value of the corresponding coefficients to remove the normal discontinuity in the highest order, see on the right.}
  } 
  \label{fig:aver}
\end{figure}
\begin{lemma}
The reconstruction operator $\ReconHdiv$ defined by the averaging \eqref{eq:aver} fulfills Assumption \ref{ass:recon}. 
\end{lemma}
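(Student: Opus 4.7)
The plan is to verify the four parts of Assumption \ref{ass:recon} in turn, exploiting the observation that the perturbation $\delta := \ReconHdiv\HdivTest - \HdivTest$ lies in $\HdivSymb^\ast_h$ and that on each facet $F$ its normal trace equals $\tfrac12\jumpDG{\HdivTest\cdot n}$. For condition \eqref{eq:ass1}, the degrees of freedom with $i+j<k$ are already globally coupled and thus contribute continuously to $\HdivTest\cdot n$ across facets; the averaging \eqref{eq:aver} simply replaces the two disjoint highest-order coefficients $\alpha^T_{ij},\alpha^{T'}_{ij}$ with $i+j=k$ by the common value $\overline{\alpha}_{ij}$, so together with $\phi^{F,T}_{ij}\cdot n = \phi^{F,T'}_{ij}\cdot n$ on $F$ we obtain $\ReconHdiv\HdivTest\in\HdivSpace$.

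Conditions \eqref{eq:ass2a} and \eqref{eq:ass2b} are direct consequences of the orthogonality properties of the basis. For \eqref{eq:ass2a}, the normal trace $\delta\cdot n|_F$ is a linear combination of $\{\phi^F_{ij}\cdot n : i+j=k\}$, and by Lemma \ref{lem:normalltwoorthogonal} these highest-order normal traces are $L^2(F)$-orthogonal to all lower-order normal traces, which span $\mathcal{P}^{k-1}(F)$. Condition \eqref{eq:ass2b} follows at once from $\delta|_T\in\HdivSymb^\ast_T$ together with Lemma \ref{lem:facetortho}, transferred to the physical element via the Piola map.

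The main obstacle is the stability estimate \eqref{eq:ass3}. My plan is to prove the local patch bound $\|\nabla\delta\|_T \lesssim \|\nabla\HdivTest\|_{\omega_T}$, where $\omega_T$ denotes $T$ together with its face-neighbors, and a matching bound for the additional contribution to the tangential-jump term. Three ingredients will enter. First, norm equivalence on the finite-dimensional local space $\HdivSymb^\ast_T$ together with Piola scaling yields $\|\delta\|_T^2 \lesssim h\sum_{F\subset\partial T}\|\jumpDG{\HdivTest\cdot n}\|_F^2$. Second, since $\HdivTest\in\HdivSpaceHODC$ this jump lies in the complement of $\mathcal{P}^{k-1}(F)$, and a Poincar\'e-type argument --- subtracting an element constant, which leaves $(I-\facetproj^{k-1})(\HdivTest\cdot n)$ invariant, and then invoking the standard trace inequality --- gives $\|(I-\facetproj^{k-1})p\|_{\partial T} \lesssim h^{1/2}\|\nabla p\|_T$ for $p\in\mathcal{P}^k(T)$, hence $\|\jumpDG{\HdivTest\cdot n}\|_F \lesssim h^{1/2}\|\nabla\HdivTest\|_{\omega_F}$ on each face. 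Combining these produces $\|\delta\|_T \lesssim h\|\nabla\HdivTest\|_{\omega_T}$, and a polynomial inverse inequality upgrades this to $\|\nabla\delta\|_T\lesssim\|\nabla\HdivTest\|_{\omega_T}$. A discrete trace inequality bounds the extra tangential contribution by $h^{-1}\|\facetproj^{k-1}\delta^t\|_F^2 \lesssim h^{-2}\|\delta\|_T^2 \lesssim \|\nabla\HdivTest\|_{\omega_T}^2$. Summation over $T$ using shape regularity of $\mesh$ then closes the estimate $\brokenHnorm{\Recon\VelTest}\lesssim\brokenHnorm{\VelTest}$.
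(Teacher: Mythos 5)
Your proposal is correct and follows essentially the paper's own argument: \eqref{eq:ass1}, \eqref{eq:ass2a}, \eqref{eq:ass2b} by construction plus Lemmas \ref{lem:normalltwoorthogonal} and \ref{lem:facetortho}, and for \eqref{eq:ass3} the same reduction to the perturbation $\Recon\VelTest-\VelTest$ lying in $\HdivSymb^\ast_h$, finite-dimensional scaling on $\HdivSymb^\ast_T$, and the Bramble--Hilbert bound $h^{-1}\Vert(\id-\facetproj^{k-1})(\HdivTestT\cdot n)\Vert_{\partial T}^2\lesssim\Vert\nabla\HdivTestT\Vert_T^2$. Your explicit chain of inverse and discrete trace inequalities with patch bookkeeping over $\omega_T$ merely spells out what the paper compresses into the equivalence of $\localnorm{\cdot}$, $\Vert\nabla\cdot\Vert_T$ and $\localnormII{\cdot}$ on $\HdivSymb^\ast_T\times\{0\}$ followed by a global sum, so the substance is identical.
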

\begin{proof} \eqref{eq:ass1} follows by construction, \eqref{eq:ass2a} follows directly from Lemma \ref{lem:normalltwoorthogonal} and \eqref{eq:ass2b} from Lemma \ref{lem:facetortho}. It remains to show \eqref{eq:ass3}.
Let $\facets(T)$ denote the set of facets surrounding $T \in \mesh$ and $\localnorm{\cdot}$ be the element-local version of $\brokenHnorm{\cdot}$ with
  $
  \localnorm{\VelTest}^2 := \Vert \nabla \HdivTestT \Vert_T^2 + \frac{1}{h} \Vert \facetproj^{k-1} \facetjump{\VelTest^t} \Vert_{\partial T}^2.
  $
We have
  \begin{equation}
    \localnorm{ \Recon \VelTest } \leq \localnorm{w_h} + \localnorm{ \VelTest },
  \end{equation}
  with $w_h = (w_{\mathcal{T}},w_{\mathcal{F}}) := \Recon \VelTest - \VelTest$ and notice that $w_T \in \HdivSymb_T^\ast$, $w_\mathcal{F} = 0$, cf. \eqref{eq:wstarspace}.
  With transformation to the reference element and equivalence of finite dimensional norms, one easily shows that the norms $\localnorm{\cdot}$, $\Vert \nabla \cdot \Vert_T$ and $\localnormII{\cdot}$ with 
  $
  \localnormII{ \VelTest }^2 := \frac{1}{h} \Vert (\id - \facetproj^{k-1}) (\HdivTestT \cdot n) \Vert_{\partial T}^2
  $
  are equivalent on $\HdivSymb_T^\ast \times \{0\}$ with constants independent of $h$. Hence, 
  $$
  \localnorm{ \Recon \VelTest }^2 \lesssim  \localnormII{ w_h }^2 + \localnorm{\VelTest}^2 \quad  \Longrightarrow \quad 
  \brokenHnorm{\Recon \VelTest}^2  \lesssim  \brokenHnorm{\VelTest}^2 + \sumoverallelements \localnormII{ w_h }^2.
  $$
  Due to the averaging in \eqref{eq:aver} we have
  \begin{align*}
    \sumoverallelements \localnormII{ w_h }^2
    & = \sumoverallelements \frac{1}{h} \Vert (\id - \facetproj^{k-1}) (w_T \cdot n) \Vert_{\partial T}^2
      \lesssim 
      \sumoverallelements \frac{1}{h} \Vert (\id - \facetproj^{k-1}) (\HdivTestT \cdot n) \Vert_{\partial T}^2. 
  \end{align*}
  With a simple Bramble-Hilbert argument we obtain $\frac{1}{h} \Vert (\id - \facetproj^{k-1}) (\HdivTestT \cdot n) \Vert_{\partial T}^2 \lesssim \Vert \nabla \HdivTestT \Vert_{T}^2$
  from which we conclude \eqref{eq:ass3}.
\end{proof}

\section{A priori error analysis} \label{sec:errorana}
In this section we consider the a priori analysis of the discretizations \eqref{eq:discstokes}, \eqref{eq:probuststokes}, both with and without a subsequent application of the reconstruction operator $\Recon$ that fulfills Assumption \ref{ass:recon}.
\subsection*{Preliminaries}
In order to compare discrete velocity functions $\VelVar = (\HdivVar,\FacetVar) \in U_h$ with functions $u \in \Ureg := [H_0^1(\Omega) \cap H^2(\mesh)]^d$ we identify (with abuse of notation) $u$ with the tuple $(u,u|_{\mathcal{F}})$ where $u|_{\mathcal{F}}$ is to be understood in the usual trace sense (which is unique due to the $H^1(\Omega)$ regularity). For the purpose of the analysis it is convenient to introduce the big bilinearform for the saddle point problem in \eqref{eq:discstokes} for $(\VelVarEx,\PressureVarEx), (\VelTestEx,\PressureTestEx) \in (\VelSpace  + \Ureg) \times L^2(\Omega)$:
\begin{align}
  \bilinearform{K}((\VelVarEx,\PressureVarEx),(\VelTestEx,\PressureTestEx)) & := \bilinearform{A}(\VelVarEx,\VelTestEx) +  \bilinearform{B}(\VelVarEx,\PressureTestEx) + \bilinearform{B}(\VelTestEx,\PressureVarEx). 
\end{align}
On top of the discrete norms for $\VelVar \in \VelSpace$ and $\PressureVar \in \PressureSpace$, cf. \eqref{eq:discretenorms}, we introduce the following stronger norms for $\VelVarEx \in \VelSpace + \Ureg$ and $\PressureVarEx \in L^2(\Omega)$:
\begin{align*}
  \brokenHnormII{\VelVarEx}^2
  \!:= \! \brokenHnorm{\VelVarEx}^2 + \sumoverallelements h \Vert \nabla \HdivVarEx \Vert_{\partial T}^2, \qquad \brokenHnormII{(\VelVarEx,\PressureVarEx)} 
  \!:= \! \sqrt{\nu} \brokenHnormII{\VelVarEx} + \frac{1}{\sqrt{\nu}}\Vert \PressureVarEx \Vert_{L^2(\Omega)},\ 
\end{align*}
These stronger norms $\brokenHnormII{\cdot}$ allow to control the normal derivatives also for the exact solution.

\subsection{Analysis of the basic method \eqref{eq:discstokes}}
In this subsection we take a closer look at the analysis of the \emph{basic} discretization method \eqref{eq:discstokes}, cf. section \ref{sec:hdgstokes}. The analysis follows standard Strang-type arguments based on consistency,  continuity, inf-sup stability (induced by coervitiy of $\bilinearform{A}(\cdot,\cdot)$ and LBB-stability of $\bilinearform{B}(\cdot,\cdot)$).

%
\begin{lemma}[Consistency] \label{lem:consistency}
  Let $(\VelVarEx, \PressureVarEx) \in \Ureg \times L^2_0(\Omega)$ be the solution to the Stokes equation \eqref{eq:stokes}.
  There holds for $(\VelTest,\PressureTest) \in \VelSpace \times \PressureSpace$
  \begin{align}
    \bilinearform{K}((\VelVarEx, \PressureVarEx),(\VelTest,\PressureTest))
    & = \int_{\Omega} f \, \HdivTest \, dx -  \ConDiff(\VelVarEx, \PressureVarEx, \VelTest), \\
\text{ with } \quad       \ConDiff(\VelVarEx, \PressureVarEx, \VelTest)
    & := \sum_{T\in\mesh} \int_{\partial T} (\id - \facetproj^{k-1}) (- \nu \frac{\partial \VelVarEx}{\partial n} + \PressureVarEx n) \cdot (\id - \facetproj^{k-1}) \HdivTestT \, ds.
    \end{align}
    For $(\VelVarEx, \PressureVarEx) \in  [H^1(\Omega)]^d \cap [H^l(\mesh)]^d \times H^{l-1}(\mesh),~l \ge 2$ and $m = \min(k,l-1)$ we get
    \begin{align} \label{eq:consistencyerror}
    \ConDiff(\VelVarEx, \PressureVarEx, \VelTest)
     \le h^m \left( \sqrt{\nu} \|  \VelVarEx \|_{H^{m+1}(\mesh)} + \frac{1}{\sqrt{\nu}} \|  \PressureVarEx \|_{H^{m}(\mesh)} \right) \,\sqrt{\nu}\, \brokenHnormleft v_h \brokenHnormright.
  \end{align}
\end{lemma}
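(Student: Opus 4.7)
The plan is first to establish the identity by element-wise integration by parts and the strong Stokes equations, and then to bound $\ConDiff$ via approximation properties of the facet projection combined with trace inequalities.

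For the identity, I would integrate by parts the volume terms in $\bilinearform{A}(\VelVarEx, \VelTest)$ and $\bilinearform{B}(\VelTest, \PressureVarEx)$ on each $T \in \mesh$. The volume contributions collapse to $(-\nu \Delta \VelVarEx + \nabla \PressureVarEx, \HdivTestT)_T = (\ForceVar, \HdivTestT)_T$ by \eqref{eq:navierstokes}, and $\bilinearform{B}(\VelVarEx, \PressureTest) = 0$ since $\divergence(\VelVarEx) = 0$. Because $\VelVarEx \in \Ureg$ is $H^1$-continuous and identified with $(\VelVarEx, \VelVarEx|_{\mathcal{F}})$, the tangential jump $\facetjump{\VelVarEx^t}$ vanishes, so the two contributions in $\bilinearform{A}$ carrying $\facetproj^{k-1}\facetjump{\VelVarEx^t}$ drop out. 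What is left besides the $f$-term are the boundary pieces $\sumoverallelements \bigl[(\nu\,\partial_n\VelVarEx,\HdivTestT)_{\partial T} - (\PressureVarEx,\HdivTestT\cdot n)_{\partial T} - (\nu\,\partial_n\VelVarEx,\facetproj^{k-1}\facetjump{\VelTest^t})_{\partial T}\bigr]$.

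To collect these into the form of $\ConDiff$, split $\HdivTestT = \HdivTestT^t + (\HdivTestT\cdot n)n$ and $\facetjump{\VelTest^t} = \HdivTestT^t - \FacetTestF$. In the tangential contribution, adding and subtracting $\facetproj^{k-1}$ yields $\sumoverallelements (\nu\,\partial_n\VelVarEx, (\id - \facetproj^{k-1})\HdivTestT^t)_{\partial T}$ plus a stray term $\sumoverallelements(\nu\,\partial_n\VelVarEx, \facetproj^{k-1}\FacetTestF)_{\partial T}$; the latter vanishes because $\nu\,\partial_n\VelVarEx$ flips sign with $n$ across interior facets (so opposite-element contributions cancel) and $\FacetTestF = 0$ on the Dirichlet boundary. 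In the normal direction, the scalar $\sigma_{nn}:=\nu(\partial_n\VelVarEx)\cdot n - \PressureVarEx$ is single-valued across interior facets, hence $\sumoverallelements \int_{\partial T} \sigma_{nn}(\HdivTestT\cdot n)\,ds = \sumoverallfacets \int_F \sigma_{nn}\,\jumpDG{\VelTest\cdot n}\,ds$, and the relaxed conformity $\facetproj^{k-1}\jumpDG{\VelTest\cdot n} = 0$ from \eqref{eq:weaklyhdivconf} permits inserting $(\id - \facetproj^{k-1})$ on both factors. Since $n$ is constant on each straight facet, the tangential and normal fragments recombine into $\sumoverallelements \int_{\partial T}(\id - \facetproj^{k-1})(-\nu\,\partial_n\VelVarEx + \PressureVarEx n)\cdot(\id - \facetproj^{k-1})\HdivTestT\,ds$, which is exactly $-\ConDiff$, delivering the stated identity.

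For \eqref{eq:consistencyerror}, I apply Cauchy-Schwarz facet-wise. On the data factor, standard $L^2(F)$-projection approximation together with a multiplicative trace inequality yields
$\|(\id - \facetproj^{k-1})(-\nu\,\partial_n\VelVarEx + \PressureVarEx n)\|_{L^2(\partial T)} \lesssim h^{m-1/2}\bigl(\nu|\VelVarEx|_{H^{m+1}(T)} + |\PressureVarEx|_{H^m(T)}\bigr)$ for $m = \min(k,l-1)$. On the test-function factor, a Bramble-Hilbert argument with $s=1$ combined with the discrete trace inequality for polynomials gives $\|(\id - \facetproj^{k-1})\HdivTestT\|_{L^2(\partial T)} \lesssim h^{1/2}\|\nabla\HdivTestT\|_{L^2(T)}$. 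A global Cauchy-Schwarz over the element sums, together with $\|\nabla\HdivTest\|_{L^2(\mesh)} \le \brokenHnorm{\VelTest}$, produces \eqref{eq:consistencyerror}. The main subtlety is the bookkeeping in the second step: tracking the tangential/normal decomposition of $\HdivTestT$, exploiting the single-valuedness of $-\nu\,\partial_n\VelVarEx + \PressureVarEx n$ on interior facets, and using $\facetproj^{k-1}\jumpDG{\VelTest\cdot n}=0$ to absorb the higher-mode mismatches so that the final expression can be re-bundled into a single vector-valued $(\id - \facetproj^{k-1})$-pairing.
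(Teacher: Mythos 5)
Your proof is correct and follows essentially the same route as the paper's: element-wise integration by parts combined with the strong Stokes equations, vanishing of the terms carrying $\facetproj^{k-1}\facetjump{\VelVarEx^t}$, cancellation of the facet-unknown term via single-valuedness of $\nu\frac{\partial\VelVarEx}{\partial n}$, insertion of $(\id-\facetproj^{k-1})$ on the normal component using $\facetproj^{k-1}\jumpDG{\HdivTest\cdot n}=0$, and finally Cauchy--Schwarz with $L^2(F)$-projection approximation and trace/Bramble--Hilbert estimates (your $\nu$-bookkeeping is just a different but equivalent splitting of $\sqrt{\nu}\cdot\sqrt{\nu}$). The only blemish is a harmless sign slip at the end of the identity step: the recombined boundary remainder equals $\sum_{T\in\mesh}\int_{\partial T}(\id-\facetproj^{k-1})\bigl(\nu\tfrac{\partial\VelVarEx}{\partial n}-\PressureVarEx n\bigr)\cdot(\id-\facetproj^{k-1})\HdivTestT\,ds=-\ConDiff(\VelVarEx,\PressureVarEx,\VelTest)$, so the expression you wrote with $\bigl(-\nu\tfrac{\partial\VelVarEx}{\partial n}+\PressureVarEx n\bigr)$ is $+\ConDiff$ rather than $-\ConDiff$; the final identity you claim is nevertheless the correct one.
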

\vspace*{-0.25cm}
\begin{proof}
  As $\divergence(\VelVarEx)=0$ and $\facetjump{ \VelVarEx^t }=0$ we get by an element wise partial integration
  \begin{align*}
    \bilinearform{K}& ((\VelVarEx,\PressureVarEx),(\HdivTest,\PressureTest))
    \\
&  =   \sumoverallelements
   \int_{T} ( \divergence(- \nu \nabla \VelVarEx) + \nabla \PressureVarEx) \HdivTestT \ d {x}
 + \int_{\partial T} \nu \frac{\partial \VelVarEx}{\partial {\normal} }  (\HdivTestT - \facetproj^{k-1} \facetjump{\VelTest^t}) 
         -\PressureVarEx \HdivTestT \cdot n \ d {s}
    \\
&  =
    \sumoverallelements 
   \int_{T} f \HdivTestT \ d {x}
   + \int_{\partial T} \nu \frac{\partial \VelVarEx}{\partial {\normal} }  (\HdivTestT^t - \facetproj^{k-1} \HdivTestT^t + \FacetTest^t) 
-(-\nu \frac{\partial \VelVarEx}{\partial {\normal} } \cdot n + \PressureVarEx ) (\HdivTestT^n \cdot n) \ d {s}.
  \end{align*}
  On interior facets we have $\jumpDG{\nu \frac{\partial \VelVarEx}{\partial {\normal} }} = 0$ and on boundary facets we have $\FacetTest^t = 0$ so that
  \begin{equation}
    \sumoverallelements \int_{\partial T} \nu \frac{\partial \VelVarEx}{\partial {\normal} } \FacetTest^t \ d {s}  = \sumoverallinnerfacets \int_F \jumpDG{\nu \frac{\partial \VelVarEx}{\partial {\normal} }} \FacetTestF^t \, ds +
    \sumoverallouterfacets \int_F \nu \frac{\partial \VelVarEx}{\partial {\normal} } \FacetTestF^t \, ds = 0.
  \end{equation}
  Further we have, with $\sigma_n = -\nu \frac{\partial \VelVarEx}{\partial {\normal} } \cdot n  + \PressureVarEx$
  \begin{align}
    \sumoverallelements \int_{\partial T} \sigma_n \HdivTestT^n \cdot n \ d {s}
    & =       \sumoverallelements \int_{\partial T} \sigma_n (\id - \facetproj^{k-1}) (\HdivTestT^n\cdot n) \ d {s}.
  \end{align}
  In the last step we used that on the interior facets we have $\facetproj^{k-1} \jumpDG{\HdivTest \cdot n} = 0$ due to $\HdivTest \in \HdivSpaceHODC$ and on the boundary facets we have $\facetproj^{k-1} \HdivTest \cdot n = 0$. For the proof of \eqref{eq:consistencyerror} we start with the estimate of the velocity part. Using the Cauchy Schwarz inequality and properties of the $L^2$ projection on one element $T \in \mesh$ we get for $m = \min(k,l-1)$
  \begin{align*}
    \int_{\partial T}
    & (\id - \facetproj^{k-1}) (- \nu \frac{\partial \VelVarEx}{\partial n} + \PressureVarEx n) \cdot (\id - \facetproj^{k-1}) \HdivTestT \, ds \\
    &\le \left( \nu \Vert (\id - \facetproj^{k-1}) \nabla \VelVarEx \Vert_{\partial T} + \Vert (\id - \facetproj^{k-1}) \PressureVarEx \Vert_{\partial T} \right) \ \Vert (\id - \facetproj^{k-1}) \HdivTestT \Vert_{\partial T} \\
    &\lesssim h^{m-1/2} \left( \sqrt{\nu} \Vert \nabla \VelVarEx \Vert_{H^m(T)} + \frac{1}{\sqrt{\nu}} \Vert \PressureVarEx \Vert_{H^m(T)} \right) \ h^{\frac12} \sqrt{\nu} \Vert \nabla \HdivTestT \Vert_T.
  \end{align*}
  Summing over all elements concludes the proof. The pressure estimate follows with similar techniques.
\end{proof}

\begin{lemma}[Continuity] \label{lem:continuity}
  There holds
  \begin{subequations}
    \begin{align}
      \bilinearform{A}(\VelVarEx,\VelTest)
      & \lesssim  \sqrt{\nu} \brokenHnormII{\VelVarEx} \ \sqrt{\nu} \brokenHnorm{\VelTest}
      && \forall \VelVarEx \in \VelSpace + \Ureg, \VelTest \in  \VelSpace, \label{eq:contA} \\
      \quad \text{and} \quad \bilinearform{B}(\VelVarEx,\PressureVarEx)
      & \lesssim  \sqrt{\nu} \brokenHnorm{u} \ \frac{1}{\sqrt{\nu}} \| q \|_0
      && \forall \VelVarEx \in \VelSpace + \Ureg, \PressureVarEx \in L^2(\Omega), \label{eq:contB}
    \end{align}
  \end{subequations}
  which implies for all 
  $
  (\VelVarEx,\PressureVarEx) \in \VelSpace + \Ureg \times L^2(\Omega),~
  (\VelTest,\PressureTest) \in \VelSpace \times \PressureSpace:
  $
  \begin{equation}
    \bilinearform{K}((\VelVarEx,\PressureVarEx),(\VelTest,\PressureTest))
    \lesssim  \brokenHnormII{(\VelVarEx,\PressureVarEx)} \brokenHnorm{(\VelTest,\PressureTest)}.
  \end{equation}
  \vspace*{-0.3cm}
\end{lemma}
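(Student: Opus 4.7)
The plan is to bound each of the four contributions to $\bilinearform{A}(\VelVarEx,\VelTest)$ separately by a product involving (at most) $\sqrt{\nu}\brokenHnormII{\VelVarEx}$ and $\sqrt{\nu}\brokenHnorm{\VelTest}$, then treat $\bilinearform{B}$ by an elementary Cauchy--Schwarz argument, and finally combine both to obtain the bound on $\bilinearform{K}$.

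For the viscosity form $\bilinearform{A}$, the volume term $\sum_T \nu(\nabla u_T,\nabla v_T)_T$ is handled directly by element-wise Cauchy--Schwarz, giving $\nu \brokenHnorm{\VelVarEx}\brokenHnorm{\VelTest}$. The \emph{consistency} term $\sum_T \int_{\partial T} \nu \partial_n u_T \cdot \facetproj^{k-1}\facetjump{v^t}\,ds$ is split by inserting the weights $h^{1/2}$ and $h^{-1/2}$ on the two factors: the first factor yields $\nu\,h\|\nabla u_T\|_{\partial T}^2$, which is exactly what the extra term in $\brokenHnormII{\cdot}$ is designed to control, while the second yields the facet contribution to $\brokenHnorm{\VelTest}$. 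Note $\|\facetproj^{k-1} \facetjump{v^t}\|_{\partial T}\le \|\facetjump{v^t}\|_{\partial T}$ so the $L^2$-projection is harmless here. For the symmetric \emph{adjoint consistency} term $\sum_T \int_{\partial T} \nu \partial_n v_T \cdot \facetproj^{k-1}\facetjump{u^t}\,ds$, I use that $v_T$ lives in a polynomial space so the discrete trace inequality $h^{1/2}\|\partial_n v_T\|_{\partial T}\lesssim \|\nabla v_T\|_T$ applies; the remaining factor $h^{-1/2}\|\facetproj^{k-1}\facetjump{u^t}\|_{\partial T}$ is already contained in $\brokenHnorm{\VelVarEx} \le \brokenHnormII{\VelVarEx}$. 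The stabilization term is bounded by pairing the two $h^{-1/2}$ weights in the obvious way (the factor $\lambda k^2$ is an $h$-independent constant for fixed $k$). Adding all four contributions gives \eqref{eq:contA}.

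The continuity of $\bilinearform{B}$ is the easier half: by Cauchy--Schwarz and the elementary bound $\|\divergence u_T\|_T\le \sqrt d\,\|\nabla u_T\|_T$ we obtain
\[
  |\bilinearform{B}(\VelVarEx,\PressureVarEx)|
  \le \sum_T \|\PressureVarEx\|_T\,\|\divergence u_T\|_T
  \lesssim \|\PressureVarEx\|_{L^2(\Omega)}\,\brokenHnorm{\VelVarEx},
\]
which after redistributing the $\sqrt{\nu}$ weights produces \eqref{eq:contB}. The estimate on $\bilinearform{K}$ then follows by expanding $\bilinearform{K}((\VelVarEx,\PressureVarEx),(\VelTest,\PressureTest))=\bilinearform{A}(\VelVarEx,\VelTest)+\bilinearform{B}(\VelTest,\PressureVarEx)+\bilinearform{B}(\VelVarEx,\PressureTest)$, applying \eqref{eq:contA}--\eqref{eq:contB} to each summand, and using $\sqrt{\nu}\brokenHnorm{\VelTest}\le \brokenHnorm{(\VelTest,\PressureTest)}$ and $\nu^{-1/2}\|\PressureTest\|_{L^2}\le \brokenHnorm{(\VelTest,\PressureTest)}$.

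The only non-routine step, and hence the main obstacle, is the adjoint consistency term, where asymmetry forces us to treat $\VelVarEx$ (possibly exact, $H^2$-regular) and $\VelTest$ (discrete) differently: $\VelTest$ uses a discrete trace/inverse estimate whereas $\VelVarEx$ is controlled through the extra facet norm in $\brokenHnormII{\cdot}$. Once the roles of $h^{\pm 1/2}$ weights are correctly assigned, the rest is bookkeeping.
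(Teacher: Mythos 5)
Your proposal is correct and takes essentially the same route as the paper: element-wise Cauchy--Schwarz splitting $\bilinearform{A}$ into its four contributions, controlling the consistency term through the extra facet term $h\Vert\nabla u\Vert_{\partial T}^2$ in $\brokenHnormII{\cdot}$, handling the adjoint consistency term with the discrete trace (inverse) inequality on the polynomial $v_T$, and then obtaining \eqref{eq:contB} and the bound on $\bilinearform{K}$ by plain Cauchy--Schwarz and combination. The only cosmetic difference is your remark that $\Vert\facetproj^{k-1}\facetjump{v^t}\Vert_{\partial T}\le\Vert\facetjump{v^t}\Vert_{\partial T}$, which is not needed since the projected jump itself appears in the definition of $\brokenHnorm{\cdot}$.
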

\vspace*{-0.25cm}
\begin{proof}
  Using the Cauchy Schwarz inequality on each triangle we get
  \begin{align*}
    \bilinearform{A}(\VelVarEx,\VelTest) \leq
    & \sumoverallelements \Big\{ \nu \Vert {\nabla} {\HdivVarEx} \Vert_T  \Vert {\nabla} {\HdivTestT} \Vert_T + \nu  \Vert {\nabla} {\HdivVarEx} \Vert_{\partial T} \Vert \facetproj^{k-1} \jumpleft \VelTest^t  \jumpright \Vert_{\partial T}  \\
    &+ \nu \Vert {\nabla} {\HdivTestT} \Vert_{\partial T} \Vert \facetproj^{k-1} \jumpleft \VelVarEx^t  \jumpright \Vert_{\partial T}
      +  \nu \frac{\lambda}{h} \Vert \facetproj^{k-1} \facetjump{ \VelVarEx^t } \Vert_{\partial T}   \Vert \facetproj^{k-1} \jumpleft \VelTest^t  \jumpright \Vert_{\partial T} \Big\} .
  \end{align*}
  All terms except the third term on the right hand side can naturally be bounded by the element contributions of the norms  $\brokenHnormII{\VelVarEx}$ and $\brokenHnorm{\VelTest}$. 
  With an inverse inequality for polynomials and Young's inequality we also get a suitable bound for the third term:
  \begin{align*}
    \Vert {\nabla} {\HdivTestT} \Vert_{\partial T} \Vert \facetproj^{k-1} \facetjump{ \VelVarEx^t} \Vert_{\partial T}
    &\le \Vert {\nabla} {\HdivTestT} \Vert_T h^{-\frac12} \Vert \facetproj^{k-1} \jumpleft \VelVarEx^t \jumpright \Vert_{\partial T}
  \end{align*}
  Finally, with the Cauchy Schwarz inequality in $\mathbb{R}^{|\mesh|}$ \eqref{eq:contA} is proven. Property \eqref{eq:contB} also follows by simply using the Cauchy Schwarz inequality.
\end{proof}

\begin{lemma}[Coercivity] \label{lem:coercivity}  
  There exists a positive number $c_\stab \in \rr$ such that for the stabilization parameter $\stab > c_\stab$ there holds
  \begin{equation*}
    \bilinearform{A}(\VelVar,\VelVar) \gtrsim  \nu \brokenHnormleft \VelVar \brokenHnormright^2 \quad \forall \VelVar \in \VelSpace.
  \end{equation*}
\end{lemma}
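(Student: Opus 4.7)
The plan is to follow the standard symmetric interior penalty coercivity argument, adapted to the projected-jump formulation. I would start by plugging $\VelTest=\VelVar$ into the definition \eqref{eq:blfA}, which yields
\[
\bilinearform{A}(\VelVar,\VelVar) = \sumoverallelements \nu \|\nabla \HdivVarT\|_T^2 - 2\sumoverallelements \int_{\partial T} \nu \frac{\partial \HdivVarT}{\partial n} \facetproj^{k-1}\facetjump{\VelVar^t}\,ds + \sumoverallelements \int_{\partial T} \nu \frac{\stab k^2}{h} \bigl|\facetproj^{k-1}\facetjump{\VelVar^t}\bigr|^2\,ds.
\]
The only indefinite contribution is the middle (consistency) term; the strategy is to absorb it partly into the $\|\nabla \HdivVarT\|_T^2$ contribution and partly into the stabilization, provided $\stab$ is chosen sufficiently large.

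Next I would bound the cross term element-by-element. By Cauchy--Schwarz on $\partial T$ together with the $L^2(F)$-stability $\|\facetproj^{k-1} w\|_F \le \|w\|_F$ of the facet projection (applied in reverse to keep the projected jump in the estimate), I would get
\[
\left| \int_{\partial T} \nu \frac{\partial \HdivVarT}{\partial n} \facetproj^{k-1}\facetjump{\VelVar^t}\,ds \right| \le \nu \|\nabla \HdivVarT\|_{\partial T} \, \|\facetproj^{k-1}\facetjump{\VelVar^t}\|_{\partial T}.
\]
The polynomial trace-inverse estimate $\|\nabla \HdivVarT\|_{\partial T}^2 \le C_{\text{inv}}\, k^2 h^{-1} \|\nabla \HdivVarT\|_T^2$ (standard on shape-regular simplices, with a constant $C_{\text{inv}}$ independent of $h$ and $k$) then converts the boundary norm of $\nabla \HdivVarT$ into a volume norm at the price of a factor $k^2/h$. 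Combining with Young's inequality $ab \le \varepsilon a^2 + \frac{1}{4\varepsilon} b^2$ yields
\[
2 \left| \int_{\partial T} \nu \frac{\partial \HdivVarT}{\partial n} \facetproj^{k-1}\facetjump{\VelVar^t}\,ds \right| \le 2\varepsilon \nu \|\nabla \HdivVarT\|_T^2 + \frac{C_{\text{inv}} \nu k^2}{2\varepsilon h} \|\facetproj^{k-1}\facetjump{\VelVar^t}\|_{\partial T}^2.
\]

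Finally I would assemble these estimates: choosing $\varepsilon = 1/4$ leaves $(1/2)\nu\|\nabla \HdivVarT\|_T^2$ on the gradient side, while the jump contribution is bounded below by $\nu(\stab - 2 C_{\text{inv}}) k^2 h^{-1} \|\facetproj^{k-1}\facetjump{\VelVar^t}\|_{\partial T}^2$. Setting $c_\stab := 4\, C_{\text{inv}}$ and requiring $\stab > c_\stab$ gives $\stab - 2 C_{\text{inv}} \ge \stab/2$, so that after summing over elements
\[
\bilinearform{A}(\VelVar,\VelVar) \ge \tfrac{1}{2} \sumoverallelements \nu \|\nabla \HdivVarT\|_T^2 + \tfrac{\stab}{2} \sumoverallelements \frac{\nu k^2}{h} \|\facetproj^{k-1}\facetjump{\VelVar^t}\|_{\partial T}^2 \gtrsim \nu \brokenHnorm{\VelVar}^2,
\]
which is the claim. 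I do not anticipate a real obstacle here; the only point that needs care is to keep the projection $\facetproj^{k-1}$ on both factors of the jump term throughout, so that the coercivity is obtained precisely in the discrete norm \eqref{eq:discretenorms} (which only controls the projected tangential jump and not the full one), and to use the correct trace-inverse inequality with the $k^2/h$ scaling that matches the $\stab k^2/h$ weighting of the stabilization.
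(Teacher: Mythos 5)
Your argument is correct and is essentially the paper's own proof: testing with $\VelVar$, bounding the mixed term by Cauchy--Schwarz, converting $\Vert \nabla \HdivVarT \Vert_{\partial T}$ into $\Vert \nabla \HdivVarT \Vert_T$ via the discrete trace-inverse inequality (the paper cites the same Warburton--Hesthaven-type bound), and absorbing via Young's inequality for $\stab$ beyond an explicit threshold. The only difference is bookkeeping of constants (you keep the $k^2$ scaling explicitly, which matches the $\stab k^2/h$ penalty and is harmless for this $h$-version result), so no further changes are needed.
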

\vspace*{-0.25cm}
\begin{proof} Using the Cauchy Schwarz inequality we first get 
  \begin{align*}
    \bilinearform{A}(\VelVar,\VelVar)
    & \ge \sumoverallelements \nu \Vert {\nabla} {\HdivVarT} \Vert_T^2 - \nu 2 \Vert   \frac{\partial {\HdivVarT}}{\partial {\normal} } \Vert_{\partial T} \Vert \facetproj^{k-1} \facetjump{ \VelVar^t } \Vert_{\partial T}    + \nu \frac{\stab}{h} \Vert \facetproj^{k-1} \facetjump{ \VelVar^t } \Vert_{\partial T}^2.
  \end{align*}
  For the second term we first use an inverse trace inequality (with constant $c_{{}_{T}}$, \cite{WARBURTON20032765}) for polynomials and Young's inequality, thus 
  \begin{align*}
    2 \Vert   \frac{\partial {\HdivVarT}}{\partial {\normal} } \Vert_{\partial T} \Vert \facetproj^{k-1} \facetjump{ \VelVar^t } \Vert_{\partial T} &
   \leq  \frac{1}{2} \Vert  \nabla \HdivVarT  \Vert_{T}^2 +  \frac{2 c_{{}_{T}}^2 }{h} \Vert \facetproj^{k-1} \facetjump{ \VelVar^t } \Vert_{\partial T}^2.
  \end{align*}
This leads to $\bilinearform{A}(\VelVar,\VelVar) \ge \nu c_\stab \brokenHnormleft \VelVar \brokenHnormright^2$
  with $c_\stab = 2 c_{{}_{T}}^2$.
\end{proof}

\begin{lemma}[LBB] \label{lem:lbb}
  There holds
  \begin{equation}\label{eq:lbb}
    \sup_{\VelVar \in \VelSpace} \frac{\bilinearform{B}(\VelVar,\PressureVar)}{\sqrt{\nu}\brokenHnormleft \VelVar \brokenHnormright} \gtrsim \frac{1}{\sqrt{\nu}} \Vert \PressureVar \Vert_{L^2}, \quad \forall \ \PressureVar \in \PressureSpace.
  \end{equation}
\end{lemma}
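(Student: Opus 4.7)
The plan is to reduce the LBB on the relaxed space to the LBB on the standard $H(\divergence)$-conforming pair $(\HdivSpace,\PressureSpace)$ via the trivial inclusion $\HdivSpace \subset \HdivSpaceHODC$, which only increases the supremum. Hence it suffices to exhibit, for every $\PressureVar \in \PressureSpace$ (with mean zero so that $\PressureVar \in L^2_0(\Omega)$), a test function $\VelVar = (\HdivVar,\FacetVar) \in \HdivSpace \times \FacetSpace \subset \VelSpace$ with $\bilinearform{B}(\VelVar,\PressureVar) \gtrsim \|\PressureVar\|_{L^2}^2$ and $\brokenHnorm{\VelVar} \lesssim \|\PressureVar\|_{L^2}$; the $\sqrt{\nu}$ factors then cancel.

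First, I would invoke the continuous LBB on $\Omega$: there exists $w \in [H^1_0(\Omega)]^d$ with $\divergence(w) = -\PressureVar$ and $\|w\|_{H^1(\Omega)} \lesssim \|\PressureVar\|_{L^2(\Omega)}$. I would then set $\HdivVar := \Pi^{BDM} w$, the classical BDM interpolant (applied elementwise after, if necessary, a Scott--Zhang-type smoothing to accommodate only $H^1$-regularity). The two key properties I need are: (i) the commuting diagram property $\int_T \divergence(\Pi^{BDM} w)\, \PressureVar\, dx = \int_T \divergence(w)\, \PressureVar\, dx$ for every $\PressureVar \in \mathcal{P}^{k-1}(T)$, which holds because $\divergence(\HdivSpace|_T) = \PressureSpace|_T$; and (ii) the $H^1$-stability $\|\nabla \Pi^{BDM} w\|_T \lesssim \|w\|_{H^1(\omega_T)}$ on a local patch $\omega_T$. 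I would fix the facet part by the $L^2(F)$-projection of the tangential trace, $\FacetVarF := \facetproj^{k-1} w^t|_F$ (which lies in $\FacetSpace$ since $w$ is continuous, and vanishes on $\facetsext$ by the Dirichlet condition on $w$).

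Then $\bilinearform{B}(\VelVar,\PressureVar) = -\sum_T \int_T \PressureVar\, \divergence(\Pi^{BDM} w)\, dx = -\int_\Omega \PressureVar\, \divergence(w)\, dx = \|\PressureVar\|_{L^2(\Omega)}^2$ by the commuting property. For the norm bound, the volume contribution $\sum_T \|\nabla \Pi^{BDM} w\|_T^2 \lesssim \|w\|_{H^1(\Omega)}^2$ is standard. For the facet contribution, using continuity of $w^t$ across facets we have on each element boundary
\begin{equation*}
\facetproj^{k-1} \facetjump{\VelVar^t} = \facetproj^{k-1}\big( (\Pi^{BDM} w)^t - w^t \big),
\end{equation*}
so that a trace-plus-interpolation estimate yields $\tfrac{1}{h}\|\facetproj^{k-1}\facetjump{\VelVar^t}\|_{\partial T}^2 \lesssim \tfrac{1}{h}\|\Pi^{BDM} w - w\|_{\partial T}^2 \lesssim \|w\|_{H^1(\omega_T)}^2$. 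Summing and combining with the continuous LBB gives $\brokenHnorm{\VelVar} \lesssim \|w\|_{H^1(\Omega)} \lesssim \|\PressureVar\|_{L^2(\Omega)}$, and the Fortin argument concludes.

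The main obstacle is not conceptual but technical: the classical BDM interpolant demands slightly more than $H^1$-regularity, so one must either invoke a DG-compatible variant of $\Pi^{BDM}$ (as used in \cite{cockburn2007note, guzman2016h}) or precompose with a quasi-interpolant. Everything else is bookkeeping, and in particular the relaxation from $\HdivSpace$ to $\HdivSpaceHODC$ plays no role in this proof other than through the obvious inclusion, so the resulting LBB constant inherits the $h$- and $k$-robustness of the standard $BDM_k/\mathcal{P}^{k-1}$ pair.
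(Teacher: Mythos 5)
Your proof is correct and follows essentially the same route as the paper, which disposes of this lemma by citing \cite[Proposition 2.3.5]{lehrenfeld2010hybrid}: the standard Fortin argument for the $H(\divergence)$-conforming HDG pair (continuous inf-sup supplying $w\in[H^1_0(\Omega)]^d$ with $\divergence(w)=-\PressureVar$, elementwise BDM interpolation with its commuting moment conditions \eqref{eq:bdmprops}, tangential facet unknown by $L^2(F)$-projection), combined with the observation -- implicit in the paper, explicit in your write-up -- that $\HdivSpace\times\FacetSpace\subset\VelSpace$ carries the same norm, so the supremum over the relaxed space can only be larger. Two minor points: the commuting identity $\int_T\divergence(\Pi^{BDM}w)\,q\,dx=\int_T\divergence(w)\,q\,dx$ should be justified from the facet and interior moments in \eqref{eq:bdmprops} via integration by parts (the surjectivity $\divergence(\HdivSpace|_T)=\PressureSpace|_T$ by itself does not give it, and note the elementwise BDM interpolant is in fact well defined and $H^1$-stable on $[H^1(T)]^d$, so no smoothing is needed), and your closing claim that the constant inherits $k$-robustness is not delivered by the scaling and trace estimates you invoke -- but the lemma, with $\lesssim$ meaning independent of $h$ only, does not require it, and the paper cites \cite{LedererSchoeberl2016} separately for polynomial robustness.
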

\vspace*{-0.25cm}
\begin{proof}
  We refer to \cite[Proposition 2.3.5]{lehrenfeld2010hybrid} where the DG analysis of \cite{hansbo2002discontinuous} is applied to the $H(\divergence)$-conforming Hybrid DG method. A polynomial robust LBB estimate for the two dimensional DG case can be found in \cite{LedererSchoeberl2016}.
\end{proof}

\begin{lemma}[Strang] \label{lem:strang}
  Let $(\VelVarEx,\PressureVarEx) \in \Ureg \times L_0^2(\Omega)$ be the exact solution of~\eqref{eq:stokes} and $(\VelVar,\PressureVar) \in \VelSpace \times \PressureSpace$ be the solution of \eqref{eq:discstokes}. Then there holds
  \begin{equation}
    \brokenHnormII{(\VelVarEx-\VelVar,\PressureVarEx-\PressureVar)} \lesssim \inf_{(\VelTest,\PressureTest) \in \VelSpace \times \PressureSpace} \brokenHnormII{(\VelVarEx-\VelTest,\PressureVarEx-\PressureTest)} +
\sup_{\VelTest \in \VelSpace} \frac{\ConDiff(\VelVarEx, \PressureVarEx, \VelTest)}{\brokenHnormII{\VelTest}}
  \end{equation}
\end{lemma}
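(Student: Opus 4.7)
The plan is to follow a standard Strang-type argument based on the four ingredients already established: consistency (Lemma \ref{lem:consistency}), continuity (Lemma \ref{lem:continuity}), coercivity of $\bilinearform{A}$ (Lemma \ref{lem:coercivity}), and the discrete LBB-condition (Lemma \ref{lem:lbb}). The first step is a triangle inequality: for any $(\VelTest,\PressureTest) \in \VelSpace \times \PressureSpace$ we write $(\VelVarEx - \VelVar, \PressureVarEx - \PressureVar) = (\VelVarEx - \VelTest, \PressureVarEx - \PressureTest) + (\VelTest - \VelVar, \PressureTest - \PressureVar)$ and bound the first summand directly by $\brokenHnormII{(\VelVarEx - \VelTest, \PressureVarEx - \PressureTest)}$. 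Taking the infimum later yields the approximation term on the right hand side.

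To control the discrete error $(e_h, \epsilon_h) := (\VelTest - \VelVar, \PressureTest - \PressureVar) \in \VelSpace \times \PressureSpace$, I would combine coercivity of $\bilinearform{A}$ on $\VelSpace$ with the LBB-estimate for $\bilinearform{B}$ to deduce a standard mixed inf-sup condition for the saddle point form $\bilinearform{K}$ on the discrete space: there exists some $(\DiscTest,\PressureTestEx) \in \VelSpace \times \PressureSpace$ with $\sqrt{\nu}\brokenHnorm{\DiscTest} + \tfrac{1}{\sqrt{\nu}}\Vert\PressureTestEx\Vert_{L^2} \lesssim 1$ such that
\begin{equation*}
  \brokenHnorm{(e_h,\epsilon_h)} \lesssim \bilinearform{K}((e_h,\epsilon_h),(\DiscTest,\PressureTestEx)).
\end{equation*}
This is a textbook Brezzi-type argument (test with $(e_h,-\epsilon_h)$ plus an LBB-corrector for the pressure). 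Note that on discrete functions $\brokenHnorm{\cdot}$ and $\brokenHnormII{\cdot}$ are equivalent via a discrete inverse trace inequality, so we may freely pass between them.

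Next I would exploit Galerkin near-orthogonality. Consistency (Lemma \ref{lem:consistency}) gives $\bilinearform{K}((\VelVarEx,\PressureVarEx),(\DiscTest,\PressureTestEx)) = f(\DiscTest) - \ConDiff(\VelVarEx,\PressureVarEx,\DiscTest)$, while the discrete equations \eqref{eq:discstokes} give $\bilinearform{K}((\VelVar,\PressureVar),(\DiscTest,\PressureTestEx)) = f(\DiscTest)$. Subtracting and adding the interpolation error yields
\begin{equation*}
  \bilinearform{K}((e_h,\epsilon_h),(\DiscTest,\PressureTestEx)) = \bilinearform{K}((\VelTest - \VelVarEx, \PressureTest - \PressureVarEx),(\DiscTest,\PressureTestEx)) - \ConDiff(\VelVarEx,\PressureVarEx,\DiscTest).
\end{equation*}
The first term on the right is bounded by continuity (Lemma \ref{lem:continuity}) by $\brokenHnormII{(\VelTest - \VelVarEx,\PressureTest - \PressureVarEx)} \brokenHnorm{(\DiscTest,\PressureTestEx)}$, and the consistency residual gives the second term on the right hand side of the claim. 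A final triangle inequality combines the bound on $(e_h,\epsilon_h)$ with the initial splitting and an infimum over $(\VelTest,\PressureTest)$.

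The only mildly subtle point is the norm discrepancy in the consistency supremum: the statement uses $\brokenHnormII{\DiscTest}$ whereas the mixed inf-sup produces a test function measured in $\brokenHnorm{\cdot}$. This is harmless because $\DiscTest \in \VelSpace$ is a discrete function, on which the two norms are equivalent by the inverse trace inequality $h\Vert \nabla \HdivTestT \Vert_{\partial T}^2 \lesssim \Vert \nabla \HdivTestT \Vert_T^2$. Beyond this bookkeeping, I do not expect any genuine obstacle; the argument is essentially the standard Babu\v{s}ka–Brezzi framework combined with Strang's first lemma, carried out in the broken, hybrid-DG norms introduced in \eqref{eq:discretenorms}.
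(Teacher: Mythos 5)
Your argument is correct and follows essentially the same route as the paper's proof: a triangle inequality, discrete inf-sup stability of $\bilinearform{K}$ via Brezzi's theorem (from Lemmas \ref{lem:continuity}, \ref{lem:coercivity}, \ref{lem:lbb}), Galerkin near-orthogonality up to the consistency residual $\ConDiff$, a continuity bound, and division by the norm of the test pair. Your explicit remark on the equivalence of $\brokenHnorm{\cdot}$ and $\brokenHnormII{\cdot}$ on discrete functions makes precise a step the paper leaves implicit, but it is the same argument.
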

\vspace*{-0.25cm}
\begin{proof}
  We use a standard Strang-type approach for the proof. We first apply a simple triangle inequality
  for $(\VelTest,\PressureTest) \in \VelSpace \times \PressureSpace$,
  $$
  \brokenHnormII{(\VelVarEx-\VelVar,\PressureVarEx-\PressureVar)}
  \leq
  \brokenHnormII{(\VelVarEx-\VelTest,\PressureVarEx-\PressureTest)}
  +
  \brokenHnormII{(\VelTest-\VelVar,\PressureTest-\PressureVar)}.
  $$
  With Lemmas \ref{lem:continuity}, \ref{lem:coercivity}, \ref{lem:lbb} we can apply Brezzi's theorem to obtain inf-sup stability (with a constant independent of $h$ and $\nu$) for the bilinearform $\bilinearform{K}$ on the discrete space $\VelSpace \times \PressureSpace$, so that
  \begin{align*}
    \brokenHnormII{ (\VelVar - \VelTest,
    & \PressureVar - \PressureTest)}
    \brokenHnormII{(\VelTestB,\PressureTestB)}
      \lesssim
      \bilinearform{K}((\VelVar-\VelTest,\PressureVar-\PressureTest),(\VelTestB,\PressureTestB)) \\
    & =
      \bilinearform{K}((\VelVarEx-\VelTest,\PressureVarEx-\PressureTest),(\VelTestB,\PressureTestB))
      +
      {\bilinearform{K}((\VelVar-\VelVarEx,\PressureVar-\PressureVarEx),(\VelTestB,\PressureTestB))}\\
    & \lesssim
    \brokenHnormII{ (\VelVarEx - \VelTest, \PressureVarEx-\PressureTest)}
    \brokenHnormII{ (\VelTestB,\PressureTestB)} + \ConDiff(\VelVarEx,\PressureVarEx,\VelTestB).
  \end{align*}
Dividing by $\brokenHnormII{(\VelTestB,\PressureTestB)}$, the claim then follows as $(\VelTest,\PressureTest)$ was arbitrary.
\end{proof}


\begin{lemma} \label{lem:interpol}
  For $\VelVarEx \in [H^1(\Omega)]^d \cap [H^l(\mesh)]^d,~l\geq 2$ there holds for $m = \min\{ l-1, k\}$
  \vspace*{-0.25cm}
  \begin{subequations}
  \begin{equation}
    \inf_{\VelTest\in\VelSpace} \brokenHnormII{\VelTest - \VelVarEx} \lesssim h^{m} \Vert \VelVarEx \Vert_{H^{m+1}(\mesh)}.
  \end{equation}
  If further $\PressureVarEx \in H^{l-1}(\mesh) \cap L^2_0(\Omega)$, then 
  \begin{equation}
    \inf_{(\VelTest,\PressureTest)\in\VelSpace\times\PressureSpace} \brokenHnormII{(\VelTest - \VelVarEx,\PressureTest - \PressureVarEx)} \lesssim h^{m} ( \sqrt{\nu} \Vert \VelVarEx \Vert_{H^{m+1}(\mesh)} + \frac{1}{\sqrt{\nu}}\Vert \PressureVarEx \Vert_{H^{m}(\mesh)})
  \end{equation}
\end{subequations}
\end{lemma}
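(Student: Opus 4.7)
The plan is to construct an explicit interpolant $v_h = (v_{\mathcal{T}},v_{\mathcal{F}})\in \VelSpace$ that decomposes naturally into a volume part and a facet part, and to bound each contribution in $\brokenHnormII{\cdot}$ separately using standard estimates. For the volume unknown, set $v_{\mathcal{T}} := \IntBDMHdiv u$, the element-wise BDM interpolant on each $T\in\mesh$. Since $\IntBDMHdiv u \in \HdivSpace \subset \HdivSpaceHODC$, this is an admissible candidate. For the facet unknown, choose $v_{\mathcal{F}}|_F := \facetproj^{k-1}(u^t|_F)$, which is well defined because $u\in [H^1_0(\Omega)]^d$ has a single-valued trace; moreover $v_{\mathcal{F}}\cdot n = 0$ on interior facets as $u$ is identified with $(u,u|_{\mathcal{F}})$ and the tangential projection is taken. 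For the pressure I take $q_h := \Pi^{k-1}_T p$ element-wise.

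Next I collect the error estimates. From the standard BDM interpolation theory (see e.g.\ Brezzi--Fortin) one has, for $m=\min\{l-1,k\}$,
\begin{equation*}
\|\nabla(u - v_{\mathcal{T}})\|_T \lesssim h^m \|u\|_{H^{m+1}(T)},
\qquad
\|u - v_{\mathcal{T}}\|_{\partial T} \lesssim h^{m+1/2}\|u\|_{H^{m+1}(T)},
\end{equation*}
\begin{equation*}
\|\nabla(u - v_{\mathcal{T}})\|_{\partial T} \lesssim h^{m-1/2}\|u\|_{H^{m+1}(T)}.
\end{equation*}
Summing over $T$ immediately controls $\sum_T \|\nabla(u - v_{\mathcal{T}})\|_T^2$ and the extra boundary contribution $\sum_T h\|\nabla(u - v_{\mathcal{T}})\|_{\partial T}^2$ by $h^{2m}\|u\|_{H^{m+1}(\mesh)}^2$.

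For the jump term in $\brokenHnorm{\cdot}$, observe that identifying $u$ with $(u,u|_{\mathcal{F}})$ and using that $u$ has a single-valued trace on every facet, the element-sided tangential jump of $v_h - u$ reduces to
\begin{equation*}
\facetjump{(v_h-u)^t} = (v_{\mathcal{T}} - u)^t\big|_F - (v_{\mathcal{F}} - u^t|_F).
\end{equation*}
By the choice $v_{\mathcal{F}} = \facetproj^{k-1} u^t|_F$, the second term lies in the kernel of $\facetproj^{k-1}$, hence
\begin{equation*}
\facetproj^{k-1}\facetjump{(v_h-u)^t} = \facetproj^{k-1}(v_{\mathcal{T}} - u)^t\big|_F,
\end{equation*}
and stability of $\facetproj^{k-1}$ in $L^2$ gives $\|\facetproj^{k-1}\facetjump{(v_h-u)^t}\|_{\partial T} \le \|v_{\mathcal{T}} - u\|_{\partial T}$. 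Plugging in the BDM boundary estimate yields $\frac{1}{h}\|\facetproj^{k-1}\facetjump{(v_h-u)^t}\|_{\partial T}^2 \lesssim h^{2m}\|u\|_{H^{m+1}(T)}^2$. Combining the three contributions proves the velocity estimate. For the combined velocity--pressure estimate, I simply add the standard $L^2$ estimate $\|p - \Pi^{k-1}_T p\|_{L^2(T)} \lesssim h^m \|p\|_{H^m(T)}$ with the factor $1/\sqrt{\nu}$.

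There is no genuine obstacle here; the only mildly delicate point is matching the facet test function to the projection appearing in the norm so that the tangential consistency error of $v_{\mathcal{F}}$ cancels exactly and the remaining jump is bounded entirely by the (already controlled) trace error of the BDM interpolant. All other estimates are classical consequences of the BDM interpolation bounds and the $L^2$-projection error for the pressure.
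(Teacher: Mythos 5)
Your construction (element-wise BDM interpolant for the volume part, $\facetproj^{k-1}$ of the tangential trace for the facet part, $L^2$-projection for the pressure) together with the scaled trace and Bramble--Hilbert estimates is correct, and it is precisely the standard argument the paper invokes here: the paper gives no details and simply refers to ``usual Bramble--Hilbert type arguments'' and Proposition 2.3.10 of the cited thesis. So your proposal matches the intended proof, merely written out explicitly, including the key observation that choosing $v_{\mathcal{F}}=\facetproj^{k-1}u^t$ makes the facet consistency error vanish under the projected jump.
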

\vspace*{-0.25cm}
\begin{proof}
  The proof is build around usual Bramble-Hilbert type arguments, cf. \cite[Proposition 2.3.10]{lehrenfeld2010hybrid}.
\end{proof}

\begin{proposition} \label{prop:errest}
  Let $(\VelVarEx,\PressureVarEx) \in  [H^1(\Omega)]^d \cap [H^l(\mesh)]^d \times H^{l-1}(\mesh)\cap L^2_0(\Omega)$ for $l\geq 2$ be the exact solution of~\eqref{eq:stokes} and $(\VelVar,\PressureVar) \in \VelSpace \times \PressureSpace$ be the solution of \eqref{eq:discstokes}. Then for $m = \min\{l-1, k\}$ there holds 
  \begin{equation}
\brokenHnormII{(\VelVarEx-\VelVar,\PressureVarEx-\PressureVar)} \lesssim
 h^{m} ( \sqrt{\nu} \Vert \VelVarEx \Vert_{H^{m+1}(\mesh)} + \frac{1}{\sqrt{\nu}} \Vert \PressureVarEx \Vert_{H^{m}(\mesh)}).
  \end{equation}
\end{proposition}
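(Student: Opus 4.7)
The plan is to combine the three workhorses developed above---Strang's lemma (Lemma \ref{lem:strang}), the consistency bound (Lemma \ref{lem:consistency}), and the interpolation estimate (Lemma \ref{lem:interpol})---in a straightforward way.

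First, I apply Lemma \ref{lem:strang}, which already gives
\[
  \brokenHnormII{(\VelVarEx-\VelVar,\PressureVarEx-\PressureVar)}
  \lesssim \inf_{(\VelTest,\PressureTest) \in \VelSpace \times \PressureSpace} \brokenHnormII{(\VelVarEx-\VelTest,\PressureVarEx-\PressureTest)}
  + \sup_{\VelTest \in \VelSpace} \frac{\ConDiff(\VelVarEx, \PressureVarEx, \VelTest)}{\brokenHnormII{\VelTest}}.
\]
So it suffices to estimate these two terms separately with the desired $h^m$ rate.

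For the infimum term, I invoke Lemma \ref{lem:interpol} (second statement) directly, which under the hypotheses $\VelVarEx \in [H^l(\mesh)]^d$, $\PressureVarEx \in H^{l-1}(\mesh) \cap L^2_0(\Omega)$ yields, with $m = \min\{l-1,k\}$,
\[
  \inf_{(\VelTest,\PressureTest)\in\VelSpace\times\PressureSpace} \brokenHnormII{(\VelTest - \VelVarEx,\PressureTest - \PressureVarEx)}
  \lesssim h^{m}\!\left( \sqrt{\nu}\, \Vert \VelVarEx \Vert_{H^{m+1}(\mesh)} + \tfrac{1}{\sqrt{\nu}}\Vert \PressureVarEx \Vert_{H^{m}(\mesh)} \right).
\]

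For the consistency/supremum term, I apply the bound \eqref{eq:consistencyerror} from Lemma \ref{lem:consistency}:
\[
  \ConDiff(\VelVarEx, \PressureVarEx, \VelTest)
  \lesssim h^m \left( \sqrt{\nu}\,\Vert \VelVarEx \Vert_{H^{m+1}(\mesh)} + \tfrac{1}{\sqrt{\nu}}\Vert \PressureVarEx \Vert_{H^{m}(\mesh)} \right)\,\sqrt{\nu}\,\brokenHnorm{\VelTest}.
\]
Since $\brokenHnorm{\VelTest} \leq \brokenHnormII{\VelTest}$ and $\sqrt{\nu}\,\brokenHnormII{\VelTest} \leq \brokenHnormII{(\VelTest,0)} \leq \brokenHnormII{\VelTest}$ up to the relation between the composite and velocity-only norm in \eqref{eq:discretenorms}, division by $\brokenHnormII{\VelTest}$ gives a supremum bounded by the same $h^m$ expression. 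Combining both contributions and adding them yields the claimed estimate. The only minor point to watch is that the $\sqrt{\nu}$-weighting in the consistency estimate matches the weighting used in the definition of $\brokenHnormII{(\cdot,\cdot)}$; this is consistent with the normalization in \eqref{eq:consistencyerror} and \eqref{eq:discretenorms}, so no extra work is needed. There is no serious obstacle---the proof is a one-line assembly of the preceding lemmas.
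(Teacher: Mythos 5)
Your proof is correct and follows exactly the paper's own argument, which consists of the one-line combination of Lemma \ref{lem:consistency}, Lemma \ref{lem:strang} and Lemma \ref{lem:interpol}. The only remark is that your $\sqrt{\nu}$-bookkeeping in the supremum term is stated a bit loosely, but this matches the level of detail the paper itself provides.
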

\vspace*{-0.25cm}
\begin{proof}
Combine Lemma \ref{lem:consistency}, Lemma \ref{lem:strang} and Lemma \ref{lem:interpol}.
\end{proof}

\subsection{Analysis of the reconstruction operator} \label{analysis:recon}
With the solution of the \emph{basic} discretization \eqref{eq:discstokes} we obtain only weakly divergence free solutions, but can apply the reconstruction operator $\Recon$ in a subsequent step.
In the following we show that we thereby obtain solenoidal solutions while preserving optimal order convergence. 
\begin{lemma}\label{lem:recon}
  Let $\HdivVar \in \HdivSpaceHODC$ with $\divergence(\HdivVarT)=0$ for all $T\in\mesh$ and Assumption \ref{ass:recon} be true for $\ReconHdiv$. Then there holds
  \begin{equation}
    \ReconHdiv \HdivVar \in \HdivSpace \quad \text{ and } \quad \divergence(\ReconHdiv \HdivVar)=0.
  \end{equation}
\end{lemma}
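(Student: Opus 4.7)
The first claim, $\ReconHdiv \HdivVar \in \HdivSpace$, is immediate from Assumption \ref{ass:recon}\eqref{eq:ass1}, so the real content is to show $\divergence(\ReconHdiv \HdivVar) = 0$. My plan is to prove this element-wise: since $\ReconHdiv \HdivVar \in \HdivSpace$ already enforces $H(\divergence)$-conformity, vanishing element-wise divergence implies vanishing global divergence.

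Fix $T \in \mesh$. Note that $\divergence(\ReconHdiv \HdivVar)|_T \in \mathcal{P}^{k-1}(T)$ because $\ReconHdiv \HdivVar|_T \in [\mathcal{P}^k(T)]^d$. Therefore it suffices to test against arbitrary $q \in \mathcal{P}^{k-1}(T)$. The plan is to integrate by parts, use the two orthogonality conditions \eqref{eq:ass2a} and \eqref{eq:ass2b} to replace $\ReconHdiv \HdivVar$ by $\HdivVarT$ in both the volume and boundary terms, and then integrate by parts back. Concretely,
\begin{align*}
\int_T \divergence(\ReconHdiv \HdivVar) \, q \, dx
&= -\int_T \ReconHdiv \HdivVar \cdot \nabla q \, dx + \int_{\partial T} (\ReconHdiv \HdivVar \cdot n) \, q \, ds.
\end{align*}
For the volume contribution, $\nabla q \in [\mathcal{P}^{k-2}(T)]^d$, hence \eqref{eq:ass2b} gives $(\ReconHdiv \HdivVar, \nabla q)_T = (\HdivVarT, \nabla q)_T$. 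For each facet $F \subset \partial T$, $q|_F \in \mathcal{P}^{k-1}(F)$, hence \eqref{eq:ass2a} gives $(\ReconHdiv \HdivVar \cdot n, q)_F = (\HdivVarT \cdot n, q)_F$. Summing up and integrating by parts back yields
\begin{align*}
\int_T \divergence(\ReconHdiv \HdivVar) \, q \, dx
= -\int_T \HdivVarT \cdot \nabla q \, dx + \int_{\partial T} (\HdivVarT \cdot n) \, q \, ds
= \int_T \divergence(\HdivVarT) \, q \, dx = 0,
\end{align*}
using the hypothesis $\divergence(\HdivVarT) = 0$.

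Since $q \in \mathcal{P}^{k-1}(T)$ was arbitrary and $\divergence(\ReconHdiv \HdivVar)|_T \in \mathcal{P}^{k-1}(T)$, we conclude $\divergence(\ReconHdiv \HdivVar) = 0$ on every $T \in \mesh$. Combined with $\ReconHdiv \HdivVar \in \HdivSpace \subset H(\divergence,\Omega)$, this implies the normal jumps across interior facets vanish in the distributional sense, hence $\divergence(\ReconHdiv \HdivVar) = 0$ in $\Omega$. I do not anticipate any real obstacle: the proof is essentially a bookkeeping argument that packages exactly the two moment conditions \eqref{eq:ass2a} and \eqref{eq:ass2b} into a discrete integration-by-parts identity. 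The only subtle point worth checking is the polynomial degree count ($\nabla q$ of degree $k-2$ and $q|_F$ of degree $k-1$), which matches precisely the ranges covered by the assumption.
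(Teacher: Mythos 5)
Your proposal is correct and follows essentially the same argument as the paper: first invoke \eqref{eq:ass1}, then test $\divergence(\ReconHdiv \HdivVar)|_T$ against $q \in \mathcal{P}^{k-1}(T)$, integrate by parts, use \eqref{eq:ass2b} for the volume term and \eqref{eq:ass2a} for the boundary term, and integrate back to reduce to $\divergence(\HdivVarT)=0$. The only cosmetic difference is how the argument is closed: the paper picks $q = \divergence(\ReconHdiv \HdivVar)$ directly, while you note that $\divergence(\ReconHdiv \HdivVar)|_T \in \mathcal{P}^{k-1}(T)$ and let $q$ range over all of $\mathcal{P}^{k-1}(T)$ -- these are equivalent.
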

\vspace*{-0.25cm}
\begin{proof}
  From \eqref{eq:ass1} we have that $\ReconHdiv \HdivVar  \in \HdivSpace$. To show that $\ReconHdiv \HdivVar$ is globally divergence free it only remains to show  $\divergence( (\ReconHdiv \HdivVar) |_T)=0$ for any element $T \in \mesh$.
  From partial integration and \eqref{eq:ass1}, \eqref{eq:ass2a} and \eqref{eq:ass2b} we have for $q \in \mathcal{P}^{k-1}(T)$ (with $\nabla q \in [\mathcal{P}^{k-2}(T)]^d$)
  \begin{subequations}
    \begin{align*}
      \int_{T} \divergence(\ReconHdiv \HdivVar ) q \, dx 
      & = - \int_{T} \ReconHdiv \HdivVar  \cdot \nabla q \, dx +  \int_{\partial T} \ReconHdiv \HdivVar \cdot n \, q \, ds \\
      & = - \int_{T} \HdivVarT \cdot \nabla q \, dx + \int_{\partial T} \HdivVarT \cdot n \, q \, ds
      = \int_{T} \divergence(\HdivVarT) q \, dx 
    \end{align*}
  \end{subequations}
From $\divergence(\HdivVarT)=0,~T\in\mesh$ we can conclude that the first term vanishes. Choosing $q =  \divergence(\ReconHdiv \HdivVar )$ concludes the proof.
\end{proof}

\begin{lemma} \label{lem:reconapprox}
  If $\Recon$ fulfills Assumption \ref{ass:recon}, there holds for $u \in \Ureg$:
  \begin{align}
    \brokenHnorm{\VelVarEx - \Recon \VelVar} & \lesssim \inf_{v_h \in \HdivSpace \times \FacetSpace} \brokenHnorm{\VelVarEx - \VelTest} + \brokenHnorm{\VelVarEx - \VelVar}.
  \end{align}
\end{lemma}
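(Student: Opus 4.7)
The plan is to insert an arbitrary $\VelTest \in \HdivSpace \times \FacetSpace$ between $\VelVarEx$ and $\Recon \VelVar$ and then exploit the linearity of $\Recon$ together with the stability estimate \eqref{eq:ass3}. The crucial side observation is that $\Recon$ acts as the identity on strongly $H(\divergence)$-conforming test functions: for any $\VelTest \in \HdivSpace \times \FacetSpace$ one has $\Recon \VelTest = \VelTest$. For the concrete averaging operator of Section~\ref{hdgnse:stokes:reconstruction:average} this is immediate, since the two sets of highest-order coefficients $\alpha_{ij}^T$ and $\alpha_{ij}^{T'}$ on a shared facet of a normal-continuous function already coincide, so the average in \eqref{eq:aver} returns the same function.

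Granting this identity, I would write, for any $\VelTest \in \HdivSpace \times \FacetSpace$,
\[ \VelVarEx - \Recon \VelVar \;=\; (\VelVarEx - \VelTest) \;+\; \Recon(\VelTest - \VelVar), \]
and apply the triangle inequality together with \eqref{eq:ass3}, so that
\[ \brokenHnorm{\VelVarEx - \Recon \VelVar} \;\leq\; \brokenHnorm{\VelVarEx - \VelTest} + \brokenHnorm{\Recon(\VelTest - \VelVar)} \;\lesssim\; \brokenHnorm{\VelVarEx - \VelTest} + \brokenHnorm{\VelTest - \VelVar}. \]
A second triangle inequality gives $\brokenHnorm{\VelTest - \VelVar} \leq \brokenHnorm{\VelVarEx - \VelTest} + \brokenHnorm{\VelVarEx - \VelVar}$, and the bound
\[ \brokenHnorm{\VelVarEx - \Recon \VelVar} \;\lesssim\; \brokenHnorm{\VelVarEx - \VelTest} + \brokenHnorm{\VelVarEx - \VelVar} \]
follows. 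The desired estimate is then obtained by taking the infimum over $\VelTest \in \HdivSpace \times \FacetSpace$.

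The argument is essentially triangle-inequality bookkeeping; the only nontrivial ingredient is the projection identity $\Recon \VelTest = \VelTest$ on $\HdivSpace \times \FacetSpace$. I expect this to be the single step worth emphasizing in the proof, since the remainder is routine once that identity is in hand.
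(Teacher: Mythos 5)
Your proof is correct and coincides with the paper's own argument: the same insertion of an arbitrary $\VelTest \in \HdivSpace \times \FacetSpace$, the same two triangle inequalities, the stability \eqref{eq:ass3}, and the concluding infimum. The only difference is that you state explicitly the identity $\Recon \VelTest = \VelTest$ on the conforming subspace, which the paper uses silently when it replaces $\brokenHnorm{\VelTest - \Recon\VelVar}$ by $\brokenHnorm{\Recon(\VelVar - \VelTest)}$ after the first triangle inequality.
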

\vspace*{-0.25cm}
\begin{proof}
  We have that $\brokenHnorm{\Recon \VelTest } \lesssim \brokenHnorm{\VelTest}$ for every $\VelTest \in \VelSpace$. Hence, there holds
  \begin{align*}
    \brokenHnorm{\VelVarEx - \Recon \VelVar}
    & \leq \brokenHnorm{\VelVarEx - \VelTest} + \brokenHnorm{\Recon (\VelVar - \VelTest)} \\
    & \lesssim \brokenHnorm{\VelVarEx - \VelTest} + \brokenHnorm{\VelTest - \VelVar} 
      \lesssim \brokenHnorm{\VelVarEx - \VelTest} + \brokenHnorm{\VelVarEx - \VelVar},
  \end{align*}
for any $\VelTest \in \HdivSpace \times \FacetSpace \subset \VelSpace$.  
\end{proof}\\
\begin{proposition} \label{prop:errest2}
  Let $(\VelVarEx,\PressureVarEx) \in  [H_0^1(\Omega)]^d \cap [H^l(\mesh)]^d \times H^{l-1}(\mesh) \cap L^2_0(\Omega)$ for $l\geq 2$ be the solution of~\eqref{eq:stokes}, $(\VelVar,\PressureVar) \in \VelSpace \times \PressureSpace$ be the solution of \eqref{eq:discstokes} and $\Recon$ fulfill Assumption \ref{ass:recon}. Then there holds $\ReconHdiv \HdivVar \in H(\divergence,\Omega)$, $\divergence(\ReconHdiv \HdivVar) = 0$ and for $m = \min\{l-1, k\}$ 
  \begin{equation}
\brokenHnormII{(\VelVarEx-\Recon\VelVar,\PressureVarEx-\PressureVar)} \lesssim
 h^{m} ( \sqrt{\nu} \Vert \VelVarEx \Vert_{H^{m+1}(\mesh)} + \frac{1}{\sqrt{\nu}} \Vert \PressureVarEx \Vert_{H^{m}(\mesh)}).
  \end{equation}
\end{proposition}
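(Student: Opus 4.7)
The plan is to combine Proposition \ref{prop:errest} (convergence of the basic method) with Lemma \ref{lem:recon} (divergence properties of the reconstruction) and Lemma \ref{lem:reconapprox} (approximation property of $\Recon$), handling the triple-bar norm $\brokenHnormII{\cdot}$ by exploiting norm equivalence on discrete spaces.

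\medskip
\noindent\textbf{Step 1 (structural properties).} First I verify the assertions $\ReconHdiv \HdivVar \in H(\divergence,\Omega)$ and $\divergence(\ReconHdiv \HdivVar) = 0$. Since $\VelVar$ solves \eqref{eq:discstokes}, we have $\bilinearform{B}(\VelVar,\PressureTest) = 0$ for all $\PressureTest \in \PressureSpace$. Because $\divergence(\HdivSpaceHODC|_T) = \PressureSpace|_T$ and $\PressureSpace$ is element-wise discontinuous, testing element by element gives $\divergence(\HdivVarT) = 0$ on each $T \in \mesh$. Lemma \ref{lem:recon} then delivers both properties.

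\medskip
\noindent\textbf{Step 2 (pressure part).} The pressure contribution $\frac{1}{\sqrt{\nu}} \Vert \PressureVarEx - \PressureVar \Vert_{L^2(\Omega)}$ is bounded directly by $\brokenHnormII{(\VelVarEx - \VelVar, \PressureVarEx - \PressureVar)}$, which is $\lesssim h^{m}(\sqrt{\nu} \Vert \VelVarEx \Vert_{H^{m+1}(\mesh)} + \frac{1}{\sqrt{\nu}} \Vert \PressureVarEx \Vert_{H^{m}(\mesh)})$ by Proposition \ref{prop:errest}.

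\medskip
\noindent\textbf{Step 3 (velocity part).} For the velocity, I use a BDM-type interpolant $\VelTest \in \HdivSpace \times \FacetSpace$ of $\VelVarEx$ and split
\begin{equation*}
\brokenHnormII{\VelVarEx - \Recon \VelVar} \leq \brokenHnormII{\VelVarEx - \VelTest} + \brokenHnormII{\VelTest - \Recon \VelVar}.
\end{equation*}
The first term is controlled by Lemma \ref{lem:interpol} at rate $h^{m}\|\VelVarEx\|_{H^{m+1}(\mesh)}$ (note that the interpolation can be chosen in the smaller space $\HdivSpace \subset \HdivSpaceHODC$). For the second term I use that $\VelTest - \Recon\VelVar$ is a discrete, piecewise polynomial function, so an inverse trace inequality yields $\brokenHnormII{\VelTest - \Recon \VelVar} \lesssim \brokenHnorm{\VelTest - \Recon \VelVar}$. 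A triangle inequality then gives
\begin{equation*}
\brokenHnorm{\VelTest - \Recon \VelVar} \leq \brokenHnorm{\VelVarEx - \VelTest} + \brokenHnorm{\VelVarEx - \Recon \VelVar}.
\end{equation*}
The first summand is absorbed into the interpolation estimate. For the second summand I apply Lemma \ref{lem:reconapprox} and bound the remaining infimum again by Lemma \ref{lem:interpol}, while $\brokenHnorm{\VelVarEx - \VelVar} \leq \brokenHnormII{\VelVarEx - \VelVar}$ is controlled via Proposition \ref{prop:errest}. Multiplying the resulting velocity estimate by $\sqrt{\nu}$ and adding the pressure estimate from Step 2 yields the claim.

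\medskip
\noindent\textbf{Main obstacle.} The key technical point is passing from the weaker norm $\brokenHnorm{\cdot}$ (in which Lemma \ref{lem:reconapprox} is stated) to the stronger norm $\brokenHnormII{\cdot}$ appearing in the statement, which contains the extra normal-derivative facet term $h\|\nabla \HdivVarEx\|_{\partial T}^2$. This is resolved exactly by the split $\VelVarEx - \Recon \VelVar = (\VelVarEx - \VelTest) + (\VelTest - \Recon \VelVar)$ above: the approximation part is controlled in the strong norm by Lemma \ref{lem:interpol}, while the discrete part is controlled in the weak norm and the inverse trace inequality on polynomials compensates the mismatch with no loss of order.
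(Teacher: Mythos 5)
Your proposal is correct and takes essentially the same route as the paper, whose one-line proof of Proposition \ref{prop:errest2} likewise combines Lemma \ref{lem:interpol}, Proposition \ref{prop:errest} and Lemma \ref{lem:reconapprox} (with Lemma \ref{lem:recon} supplying $\ReconHdiv\HdivVar\in H(\divergence,\Omega)$ and $\divergence(\ReconHdiv\HdivVar)=0$). Your Step 3, which bridges the gap between the weaker norm $\brokenHnorm{\cdot}$ of Lemma \ref{lem:reconapprox} and the stronger norm $\brokenHnormII{\cdot}$ in the statement via an interpolant plus an inverse trace inequality on the discrete remainder, simply makes explicit a detail the paper leaves implicit.
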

\vspace*{-0.25cm}
\begin{proof}
  With Lemma \ref{lem:interpol} and Proposition \ref{prop:errest} this follows from Lemma \ref{lem:reconapprox}.
\end{proof}

\subsection{Analysis of the pressure robust method \eqref{eq:probuststokes}} \label{analysis:probust}
In this section we analyze the pressure robust method given by equation \eqref{eq:probuststokes}.
The approach is based on another Strang-type argument, similar to the pressure robust error analysis in \cite{linke2014role, 2016arXiv160903701L}. Like in section \ref{analysis:recon}, we also address the analysis of the the subsequent solution of \eqref{eq:probuststokes} given by an application of the reconstruction $\Recon$.
\begin{lemma} [Velocity consistency] \label{lem:velconsistency}
Let $\VelVarEx \in \Ureg$ be the velocity solution to \eqref{eq:stokes}.
For $\VelTest \in \VelSpace$ there holds
\begin{align*}
  \bilinearform{A}(u,v_h) & = \int_\Omega -\nu \Delta u  v_ h \, dx -  \ConDiff_{\VelVarEx}(\VelVarEx, \VelTest) \\
\text{with} \quad  \ConDiff_{\VelVarEx}(\VelVarEx, \VelTest) & := \sum_{T\in\mesh} \int_{\partial T} (\id - \facetproj^{k-1}) (- \nu \frac{\partial \VelVarEx}{\partial n}) \cdot (\id - \facetproj^{k-1}) v_T \, ds.
\end{align*}
\end{lemma}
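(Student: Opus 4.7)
The plan is to mirror the proof of Lemma~\ref{lem:consistency} but to restrict attention to the viscous form $\bilinearform{A}$ and drop all pressure contributions. First I exploit the regularity $\VelVarEx \in \Ureg \subset [H_0^1(\Omega)]^d$: tangential continuity across every interior facet and the homogeneous boundary condition together give $\facetjump{\VelVarEx^t}=0$, so the two symmetry/penalty terms of \eqref{eq:blfA} that contain $\facetproj^{k-1}\facetjump{\VelVarEx^t}$ vanish. Elementwise integration by parts on the remaining volume integral then yields
\[
\bilinearform{A}(\VelVarEx,\VelTest) = \int_\Omega -\nu\Delta\VelVarEx \cdot \HdivTest\,dx + R, \quad R := \sumoverallelements\int_{\partial T}\nu\,\tfrac{\partial \VelVarEx}{\partial n}\cdot \bigl(\HdivTestT - \facetproj^{k-1}\facetjump{\VelTest^t}\bigr)\,ds.
\]
Expanding $\facetjump{\VelTest^t}=\HdivTestT^t-\FacetTest$ and splitting $\HdivTestT=\HdivTestT^n+\HdivTestT^t$ separates $R$ into a tangential part $R_t = \sumoverallelements\int_{\partial T}\nu\,\tfrac{\partial \VelVarEx}{\partial n}\cdot(\id-\facetproj^{k-1})\HdivTestT^t\,ds$, a normal part $R_n = \sumoverallelements\int_{\partial T}\nu\,\tfrac{\partial \VelVarEx}{\partial n}\cdot \HdivTestT^n\,ds$, and a facet-test part $\sumoverallelements\int_{\partial T}\nu\,\tfrac{\partial \VelVarEx}{\partial n}\cdot\facetproj^{k-1}\FacetTest\,ds$.

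The facet-test part cancels: on every interior facet $\FacetTest$ is single-valued while $\nu\tfrac{\partial\VelVarEx}{\partial n}$ flips sign with the orientation of $n$ (since $u$ is smooth across the facet), so the contributions from the two adjacent elements annihilate each other; on every exterior facet $\FacetTest=0$ by definition of $\FacetSpace$. For $R_t$, the $L^2(F)$-self-adjointness and idempotency of $\id-\facetproj^{k-1}$ together with the tangentiality of $\HdivTestT^t$ rewrite it as
\[
R_t = \sumoverallelements\int_{\partial T}(\id-\facetproj^{k-1})\bigl(\nu\tfrac{\partial\VelVarEx}{\partial n}\bigr)^t \cdot (\id-\facetproj^{k-1})\HdivTestT^t\,ds.
\]
For $R_n$ I reuse the final step of the proof of Lemma~\ref{lem:consistency}: the scalar $\nu\,\tfrac{\partial \VelVarEx}{\partial n}\cdot n$ is single-valued across each interior facet, $\facetproj^{k-1}\jumpDG{\HdivTest\cdot n}=0$ on interior facets by definition of $\HdivSpaceHODC$, and $\facetproj^{k-1}(\HdivTest\cdot n)=0$ on exterior facets because $\jumpDG{\cdot}$ acts as the identity there. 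These three facts force the $\facetproj^{k-1}$-part of $R_n$ to vanish and leave
\[
R_n = \sumoverallelements\int_{\partial T}(\id-\facetproj^{k-1})\bigl(\nu\,\tfrac{\partial \VelVarEx}{\partial n}\cdot n\bigr)\,(\id-\facetproj^{k-1})(\HdivTestT\cdot n)\,ds.
\]

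Since $\id-\facetproj^{k-1}$ respects the pointwise normal/tangential decomposition on each facet, adding $R_t$ and $R_n$ recombines into a single dot product on the full vector $\HdivTestT$, giving $R_t+R_n = -\ConDiff_{\VelVarEx}(\VelVarEx,\VelTest)$, which together with the volume term is exactly the claimed identity. The only step which is substantively different from the classical $H(\divergence)$-conforming analysis is the treatment of $R_n$: in the strictly $H(\divergence)$-conforming setting this term would vanish outright, and it is precisely the relaxed normal continuity of $\HdivSpaceHODC$ together with the $\facetproj^{k-1}$-only orthogonality of $\HdivTest\cdot n$ on exterior facets which produces a nonzero but higher-order consistency residual. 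Everything else is essentially bookkeeping inherited from Lemma~\ref{lem:consistency}.
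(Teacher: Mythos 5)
Your argument is correct and is essentially the paper's own proof: the paper simply states that Lemma~\ref{lem:velconsistency} ``follows similar lines'' as Lemma~\ref{lem:consistency}, and your write-up is exactly that specialization of the consistency argument to $\bilinearform{A}$ alone, using $\facetjump{\VelVarEx^t}=0$, elementwise integration by parts, single-valuedness of the viscous flux on interior facets, the relaxed normal continuity of $\HdivSpaceHODC$ (with the identity-jump convention on exterior facets), and the self-adjointness of $\facetproj^{k-1}$ to insert the second $(\id-\facetproj^{k-1})$. The recombination of the tangential and normal residuals into a single term with $(\id-\facetproj^{k-1})\HdivTestT$ is valid on the straight facets assumed in the paper, so no gap remains.
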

\vspace*{-0.25cm}
\begin{proof}
This follows similar lines as the proof of Lemma \ref{lem:consistency}.
\end{proof}
\begin{theorem}{} \label{theom:errestprobust}
  Let $\VelVarEx \in  [H_0^1(\Omega) \cap H^l(\mesh)]^d,~l\geq 2$ be the velocity solution of~\eqref{eq:stokes} and $\VelVar \in \VelSpace$ be the velocity solution of \eqref{eq:probuststokes} where $\Recon$ fulfills Assumption \ref{ass:recon}.
Then there holds $\ReconHdiv \HdivVar \in H(\divergence,\Omega)$, $\divergence(\ReconHdiv \HdivVar) = 0$ and for $m = \min\{l-1, k\}$ we have
\begin{subequations}
  \begin{align}
    \brokenHnormleft \VelVarEx - \VelVar  \brokenHnormright  & \le  h^m \|  \VelVarEx \|_{H^{m+1}(\mesh)}, \label{eq:presta}\\
    \brokenHnormleft \VelVarEx - \Recon \VelVar  \brokenHnormright  & \le  h^m \|  \VelVarEx \|_{H^{m+1}(\mesh)}. \label{eq:prestb}
  \end{align}
\end{subequations}
\end{theorem}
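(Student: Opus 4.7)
}
The assertion $\ReconHdiv \HdivVar \in H(\divergence,\Omega)$ together with $\divergence(\ReconHdiv \HdivVar)=0$ is immediate from Lemma~\ref{lem:recon}: the second equation of~\eqref{eq:probuststokes} combined with~\eqref{eq:stronginc2} yields $\divergence(\HdivVarT)=0$ on every $T\in\mesh$, so that Lemma~\ref{lem:recon} applies. Thus the only work is in proving the velocity error bounds \eqref{eq:presta} and \eqref{eq:prestb}. Our strategy for \eqref{eq:presta} is a Strang-type argument performed in the discretely divergence-free subspace $Z_h := \{ \VelTest \in \VelSpace : \bilinearform{B}(\VelTest,\PressureTest)=0 \ \forall \PressureTest \in \PressureSpace\}$, where the pressure-dependent terms that appeared in the consistency error of Lemma~\ref{lem:consistency} will disappear thanks to the reconstruction $\Recon$.

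We pick an interpolant $\Pi_h \VelVarEx$ of $\VelVarEx$ in $\VelSpace$ that preserves exact divergence-freeness (for instance the BDM-type interpolant, which satisfies the commuting diagram with $\divergence$); then $\Pi_h \VelVarEx \in Z_h$ because $\divergence(\VelVarEx)=0$. Setting $e_h := \VelVar - \Pi_h \VelVarEx \in Z_h$, coercivity (Lemma~\ref{lem:coercivity}) gives
\begin{equation*}
\nu \brokenHnorm{e_h}^2 \lesssim \bilinearform{A}(e_h,e_h) = \bilinearform{A}(\VelVar - \VelVarEx, e_h) + \bilinearform{A}(\VelVarEx - \Pi_h \VelVarEx, e_h).
\end{equation*}
The second term is bounded by continuity (Lemma~\ref{lem:continuity}) and the interpolation estimate (Lemma~\ref{lem:interpol}), contributing $\sqrt{\nu}\, h^m \|\VelVarEx\|_{H^{m+1}(\mesh)} \cdot \sqrt{\nu}\, \brokenHnorm{e_h}$. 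The first term is handled using $e_h \in Z_h$: from~\eqref{eq:probuststokes} we obtain $\bilinearform{A}(\VelVar,e_h) = \ForceVar(\Recon e_h) = \int_\Omega \ForceVar \cdot \ReconHdiv(e_{\mathcal T}) \, dx$. Since $\ReconHdiv(e_{\mathcal T}) \in H(\divergence,\Omega)$ is pointwise divergence-free and has vanishing normal trace on $\partial \Omega$ (by Lemma~\ref{lem:recon} and the boundary conditions), an integration by parts removes the pressure: substituting $\ForceVar = -\nu \Delta \VelVarEx + \nabla \PressureVarEx$ yields $\ForceVar(\Recon e_h) = \int_\Omega (-\nu \Delta \VelVarEx) \cdot \ReconHdiv(e_{\mathcal T})\,dx$.

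Splitting $\ReconHdiv(e_{\mathcal T}) = e_{\mathcal T} + (\ReconHdiv(e_{\mathcal T}) - e_{\mathcal T})$ and applying Lemma~\ref{lem:velconsistency} to the first summand gives
\begin{equation*}
\bilinearform{A}(\VelVar - \VelVarEx, e_h) = \ConDiff_{\VelVarEx}(\VelVarEx, e_h) + \int_\Omega (-\nu \Delta \VelVarEx) \cdot (\ReconHdiv(e_{\mathcal T}) - e_{\mathcal T}) \, dx.
\end{equation*}
The consistency residual $\ConDiff_{\VelVarEx}(\VelVarEx,e_h)$ is estimated exactly as in Lemma~\ref{lem:consistency} and produces the bound $h^m \sqrt{\nu}\|\VelVarEx\|_{H^{m+1}(\mesh)}\cdot \sqrt{\nu}\brokenHnorm{e_h}$, crucially with no pressure contribution. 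The main obstacle is the remaining integral $I := \int_\Omega (-\nu \Delta \VelVarEx) \cdot (\ReconHdiv e_{\mathcal T} - e_{\mathcal T}) \, dx$: here the key input is Lemma~\ref{lem:facetortho}, which tells us that $\ReconHdiv e_{\mathcal T} - e_{\mathcal T} \in \HdivSymb^\ast_h$ is $L^2$-orthogonal to $[\mathcal{P}^{k-2}(T)]^d$ on every element. Thus one may subtract $\Pi^{k-2}(-\nu \Delta \VelVarEx)$ for free, and Bramble--Hilbert on each $T$ together with a scaling argument on the finite-dimensional local space $\HdivSymb^\ast_T$ (delivering $\|\ReconHdiv e_{\mathcal T} - e_{\mathcal T}\|_T \lesssim h\, \localnorm{e_h}$ in the spirit of the stability proof of \eqref{eq:ass3}) combine to give $|I| \lesssim \nu\, h^m \|\VelVarEx\|_{H^{m+1}(\mesh)}\, \brokenHnorm{e_h}$.

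Collecting the estimates, dividing by $\sqrt{\nu}\brokenHnorm{e_h}$ and applying the triangle inequality with Lemma~\ref{lem:interpol} proves \eqref{eq:presta}. Assertion \eqref{eq:prestb} then follows from \eqref{eq:presta}, the stability property \eqref{eq:ass3}, and Lemma~\ref{lem:interpol} by exactly the argument of Lemma~\ref{lem:reconapprox}.
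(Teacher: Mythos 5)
Your proposal is correct and follows essentially the same route as the paper's proof: coercivity applied to the error in the discretely divergence-free subspace, elimination of the pressure via the exact solenoidality (and vanishing normal trace) of the reconstructed test function, Lemma \ref{lem:velconsistency} for the consistency residual, and the orthogonality of $\ReconHdiv w_{\mathcal{T}} - w_{\mathcal{T}}$ to element-wise $[\mathcal{P}^{k-2}(T)]^d$ combined with Bramble--Hilbert and a scaling argument, with \eqref{eq:prestb} obtained exactly as in Lemma \ref{lem:reconapprox}. The only (harmless) deviations are that you fix a divergence-preserving BDM-type interpolant where the paper takes the infimum over the discretely divergence-free subspace and invokes Remark 4.10 of \cite{braess}, and that the orthogonality you need should be credited to assumption \eqref{eq:ass2b} (valid for any $\Recon$ satisfying Assumption \ref{ass:recon}) rather than to Lemma \ref{lem:facetortho}, which concerns only the specific averaging-based reconstruction of section \ref{hdgnse:stokes:reconstruction:average}.
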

\vspace*{-0.25cm}
\begin{proof}
  We start with \eqref{eq:presta}.
  For an arbitrary $v_h \in \VelSpace^0:=\{ v_h \in U_h: \bilinearform{B}(v_h,q_h)=0 ~ \forall q_h \in \PressureSpace \}$ we have
  $\brokenHnorm{u - u_h} \leq     \brokenHnorm{u_h - v_h} + 
  \brokenHnorm{u - v_h}$. Using Lemma \ref{lem:coercivity} and \ref{lem:continuity} and $w_h:=u_h-v_h$ leads to
  \begin{subequations}
    \begin{align*}
       \nu \brokenHnorm{w_h}^2
      & \lesssim \bilinearform{A}(u_h-v_h,u_h-v_h) 
       = \bilinearform{A}(u-v_h,w_h) + \bilinearform{A}(u_h-u,w_h) \\
      & \lesssim \nu \brokenHnormII{u - v_h} \brokenHnorm{w_h} + \bilinearform{A}(u_h,w_h) - \bilinearform{A}(u,w_h) \\
      & \lesssim \nu \brokenHnormII{u - v_h} \brokenHnorm{w_h} + f(\Recon w_h )-\bilinearform{B}(w_h,p_h) - \bilinearform{A}(u,w_h).
    \end{align*}
    As $\Recon w_h = \Recon{u_h} + \Recon{v_h}$ is exactly divergence free we have $f(\Recon w_h ) = (-\Delta u, \ReconHdiv w_{\mathcal{T}} )$ and further $w_{\mathcal{T}}$ is element-wise divergence free so $\bilinearform{B}(w_h,p_h) = 0$. Using Lemma \ref{lem:velconsistency} for the last term and
   property \eqref{eq:ass2b} we have for all $\delta_h \in [\mathcal{P}^{k-2}(T)]^d$ 
    \begin{align*}
      \nu \brokenHnorm{w_h}^2& \lesssim \brokenHnormII{u - v_h} \brokenHnorm{w_h} + (- \nu\Delta u , \ReconHdiv w_{\mathcal{T}} - w_{\mathcal{T}})_\Omega +  \ConDiff_{\VelVarEx}(\VelVarEx, w_h)  \\
      & \lesssim           \nu \brokenHnormII{u - v_h} \brokenHnorm{w_h} + \nu \Vert \Delta u- \delta_h \Vert_{L^2(\Omega)} \ \Vert \ReconHdiv w_{\mathcal{T}} - w_{\mathcal{T}} \Vert_{L^2(\Omega)} +  \ConDiff_{\VelVarEx}(\VelVarEx, w_h).\\
      \intertext{Next note that the operator $ (\id - \ReconHdiv)$ is orthogonal on constants so by using the Bramble-Hilbert Lemma, choosing $\delta_h = \Pi^{k-2} (\Delta u)$ and a scaling argument it follows}    
      \nu \brokenHnorm{w_h}^2& \lesssim           \nu \brokenHnormII{u - v_h} \brokenHnorm{w_h} +  \nu h^{m-1} \Vert u  \Vert_{H^{m+1}(\mesh)} \ h \brokenHnorm{w_h} + \nu h^{m} \Vert u  \Vert_{H^{m+1}(\mesh)} \brokenHnormleft w_h \brokenHnormright
    \end{align*}
    Dividing by $\brokenHnormleft w_h \brokenHnormright$ finally leads to
    \begin{align*}
      \brokenHnorm{u - u_h} \lesssim  \inf\limits_{v_h \in U_h^0} \brokenHnormII{u - v_h} + h^{m} \Vert u  \Vert_{H^{m+1}(\mesh)}.
    \end{align*}
    The first term is then estimated by the infimum over the set of exact divergence free, thus normal continuous functions $\{v_h \in W_h \times F_h: \bilinearform{B}(v_h,q_h)=0 ~ \forall q_h \in \PressureSpace\}$, and by using remark 4.10 from \cite{braess} and the Bramble-Hilbert Lemma we conclude the proof of \eqref{eq:presta}.
  \end{subequations}
  Finally, \eqref{eq:prestb} follows from $\brokenHnorm{\cdot} \leq \brokenHnormII{\cdot}$,  Lemma \ref{lem:reconapprox} and \eqref{eq:presta}.
\end{proof}


\section{Numerical examples} \label{sec:numex}
In this section we investigate the numerical solutions of the Stokes problem on $\Omega = [0,1]^d$ and a right hand side $f := -\nu \Delta u - \nabla p$ with the exact solution
\begin{align*}
&u = \curl \zeta \quad \text{with} \quad \zeta:=x^2(x-1)^2y^2(y-1)^2 \quad \text{and} \quad p:= x^5+y^5 - \frac{1}{3} \quad \text{for} \quad d = 2, \\
&u = \curl (\zeta,\zeta,\zeta) \quad \text{with} \quad \zeta:=x^2(x-1)^2y^2(y-1)^2z^2(z-1)^2 \\
  &~~~~~~~~~~~~~~~~~~~~~~~~~~~~~~~~~~~~~~~~~~~~~~~~~~~~~~~\text{and} \quad p:= x^5+y^5+z^5 - \frac{1}{2} \quad \text{for} \quad d = 3,
\end{align*}
and different values of $\nu$. All implementations of the numerical examples where performed within the finite element library NGSolve/Netgen, see \cite{netgen,schoeberl2014cpp11}.
\subsection{Pressure robust error estimates}  \label{sec:numex1}
As addressed in section \ref{hdgnse:stokes:pressurerobust} we verify the impact of pressure robustness of the proposed methods. For this we compare different polynomial orders and distinguish between the solution $\VelVar^{\text{\tiny B}}$ of problem \eqref{eq:discstokes}, the pressure-robust solution $\VelVar^{\text{\tiny PR}}$ of problem \eqref{eq:probuststokes} and their exact divergence free reconstructions $\Recon  \VelVar^{\text{\tiny B}}$ and $\Recon  \VelVar^{\text{\tiny PR}}$. We further choose a fixed viscosity $\nu=1e^{-3}$.
Figure \ref{fig:twodexampleplot}
shows the $L^2$ error and the error of the gradient for the two dimensional case.  Both methods \eqref{eq:discstokes} and \eqref{eq:probuststokes} lead to optimal convergence orders as expected by the theory see Proposition \ref{prop:errest} and Theorem \ref{theom:errestprobust} but the order of magnitude is much larger for the solution of problem \eqref{eq:discstokes}. Note that the scaling between the respective values is always approximately $1/\nu$. Furthermore we have a proper convergence order of the errors with respect to the exactly divergence free reconstructed solutions $\ReconHdiv  \HdivVar^{\text{\tiny B}}$ and $\ReconHdiv  \HdivVar^{\text{\tiny PR}}$ as expected by Theorem \ref{theom:errestprobust}. Where the plots of the $L^2$ error of the reconstructed solutions and their discrete divergence free counterparts nearly can't be distinguished when the polynomial order is increased we can see a very small difference in broken $H^1$ (semi) norm. Similar conclusions can be done for the three dimensional case, see Figure \ref{fig:twodexampleplot}.
To clarify the consequences of pressure-robustness, Figure \ref{fig:probustconvplot} shows the broken $H^1$ (semi) norm error for different values of $\nu = 10^j$ with $j = -6 \dots 2$. For $\nu \ge 1$ the irrotational part of the right hand side is of the same magnitude as the divergence free part, thus the errors $|| \nabla \VelVarEx - \nabla \HdivVar^{\text{\tiny B}}  ||_0$ and $|| \nabla \VelVarEx - \nabla\HdivVar^{\text{\tiny PR}} ||_0$ are in a close range. When $\nu$ gets smaller the irrotaional part of $f$ tends to dominate resulting in a bad velocity approximation, thus a big error $|| \nabla \VelVarEx - \nabla \HdivVar^{\text{\tiny B}}  ||_0$. In contrast, the pressure-robust solution is not affected by the decreasing viscosity and the error $|| \nabla \VelVarEx - \nabla\HdivVar^{\text{\tiny PR}} ||_0$ stays constant.

\begin{figure}[h]
  \begin{center}
    \input{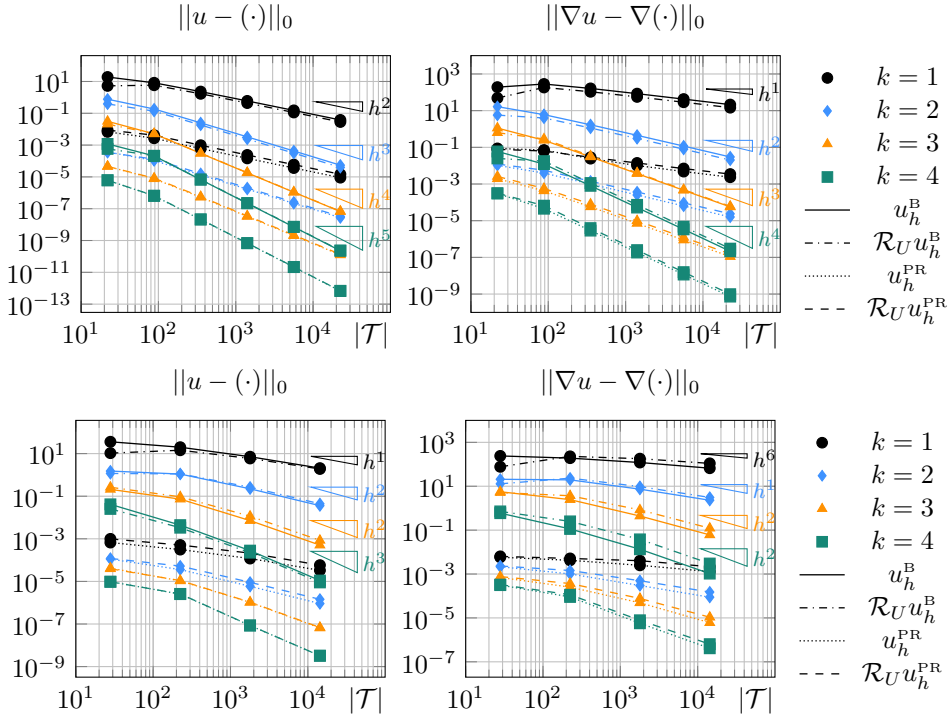}
    \pgfplotstableread{plots/probust/3dexample/p1_norec_nosolrec.out} \onenorecnosolrec
\pgfplotstableread{plots/probust/3dexample/p1_norec_solrec.out} \onenorecsolrec
\pgfplotstableread{plots/probust/3dexample/p1_rec_nosolrec.out} \onerecnosolrec
\pgfplotstableread{plots/probust/3dexample/p1_rec_solrec.out} \onerecsolrec
\pgfplotstableread{plots/probust/3dexample/p2_norec_nosolrec.out} \twonorecnosolrec
\pgfplotstableread{plots/probust/3dexample/p2_norec_solrec.out} \twonorecsolrec
\pgfplotstableread{plots/probust/3dexample/p2_rec_nosolrec.out} \tworecnosolrec
\pgfplotstableread{plots/probust/3dexample/p2_rec_solrec.out} \tworecsolrec
\pgfplotstableread{plots/probust/3dexample/p3_norec_nosolrec.out} \threenorecnosolrec
\pgfplotstableread{plots/probust/3dexample/p3_norec_solrec.out} \threenorecsolrec
\pgfplotstableread{plots/probust/3dexample/p3_rec_nosolrec.out} \threerecnosolrec
\pgfplotstableread{plots/probust/3dexample/p3_rec_solrec.out} \threerecsolrec
\pgfplotstableread{plots/probust/3dexample/p4_norec_nosolrec.out} \fournorecnosolrec
\pgfplotstableread{plots/probust/3dexample/p4_norec_solrec.out} \fournorecsolrec
\pgfplotstableread{plots/probust/3dexample/p4_rec_nosolrec.out} \fourrecnosolrec
\pgfplotstableread{plots/probust/3dexample/p4_rec_solrec.out} \fourrecsolrec

\definecolor{myblue}{RGB}{62,146,255}
\definecolor{mygreen}{RGB}{22,135,118}
\definecolor{myred}{RGB}{255,145,0}

\begin{tikzpicture}
  [
  scale=1
  ]
  \begin{axis}[
    name=plot1,
    scale=0.6,
    title = $|| u - (\cdot ) ||_0$,
    legend columns=4,
    legend style={text height=0.7em },
    legend style={draw=none},
    style={column sep=0.1cm},
    xlabel=$|\mathcal{T}|$,
    x label style={at={(0.85,0.065)},anchor=west},
    xmode=log,
    ymode=log,
    ytick = {1e-13,1e-11,1e-9,1e-7,1e-5,1e-3,1e-1,1e1},
    y tick label style={
      /pgf/number format/.cd,
      fixed,
      precision=2
    },
    xtick = {1e1,1e2,1e3,1e4},
    x tick label style={
      /pgf/number format/.cd,
      fixed,
      precision=2
    },
    %
    xmin = 1e1,
    xmax = 1e5,
    grid=both,
    legend style={
      cells={align=left},
      at={(0.3,-0.25)},
      anchor = north west
    },
    ]

    \addlegendimage{only marks, mark=*,black}
    \addlegendimage{only marks, mark=diamond*,myblue}
    \addlegendimage{only marks, mark=triangle*,myred}
    \addlegendimage{only marks, mark=square*,mygreen}
    \addlegendimage{line width=0.5pt}
    \addlegendimage{dashdotted,line width=0.5pt}
    \addlegendimage{densely dotted,line width=0.5pt}
    \addlegendimage{dashed,line width=0.5pt}

    
    \addplot[line width=0.5pt, color=black, mark=*] table[x=0, y=2]{\onenorecnosolrec};
    \addplot[line width=0.5pt, dashdotted, color=black, mark=*, mark options ={solid}] table[x=0, y=2]{\onenorecsolrec};
    \addplot[line width=0.5pt, densely dotted, color=black, mark=*, mark options ={solid}] table[x=0, y=2]{\onerecnosolrec};
    \addplot[line width=0.5pt, dashed, color=black, mark=*, mark options ={solid}] table[x=0, y=2]{\onerecsolrec};

    \addplot[line width=0.5pt, color=myblue, mark=diamond*] table[x=0, y=2]{\twonorecnosolrec};
    \addplot[line width=0.5pt, dashdotted, color=myblue, mark=diamond*, mark options ={solid}] table[x=0, y=2]{\twonorecsolrec};
    \addplot[line width=0.5pt, densely dotted, color=myblue, mark=diamond*, mark options ={solid}] table[x=0, y=2]{\tworecnosolrec};
    \addplot[line width=0.5pt, dashed, color=myblue, mark=diamond*, mark options ={solid}] table[x=0, y=2]{\tworecsolrec};

    \addplot[line width=0.5pt, color=myred, mark=triangle*] table[x=0, y=2]{\threenorecnosolrec};
    \addplot[line width=0.5pt, dashdotted, color=myred, mark=triangle*, mark options ={solid}] table[x=0, y=2]{\threenorecsolrec};
    \addplot[line width=0.5pt, densely dotted, color=myred, mark=triangle*, mark options ={solid}] table[x=0, y=2]{\threerecnosolrec};
    \addplot[line width=0.5pt, dashed, color=myred, mark=triangle*, mark options ={solid}] table[x=0, y=2]{\threerecsolrec};

    \addplot[line width=0.5pt, color=mygreen, mark=square*] table[x=0, y=2]{\fournorecnosolrec};
    \addplot[line width=0.5pt, dashdotted, color=mygreen, mark=square*, mark options ={solid}] table[x=0, y=2]{\fournorecsolrec};
    \addplot[line width=0.5pt, densely dotted, color=mygreen, mark=square*, mark options ={solid}] table[x=0, y=2]{\fourrecnosolrec};
    \addplot[line width=0.5pt, dashed, color=mygreen, mark=square*, mark options ={solid}] table[x=0, y=2]{\fourrecsolrec};

    \logLogSlopeTriangle{0.91}{0.15}{0.86}{0.66666666}{black};
    \logLogSlopeTriangle{0.91}{0.15}{0.74}{1}{myblue};
    \logLogSlopeTriangle{0.91}{0.15}{0.61}{1.33333333}{myred};
    \logLogSlopeTriangle{0.91}{0.15}{0.49}{1.66666666}{mygreen};

  \end{axis}
   \begin{axis}[
     name=plot2,
     at=(plot1.right of south east),
     anchor=left of south west,
     title = $|| \nabla u - \nabla (\cdot) ||_0$,
     legend entries={$k=1$, $k=2$, $k=3$, $k=4$, $\VelVar^{\text{\tiny B}}$, $\Recon \VelVar^{\text{\tiny B}}$, $ \VelVar^{\text{\tiny PR}}$, $\Recon  \VelVar^{\text{\tiny PR}}$  },                             
    scale=0.6,
    legend style={text height=0.7em },
    legend style={draw=none},
    style={column sep=0.15cm},
    xlabel=$|\mathcal{T}|$,
    x label style={at={(0.85,0.065)},anchor=west},
    xmode=log,
    ymax=1e4,
    ymin=2e-8,
    ymode=log,
    ytick = {1e-7,1e-5,1e-3,1e-1,1e1,1e3},
    y tick label style={
      /pgf/number format/.cd,
      fixed,
      precision=2
    },
    xtick = {1e1,1e2,1e3,1e4},
    x tick label style={
      /pgf/number format/.cd,
      fixed,
      precision=2
    },
    %
    xmin = 1e1,
    xmax = 1e5,
    grid=both,
    legend style={
      cells={align=left},
      at={(1.05,1)},
      anchor = north west
    },
    ]
    \addlegendimage{only marks, mark=*,black}
    \addlegendimage{only marks, mark=diamond*,myblue}
    \addlegendimage{only marks, mark=triangle*,myred}
    \addlegendimage{only marks, mark=square*,mygreen}
    \addlegendimage{line width=0.5pt}
    \addlegendimage{dashdotted,line width=0.5pt}
    \addlegendimage{densely dotted,line width=0.5pt}
    \addlegendimage{dashed,line width=0.5pt}
    
    \addplot[line width=0.5pt, color=black, mark=*] table[x=0, y=3]{\onenorecnosolrec};
    \addplot[line width=0.5pt, dashdotted, color=black, mark=*, mark options ={solid}] table[x=0, y=3]{\onenorecsolrec};
    \addplot[line width=0.5pt, densely dotted, color=black, mark=*, mark options ={solid}] table[x=0, y=3]{\onerecnosolrec};
    \addplot[line width=0.5pt, dashed, color=black, mark=*, mark options ={solid}] table[x=0, y=3]{\onerecsolrec};
                        
    \addplot[line width=0.5pt, color=myblue, mark=diamond*] table[x=0, y=3]{\twonorecnosolrec};
    \addplot[line width=0.5pt, dashdotted, color=myblue, mark=diamond*, mark options ={solid}] table[x=0, y=3]{\twonorecsolrec};
    \addplot[line width=0.5pt, densely dotted, color=myblue, mark=diamond*, mark options ={solid}] table[x=0, y=3]{\tworecnosolrec};
    \addplot[line width=0.5pt, dashed, color=myblue, mark=diamond*, mark options ={solid}] table[x=0, y=3]{\tworecsolrec};
                        
    \addplot[line width=0.5pt, color=myred, mark=triangle*] table[x=0, y=3]{\threenorecnosolrec};
    \addplot[line width=0.5pt, dashdotted, color=myred, mark=triangle*, mark options ={solid}] table[x=0, y=3]{\threenorecsolrec};
    \addplot[line width=0.5pt, densely dotted, color=myred, mark=triangle*, mark options ={solid}] table[x=0, y=3]{\threerecnosolrec};
    \addplot[line width=0.5pt, dashed, color=myred, mark=triangle*, mark options ={solid}] table[x=0, y=3]{\threerecsolrec};
                        
    \addplot[line width=0.5pt, color=mygreen, mark=square*] table[x=0, y=3]{\fournorecnosolrec};
    \addplot[line width=0.5pt, dashdotted, color=mygreen, mark=square*, mark options ={solid}] table[x=0, y=3]{\fournorecsolrec};
    \addplot[line width=0.5pt, densely dotted, color=mygreen, mark=square*, mark options ={solid}] table[x=0, y=3]{\fourrecnosolrec};
    \addplot[line width=0.5pt, dashed, color=mygreen, mark=square*, mark options ={solid}] table[x=0, y=3]{\fourrecsolrec};
    

    \logLogSlopeTriangle{0.91}{0.15}{0.87}{0.3333333}{black};
    \logLogSlopeTriangle{0.91}{0.15}{0.75}{0.6666666}{myblue};
    \logLogSlopeTriangle{0.91}{0.15}{0.63}{1}{myred};
    \logLogSlopeTriangle{0.91}{0.15}{0.5}{1.3333333}{mygreen};    
  \end{axis}
\end{tikzpicture}

  \end{center}
  \vspace*{-0.35cm}
  \caption{$L^2$ norm and the $H^1$ (semi) norm error for $d=2$ (top) and $d=3$ (bottom).}   \vspace*{-0.05cm}
\label{fig:twodexampleplot}
\end{figure}


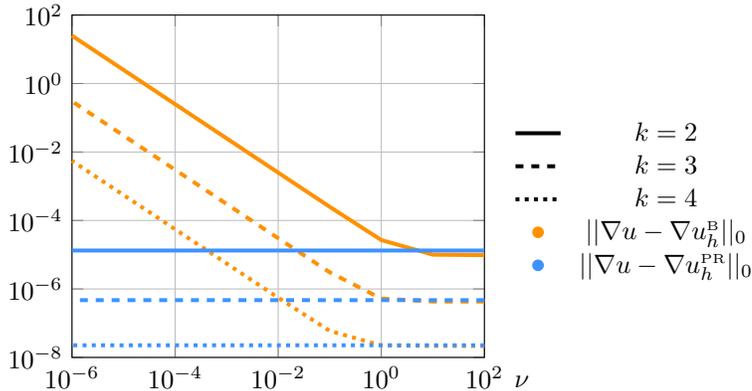
\begin{figure}[h]
  \begin{center}



\pgfplotstableread{plots/probust/varnu_2d_2hodc.out} \varnutwohodc
\pgfplotstableread{plots/probust/varnu_2d_2hodc_rec.out} \varnutwohodcrec

\pgfplotstableread{plots/probust/varnu_2d_3hodc.out} \varnuthreehodc
\pgfplotstableread{plots/probust/varnu_2d_3hodc_rec.out} \varnuthreehodcrec

\pgfplotstableread{plots/probust/varnu_2d_4hodc.out} \varnufourhodc
\pgfplotstableread{plots/probust/varnu_2d_4hodc_rec.out} \varnufourhodcrec

\definecolor{myblue}{RGB}{62,146,255}
\definecolor{mygreen}{RGB}{22,135,118}
\definecolor{myred}{RGB}{255,145,0}

\begin{tikzpicture}
  [
  scale=1
  ]
  \begin{axis}[
    name=plot1,
    legend entries={$k=2$, $k=3$, $k=4$, $|| \nabla \VelVarEx - \nabla \VelVar^{\text{\tiny B}}  ||_0$, $|| \nabla \VelVarEx - \nabla \VelVar^{\text{\tiny PR}}  ||_0$},                             
    scale=0.8,
    legend style={text height=0.7em },
    legend style={draw=none},
    style={column sep=0.15cm},
    xlabel=$\nu$,
    x label style={at={(1.05,0.045)},anchor=west},
    xmode=log,
    ymax=1e2,
    ymin=1e-8,
    ymode=log,
    ytick = {1e-8, 1e-6, 1e-4 , 1e-2, 1e0, 1e2},
    y tick label style={
      /pgf/number format/.cd,
      fixed,
      precision=2
    },
    xtick = {1e-6, 1e-4, 1e-2 , 1e0, 1e2},
    x tick label style={
      /pgf/number format/.cd,
      fixed,
      precision=2
    },
    %
    xmin = 1e-6,
    xmax = 1e2,
    xtick={1e-6,1e-4,1e-2,1e-0,1e2},
    grid=both,
    legend style={
      cells={align=left},
      at={(1.05,0.72)},
      anchor = north west
    },
    ]
    \addlegendimage{line width=1.5pt}
    \addlegendimage{line width=1.5pt,dashed}
    \addlegendimage{line width=1.5pt,dotted} 
    
    \addlegendimage{only marks, mark=*,myred}
    \addlegendimage{only marks, mark=*,myblue}
    
        

    \addplot[line width=1.5pt, color=myred] table[x=0, y=2]{\varnutwohodc};
    \addplot[line width=1.5pt,dashed, color=myred] table[x=0, y=2]{\varnuthreehodc};
    \addplot[line width=1.5pt, dotted, color=myred] table[x=0, y=2]{\varnufourhodc};
    

    \addplot[line width=1.5pt,  color=myblue] table[x=0, y=2]{\varnutwohodcrec};
    \addplot[line width=1.5pt,dashed, color=myblue] table[x=0, y=2]{\varnuthreehodcrec};
    \addplot[line width=1.5pt,dotted, color=myblue] table[x=0, y=2]{\varnufourhodcrec};
    
    
  \end{axis}

\end{tikzpicture}

  \end{center}
  \vspace*{-0.35cm}
  \caption{The broken $H^1$ (semi) norm errors for varying $\nu$ and different polynomial orders $k$.} \vspace*{-0.05cm}
\label{fig:probustconvplot}
\end{figure}

\subsection{Timings}
We discuss the computational benefits of the method on an example.
To this end we compare three methods:
First, the $H(\divergence)$-conforming HDG method introduced in \cite{lehrenfeld2010hybrid} with unknowns of order $k$ involved in the normal-continuity and the (weak) tangential continuity through facet unknowns (method \kk). Secondly, the $H(\divergence)$-conforming HDG method with reduced stabilization (projected jump) where (only) the tangential facet unknowns are reduced to order $k-1$, as presented in \cite{LS_CMAME_2016} (method \kkmo). Thirdly, the method that we introduced in this contribution with a relaxed $H(\divergence)$-conformity so that the only globally coupled velocity unknowns are facet unknowns of order $k-1$ (method \kmokmo).
Note that the costs for the reconstruction steps for solenoidal solutions and pressure robustness are negligible and are not discussed here. For a comparison between the computational effort of an $H(\divergence)$-conforming DG and a corresponding HDG method we refer to \cite{LS_CMAME_2016}.

The following measures have been taken to compare the methods' computational effort.
We consider the number of (velocity and pressure) unknowns that appear in the three methods (\texttt{dofs}). Note that, in order to simplify the presentation, we do not make use of a possible a priori reduction of the velocity and pressure spaces, cf. Remark \ref{rem:hodivfree}.
Prior to solving the linear systems we apply static condensation, i.e. we eliminate all unknowns that have only element-local couplings. The remaing unknowns are denoted as the \emph{globally coupled} degrees of freedom (\texttt{gdofs}) which already give a better indication of the computational effort as these are the only ones for which a large linear system has to be solved.
We note that all pressure unknowns except for one constant per element can be eliminated, i.e. the pressure unknowns in the condensated system are independent of $k$. 
Not only the number of (globally coupled) unknowns is important for the computational effort in linear solvers, but also the sparsity pattern and the number of non-zero entries in the matrix (\texttt{nze}).
Finally, we use a direct factorization method to prepare the solution of linear systems and measure the computation time on a shared-memory machine with $24$ cores for the three methods (\texttt{f.time}). We note that the factorization time is much larger than forward and backward substitution which we neglect in the timings.
As examples we take the finest meshes from the 2D and 3D Stokes problems from the previous section. The results are shown in Table \ref{tab:timings}.

\begin{table}[h]
  \begin{center}
    \footnotesize
    \begin{tabular}{r r@{~}r@{~}r  r@{~}r@{~}r  r@{~}r@{~}r  r@{~}r@{~}r }
      \toprule
      & \multicolumn{3}{c}{$k=1$}
      & \multicolumn{3}{c}{$k=2$}
      & \multicolumn{3}{c}{$k=3$}
      & \multicolumn{3}{c}{$k=4$} \\
 2D   & \kmokmo & \kkmo & \kk 
      & \kmokmo & \kkmo & \kk 
      & \kmokmo & \kkmo & \kk 
      & \kmokmo & \kkmo & \kk \\
      \midrule
      \texttt{dofs}
      & \numQ{248794} &\numQ{215258} & \numQ{181722}
      & \numQ{429530} &\numQ{395994} & \numQ{362458}
      & \numQ{677850} &\numQ{644314} & \numQ{610778}
      & \numQ{993754} &\numQ{960218} & \numQ{926682} \\
      \texttt{gdofs}
      & \numQ{90624} &\numQ{124672} & \numQ{158720}
      & \numQ{158720} &\numQ{192768} & \numQ{226816}
      & \numQ{226816} &\numQ{260864} & \numQ{294912}
      & \numQ{294912} &\numQ{328960} & \numQ{363008}\\
      \texttt{nze}
      & \numQ{969728} & \numQQ{1950976} & \numQQ{3270656} 
      & \numQQ{3270656} & \numQQ{4928768} & \numQQ{6925312} 
      & \numQQ{6925312} & \numQQ{9260288} & \numQQ{11933696} 
      & \numQQ{11933696} & \numQQ{14945536} & \numQQ{18295808} \\
      \texttt{f.time}
      & \numt{0.25324320793151855} &\numt{0.7063219547271729} & \numt{1.2185072898864746}
      & \numt{0.8981707096099854 } &\numt{1.2550511360168457} & \numt{1.8931264877319336}
      & \numt{1.7866754531860352 } &\numt{2.343336582183838 } & \numt{3.0391082763671875}
      & \numt{3.149350881576538  } &\numt{3.930508852005005 } & \numt{4.976104259490967 }\\
      \midrule
      & \multicolumn{3}{c}{$k=1$}
      & \multicolumn{3}{c}{$k=2$}
      & \multicolumn{3}{c}{$k=3$}
      & \multicolumn{3}{c}{$k=4$} \\
 3D   & \kmokmo & \kkmo & \kk 
      & \kmokmo & \kkmo & \kk 
      & \kmokmo & \kkmo & \kk 
      & \kmokmo & \kkmo & \kk \\
      \midrule
      \texttt{dofs}
      & \numQ{461270} &\numQ{406230 } & \numQ{296150}
      & \numQ{941270} &\numQ{858710 } & \numQ{693590}
      & \numQQ{1668438} & \numQQ{1558358} & \numQQ{1338198}
      & \numQQ{2700118} & \numQQ{2562518} & \numQQ{2287318} \\
      \texttt{gdofs}
      &  \numQ{103808} &\numQ{163456} & \numQ{282752}
      & \numQ{282752} &\numQ{372224} & \numQ{551168}
      & \numQ{551168} &\numQ{670464} & \numQ{909056}
      & \numQ{909056} &\numQQ{1058176} & \numQQ{1356416} \\
      \texttt{nze}
      &  \numQQ{2175104} & \numQQ{5634176} & \numQ{17396864}
      &  \numQ{17396864} & \numQ{30457856} & \numQ{67480064}
      &  \numQ{67480064} & \numQ{100235776} &\numQ{185125376}
      & \numQ{185125376} &\numQ{251302016} &\numQ{413933696} \\
      \texttt{f.time}
      & \numt{0.6871836185455322} &\numt{1.2109129428863525} &\numt{4.4992995262146}
      & \numt{4.4400787353515625} &\numt{7.6241114139556885} &\numt{18.996856212615967}
      & \numt{18.757426738739014} &\numt{31.923074960708618} &\numt{73.89999485015869}
      & \numt{73.94802737236023} &\numt{114} &\numt{227} \\
      \bottomrule
    \end{tabular}
    \caption{Measures for the costs of solving linear systems for different $H(\divergence)$-conforming HDG discretization methods for the (2D and 3D) test cases from Section \ref{sec:numex1}.} 
    \label{tab:timings}
  \end{center}
  \vspace*{-0.5cm}
\end{table}

First, we observe that the number of unknowns for the methods with relaxed (weak) continuity are larger due to break up of a globally coupled basis functions into two only locally coupled unknowns. Accordingly the globally coupled unknowns are much less for these methods which also decreases the number of non-zero entries in the Schur complement and thus the computing time for the factorization.

The computational effort spend in the linear systems for \kk~ for some order $k$ is as muss as \kmokmo~ with $k+1$, i.e. we obtain an increase in accuracy (cf. previous section) with the cost of a method of order $k$ only. The benefit is especially strong for low orders $k$.
Let us mention that the time to form the Schur complement and later to reconstruct the interior unknowns is negligible compared to the costs of solving the Schur complement. Further, both operations are element-local and can easily be done in parallel.

There is no consistent DG discretization with $k=0$. However, after static condensation, for \kmokmo~ and $k = 1$, only unknowns of degree $0$ remain for the velocity, i.e. one degree of freedom per facet and dimension. 



\FloatBarrier

\ifarxiv
\appendix
\section*{Appendix}
\subsection{An equivalent formulation to \eqref{eq:discstokes} based on scalar FE spaces} \label{hdgnse:stokes:fesalternative}
We present a different discrete formulation which results in the same discrete solution but uses the much simpler finite element spaces for velocity and pressure:
\begin{equation*}
  V_h =  \prod_{T \in \mesh} [\mathcal{P}^{k}(T)]^d, \quad\Lambda_h = \prod_{F \in \mathcal{F}} [\mathcal{P}^{k-1}(F)]^d  \quad\text{ and }\quad Q_h =  \prod_{T \in \mesh} \mathcal{P}^{k-1}(T).
\end{equation*}
We note that the velocity spaces are product spaces of scalar-valued finite element spaces for element and facet unknowns without any continuity between elements.
The facet space $\Lambda_h$ is responsible for two tasks:
The tangential components of the function in $\Lambda_h$ take the role of the facet variables in $\FacetSpace$ in \eqref{eq:discstokes}. The normal component is a Lagrangian multiplier introduced to enforce the weak $H(\divergence)$-conformity of solutions.

We define the bilinear forms 
\begin{align} \label{eq:blfAt}
    \widetilde{\bilinearform{A}}(\VelVar,\VelTest) :=
    & \displaystyle \sumoverallelements \int_{T} \nu {\nabla} {\HdivVar} \! : \! {\nabla} {\HdivTest} \ d {x} - \int_{\partial T} \nu \frac{\partial {\HdivVar}}{\partial {\normal} }  \facetproj^{k-1} \facetjump{ \VelTest^t } \ d {s} \\
    & \displaystyle- \int_{\partial T} \nu \frac{\partial {\HdivTest}}{\partial {\normal} } \facetproj^{k-1} \facetjump{ \VelVar^t } \ d {s}
      + \int_{\partial T} \nu \frac{\stab k^2}{h} \facetproj^{k-1} \facetjump{ \VelVar^t }  \facetproj^{k-1} \facetjump{ \VelTest^t } \ d {s}, \nonumber
\intertext{for $u_h = (u_T,u_F), v_h = (v_T,v_F) \in V_h \times \Lambda_h$ and for $\VelVar,\VelTest \in V_h \times \Lambda_h, \ \PressureVar \in \Lambda_h$}
\label{eq:blfBt}
      \widetilde{\bilinearform{B}}({\VelVar},(\VelTest,\PressureVar))
      := & \sumoverallelements - \int_{T} \PressureVar
  \divergence({\HdivVar}) \ d {x} + \int_{\partial T} u_T v_F^n \, ds.
\end{align}
Then the alternative discretization takes the form:
Find $ \VelVar\in V_h \times \Lambda_h$, $\PressureVar \in Q_h$, s.t. for all
$\VelTest \in V_h \times \Lambda_h, \PressureTest \in Q_h$ there holds
\begin{equation}\label{eq:discstokes2}
  \widetilde{\bilinearform{A}}(\VelVar,{\VelTest})
  + \widetilde{\bilinearform{B}}({\VelTest},(\VelVar,\PressureVar))
  + \widetilde{\bilinearform{B}}(\VelVar,(\VelTest,\PressureTest))
  =
  \ForceVar( \VelTest ).
  \tag{$\tilde{\text{B}}$}
\end{equation}
The conditions
$\widetilde{\bilinearform{B}}(\VelVar,(0,\PressureTest)) = 0,\ \forall~ \PressureTest \in Q_h$
and $\widetilde{\bilinearform{B}}(\VelVar,((0,v_F^n),0)) = 0,\ \forall~ v_F \in \Lambda_h$
enforce divergence free solutions within every element $T$, $\divergence({u_T})=0$ and a relaxed normal continuity, $\facetproj^{k-1}\jumpDG{u \cdot n}=0$, respectively.
This implies that velocity solutions to \eqref{eq:discstokes2} will be in $\HdivSpaceHODC$. We note that
on the subspace of functions in $\HdivSpaceHODC \subset V_h$ we have that $\widetilde{\bilinearform{A}}(\cdot,\cdot) = \bilinearform{A}(\cdot,\cdot)$
and
$\widetilde{\bilinearform{B}}(\cdot,\cdot) = \bilinearform{B}(\cdot,\cdot)$ so that we obtain the same functions $u_T,~u_F^t$ and $p_h$ as solutions to \eqref{eq:discstokes} and \eqref{eq:discstokes2}.
The additional unknown $u_F^n$ is an approximation to the normal stress $p - \nu n^T\!\cdot\!\nabla u\!\cdot\! n$ as $u_F^n$ is the Lagrangian multiplier for the (weak) normal continuity.

We note that although the solution to the equivalent formulations \eqref{eq:discstokes} and \eqref{eq:discstokes2} are the same, the sparsity pattern is different, cf. \cite[Remark 7]{LS_CMAME_2016}.
Also, we note that while the implementation of the finite element spaces is easier in \eqref{eq:discstokes2}, the implementation of the projection $\facetproj^{k-1}$ is still required, cf. the discussion in the previous subsection.
Furthermore, the usual basis for the simpler space $V_h$ does not provide the simple and fast realization of the reconstruction operator that we discussed in section \ref{hdgnse:stokes:reconstruction:average} so that the reconstruction and the pressure robust variant \eqref{eq:probuststokes} will be more involved and computationally more expensive.

In \cite{rhebergenwells2016} a discretization similar to \eqref{eq:discstokes2} is considered for a Navier-Stokes problem, with the important difference that instead of one facet space $\Lambda_h$, two separate facet spaces $\Lambda_h^u$ and $\Lambda_h^p$ are used where $\Lambda_h^u = \prod_{F \in \mathcal{F}} [\mathcal{P}^{k}(F)]^d$ is the facet space for the momentum conservation in the velocity discretization (and thus no projection is needed) while $\Lambda_h^p = \prod_{F \in \mathcal{F}} \mathcal{P}^{k-1}(F)$ implements the weak $H(\divergence)$-conformity.
In a similar fashion different discretizations can be derived when varying different polynomial degrees in $\Lambda_h^u$, $\Lambda_h^p$ or $\Lambda_h$.
When choosing degree $k$ in $\Lambda_h^p$ and $\Lambda_h^u$ (and removing the projection $\facetproj^{k-1}$) we obtain yet another formulation  with divergence free solutions that has recently been proposed in \cite{rhebergenwells2017}.
\else
\fi

\bibliographystyle{siam}
\bibliography{literature}

\end{document}